\documentclass[11pt,leqno]{article}

\usepackage[T1]{fontenc}
\usepackage{lmodern}
\usepackage[a4paper,textwidth=405bp,textheight=646bp,centering]{geometry}
\usepackage{amsmath,amssymb,amsthm,mathtools,mathrsfs,bm}
\usepackage{enumitem}
\usepackage{aliascnt}
\usepackage{microtype}
\usepackage{xcolor}
\usepackage[numbers,sort&compress]{natbib}
\usepackage[colorlinks=true,linkcolor=blue,citecolor=blue,urlcolor=blue]{hyperref}
\usepackage{indentfirst}
\usepackage[nameinlink,noabbrev]{cleveref}

\allowdisplaybreaks
\numberwithin{equation}{section}

\newtheorem{theorem}{Theorem}[section]

\newaliascnt{proposition}{theorem}
\newtheorem{proposition}[proposition]{Proposition}
\aliascntresetthe{proposition}

\newaliascnt{lemma}{theorem}
\newtheorem{lemma}[lemma]{Lemma}
\aliascntresetthe{lemma}

\newaliascnt{corollary}{theorem}

\aliascntresetthe{corollary}

\newaliascnt{definition}{theorem}
\newtheorem{definition}[definition]{Definition}
\aliascntresetthe{definition}

\newaliascnt{assumption}{theorem}
\newtheorem{assumption}[assumption]{Assumption}
\aliascntresetthe{assumption}

\newaliascnt{problem}{theorem}

\aliascntresetthe{problem}

\theoremstyle{remark}
\newaliascnt{remark}{theorem}
\newtheorem{remark}[remark]{Remark}
\aliascntresetthe{remark}

\crefname{theorem}{theorem}{theorems}
\crefname{proposition}{proposition}{propositions}
\crefname{lemma}{lemma}{lemmas}
\crefname{corollary}{corollary}{corollaries}
\crefname{definition}{definition}{definitions}
\crefname{assumption}{assumption}{assumptions}
\crefname{problem}{problem}{problems}
\crefname{remark}{remark}{remarks}

\newcommand{\R}{\mathbb R}
\newcommand{\N}{\mathbb N}
\newcommand{\Pp}{\mathbb P}
\newcommand{\Q}{\mathbb Q}
\newcommand{\E}{\mathbb E}
\newcommand{\F}{\mathcal F}

\newcommand{\cE}{\mathcal E}

\newcommand{\dd}{\,\mathrm d}
\newcommand{\one}{\mathbf 1}
\newcommand{\ip}[2]{\left\langle #1,#2\right\rangle}
\newcommand{\norm}[1]{\left\lVert #1\right\rVert}
\newcommand{\abs}[1]{\left\lvert #1\right\rvert}
\newcommand{\br}[1]{\left\langle #1\right\rangle}
\newcommand{\esssup}{\operatorname*{ess\,sup}}

\title{Backward stochastic partial differential equations with unbounded random coefficients
\thanks{ This work is supported by
		the National Key R\&D Program of China (2022YFA1006102),
		the National Natural Science Foundation of China (12471418, 12595294, 12231002),
		and the New Cornerstone Science Foundation (NCI202501).}}
\author{Jie Xiong
\thanks{Department of Mathematics and Shenzhen International Center for Mathematics,
		Southern University of Science and Technology, Shenzhen 518055, China.
		(\texttt{xiongj@sustech.edu.cn}).}
		 \and 
		 Wen Xu
\thanks{School of Mathematical Sciences, Peking University,
		Beijing 100871, China.
		(\texttt{xuwen@math.pku.edu.cn}).}
		\and
		Zuo Quan Xu
	\thanks{	Department of Applied Mathematics, The Hong Kong Polytechnic University, Kowloon, Hong Kong
		SAR, China. 	(\texttt{maxu@polyu.edu.hk})}
		 \and 
		 Ying Yang
\thanks{Department of Mathematics,
		Southern University of Science and Technology, Shenzhen 518055, China.
		(\texttt{12331007@sustech.edu.cn}).}}
\date{}
\hypersetup{
  pdftitle={Backward stochastic partial differential equations with unbounded random coefficients },
  pdfauthor={Jie Xiong, Wen Xu, Zuoquan Xu, Ying Yang},
  pdfsubject={Nonlinear filtering, dual BSPDEs, and measure-valued uniqueness},
  pdfkeywords={nonlinear filtering, Zakai equation, BSPDE, unbounded observation drift}
}

\begin{document}
\maketitle

\begin{abstract}
We study a linear backward stochastic partial differential equation 
with unbounded observation-adapted random coefficients, motivated
by the duality approach to nonlinear filtering.
The coefficients depend predictably on the observation history through a
control process. The drift and the coefficient multiplying the martingale integrand may grow
linearly in space.  The diffusion matrix may  grow quadratically in space
and is allowed to be degenerate. Under bounded positive-order
spatial derivatives coefficients and bounded smooth terminal data,
we prove existence and uniqueness in a polynomially weighted Sobolev
class up to control-energy stopping times. The proof combines entropy bounds for likelihoods, direct spatial differentiation of the parameterized BSDE with unbounded coefficient, conditional parameter estimates, and stochastic-flow reconstruction.
\end{abstract}

\noindent\textbf{Keywords.}
 backward stochastic partial differential equation;  unbounded random coefficients; degenerate diffusion;  stochastic flows;
 weighted Sobolev spaces;
 nonlinear filtering.

\medskip
\noindent\textbf{MSC 2020.}
Primary 60H15; Secondary 60G35, 60H10.

\medskip


\section{Introduction}
\label{sec:introduction}

Nonlinear filtering seeks to characterize the conditional distribution of a hidden signal given the available noisy observations. When both the signal and observation processes evolve according to stochastic differential equations, the normalized filter satisfies the Kushner--Stratonovich equation, while its unnormalized counterpart satisfies the linear Zakai equation, see,  \citet{Kallianpur1980,BainCrisan2009,Xiong2008}. Classical results establish existence and uniqueness for these measure-valued stochastic partial differential equations (SPDEs) under boundedness assumptions on the coefficients. A more challenging question is whether these equations still uniquely characterize the filter when the coefficients are random and unbounded.

Under a probability $\Pp$, the motivating partially observed diffusion model is
\begin{equation}
	\begin{aligned}
		\dd X_t&=b(X_t,u_t)\dd t+\sigma(X_t,u_t)\dd W_t,\\
		\dd Y_t&=h(X_t,u_t)\dd t+\dd V_t,
	\end{aligned}
	\label{eq:intro-model}
\end{equation}
where $X$ is the unobserved $\R^d$-valued state process,
$Y$ is the $\R^m$-valued observation process, and
$W$ and $V$ are independent Brownian motions under $\Pp$
of dimensions $r$ and $m$, respectively, and $u$ is predictable with respect to the
observation filtration. 

Allowing the coefficients to depend on an observation-predictable process \(u\) introduces an additional difficulty in the duality argument for filtering equations. Although \(u\) is adapted, a pathwise solution of the associated backward PDE generally depends on future values of \(u\), and hence need not be adapted to the observation filtration. As emphasized by \citet{CrisanPardoux2026}, this anticipation prevents a direct application of the classical PDE-based duality argument and motivates the use of an adapted backward stochastic partial differential equation (BSPDE) with an additional martingale term. In our setting, this difficulty is compounded by the unboundedness of the coefficients and the input \(u\), requiring a construction that preserves observation adaptation while providing sufficient spatial regularity and integrability.

A standard approach to nonlinear filtering is to introduce a reference probability measure under which the observation process $Y$ is a Brownian motion. To study uniqueness of the Zakai equation related to system \eqref{eq:intro-model}, one may use duality with a  BSPDE, leading to the equation considered in this paper. Our main objective is to establish existence and uniqueness for this BSPDE with random and unbounded coefficients, which provides a basis for studying the uniqueness of solutions to the corresponding Zakai equation.

Under the reference probability $\Q$, let $Y$ denote the Brownian driver of
the BSPDE and let $u$ be $\mathbb F^Y$-predictable with
$\int_0^T|u_s|^2\dd s<\infty$ almost surely. The process $u$ may depend on the entire history of $Y$ up to the current time and need not admit a representation of the Markovian form $u_t=U(t,Y_t)$.  We allow
\[
|b(x,v)|+\|\sigma(x,v)\|+|h(x,v)|\le C(1+|x|+|v|),
\]
so that $a=\sigma\sigma^\top$ can have quadratic spatial growth.
For a stopping time $\theta$ bounded by a control-energy stopping time, we study
\begin{equation}
	\begin{cases}
		\dd f_t(x)=-\bigl(L_tf_t(x)+h(x,u_t)^\top g_t(x)\bigr)\dd t
		+g_t(x)^\top\dd Y_t,\\
		f_\theta(x)=\gamma(x).
	\end{cases}
	\label{eq:intro-dual-bspde}
\end{equation}
Here $f$ is scalar, $g$ takes values in $\R^m$, and $L_t$ is the
diffusion generator defined in \eqref{eq:generator-definition}.

The study of BSPDEs is closely related to stochastic control, stochastic Feynman--Kac representations, and duality methods for filtering equations.  The first approach to studying BSPDEs is based on a priori estimates in Sobolev spaces. For example,
 Ma and Yong \cite{MaYong1999} established existence and uniqueness of adapted solutions, spatial regularity, and comparison results.  \citet{DuTangZhang2013}   established \(W^{m,p}\)-estimates in the degenerate setting. However, both works require the relevant coefficients 
  to be uniformly bounded. Consequently, their results do not cover the case of unbounded coefficients with linear growth. For further results on the well-posedness, maximum principles, and
 regularity of BSPDEs, we refer to
 \citet{QiuTang2012,DuZhang2013,TangWei2016} and the references therein.

A complementary approach relates BSPDEs to parameterized backward stochastic differential equations (BSDEs) through stochastic flows.  Under boundedness
and regularity assumptions, \citet{Tang2005} constructed adapted
solutions by composing random fields generated by parameterized
BSDEs with the inverse of a forward stochastic flow.
\citet{MaYinZhang2012} developed a non-Markovian decoupling-field
framework connecting FBSDEs with degenerate BSPDEs in weighted
Sobolev spaces. Although these works provide probabilistic constructions and
representation results, additional estimates are needed to
handle the case where both the forward coefficients and the
coefficient multiplying the martingale integrand are unbounded.  For further references on stochastic flows, we refer the reader to
\citet{ChenTang2015}, \citet{BallyMatoussi2001} and the references therein. Moreover, \citet{BayerQiuYao2022} establish existence and uniqueness of weak
solutions within a prescribed growth condition for a class of BSPDEs
arising in rough-volatility models. The present equation
allows spatially growing diffusion coefficients together with
the term $h(x,u_t)^\top g_t$, whose unbounded coefficient
$h(x,u_t)$ is not covered by their assumptions on the generator.

A key technique used in this paper is to relate the BSPDE
\eqref{eq:intro-dual-bspde} directly to the parameterized BSDE
\eqref{eq:untruncated-BSDE} through stochastic flows.
Entropy estimates make the likelihood a true martingale, and an
equivalent change of probability gives the bounded solution.
We construct and estimate its variational equations before identifying
them as spatial derivatives by difference quotients. This yields the
required estimates under the reference probability without a
coefficient truncation or a moment assumption on the likelihood density.
The inverse-flow reconstruction \eqref{eq:reconstruct-limit}
involves an auxiliary Brownian motion $W$. We establish observation
adaptation of $f$ through the diagonal representation
\eqref{eq:diagonal-Y} in the reduced filtration
\eqref{eq:future-filtration}. Filtration shrinkage and predictable
representation in the stopped observation filtration eliminate the
auxiliary $W$-martingale term. Quadratic covariations with $Y$ then
identify an observation-predictable version of $g$, giving the BSPDE
directly in Proposition~\ref{prop:characteristic-equation}.

The main contributions of this paper are as follows.
\begin{enumerate}
	\item
	We establish existence and uniqueness for the degenerate
	BSPDE \eqref{eq:intro-dual-bspde} when $b$, $\sigma$, and $h$
	may grow linearly in both the spatial variable and the
	observation-predictable input $u$.
	Under the stated spatial regularity assumptions, the result
	allows dependence on the entire observation history,
	observation-measurable random terminal fields
	$\gamma\in\mathscr G_\theta^{n_*+1}$, and random horizons
	$\theta\le\tau_k$.
	
	\item
	We resolve the measurability difficulty arising from the
	auxiliary Brownian motion $W$ in the stochastic-flow construction.
	For the reconstructed fields \eqref{eq:reconstruct-limit},
	we prove pointwise observation adaptation of a jointly continuous
	version of $f$ and joint observation predictability of $g$.
	The argument identifies and eliminates the auxiliary
	$W$-martingale term directly.
	
	\item
	We establish well-posedness and spatial derivative estimates
	for the parameterized BSDE \eqref{eq:untruncated-BSDE} with
	unbounded coefficient $h$.
	The variational solutions are constructed under equivalent
	probabilities and estimated under the reference measure $\Q$.
	Difference quotients under a fixed equivalent probability,
	together with higher reference-measure moments, identify these
	solutions as strong parameter derivatives.
	The argument does not assume exponential integrability of
	$\int_t^T|H_s^{t,x}|^2\,\mathrm{d}s$.
	
	\item
	We obtain quantitative spatial regularity and growth
	control for the reconstructed solution.
	Specifically, we establish the weighted Sobolev estimate
	\eqref{eq:weighted-Sobolev-estimate}, the weighted spatial
	supremum estimates \eqref{eq:weighted-sup-2} for derivatives
	through order two, and almost sure joint continuity
	of $f$ in time and space.
	These properties provide the regularity and growth
	control needed for subsequent duality arguments
	with measure-valued filtering equations in a companion article \cite{xu}.
\end{enumerate}

The paper is organized as follows.
\Cref{sec:assumptions} introduces the probabilistic framework
and assumptions and states the main theorem.
\Cref{sec:parameter-bsde} develops the stochastic-flow and
parameterized BSDE construction, proves well-posedness for the
BSDE with unbounded coefficient, and establishes its spatial
parameter derivatives and estimates directly.
\Cref{sec:bspde} reconstructs the BSPDE solution through
composition with the inverse stochastic flow and establishes
observation adaptation, weighted spatial estimates, and uniqueness.
Finally, \Cref{sec:conclusion} provides concluding remarks
and discusses the connection to nonlinear filtering.



\section{Assumptions and statement of the main theorem}
\label{sec:assumptions}

Fix $T>0$ and positive integers $d,r,m,p$. Let
$\Omega=C_0([0,T];\R^r)\times C_0([0,T];\R^m)$ with product Wiener
probability $\Q$, completed sigma-field $\mathcal F$, and coordinate
processes $(W,Y)$.  Under $\Q$, $W$ and $Y$ are independent
standard Brownian motions of dimensions $r$ and $m$,
respectively. Let $\mathbb F$ be their usual augmented natural
filtration and $\mathbb F^Y$ the usual augmented natural filtration of $Y$,
all completions use the same $\Q$-null sets. Thus $(W,Y)$ is a product
Brownian motion in $\mathbb F$, $\mathcal F_0$ is trivial modulo null sets,
and $\mathbb F$ and $\mathbb F^Y$ have the predictable representation
property with respect to $(W,Y)$ and $Y$, respectively. 

Let $U\subset\R^p$ be Borel, equipped with the Euclidean norm, and fix an
$\mathbb F^Y$-predictable $U$-valued process $u$ such that
$\int_0^T|u_s|^2\dd s<\infty$ almost surely. Define
\begin{equation}
	\tau_k:=\inf\left\{t\in[0,T]:\int_0^t|u_s|^2\dd s\ge k\right\}\wedge T,
	\qquad k\in\N,\quad \inf\varnothing=\infty,
	\label{eq:control-stopping}
\end{equation}
where $\N=\{1,2,\ldots\}$. These are $\mathbb F^Y$-stopping times,
$\int_0^{\tau_k}|u_s|^2\dd s\le k$ almost surely, and
$\tau_k\uparrow T$ almost surely. The results below are stated for a fixed
$k$ and an $\mathbb F^Y$-stopping time $0\le\theta\le\tau_k$, their
constants may depend on $k$.

Define 
\begin{equation}
n_*:=\left\lfloor\frac d2\right\rfloor+3.
\label{eq:n-star}
\end{equation}
This is the smallest integer satisfying
$
n_*>\frac d2+2.
$ 
 Write $\br{x}:=(1+|x|^2)^{1/2}$ and let $\mathrm{Id}$ denote the identity matrix.

For $\ell>0$ and integers $n\ge0$ and $q\ge1$, let
$H_\ell^n(\R^d;\R^q)$ consist of all measurable functions
$v=(v^1,\ldots,v^q):\R^d\to\R^q$ whose weak spatial
derivatives up to order $n$ are square-integrable with
respect to the weighted measure $\br{x}^{-2\ell}\,\dd x$.
This space is equipped with the norm
\begin{equation}
	\|v\|_{H_\ell^n(\R^d;\R^q)}^2
	:=\sum_{a=1}^q\sum_{|\alpha|\le n}
	\int_{\R^d}|D^\alpha v^a(x)|^2
	\br{x}^{-2\ell}\,\dd x.
	\label{eq:weighted-Hn}
\end{equation}
When $q=1$, we write $H_\ell^n(\R^d)$, and when $d=1$, we write  $H_\ell^n$. 
We omit the domain and target space when they are clear
from the context. For a deterministic $t\in[0,T]$ and $q\ge2$,
let $\mathbb S^q$ and $\mathbb H^q$ consist of continuous
$\mathbb F$-adapted and $\mathbb F$-predictable processes on
$[t,T]$, respectively, with finite norms
\[
 \norm{U}_{\mathbb S^q}^q
 :=\E^\Q\sup_{t\le s\le T}|U_s|^q,
 \qquad
 \norm{V}_{\mathbb H^q}^q
 :=\E^\Q\left(\int_t^T|V_s|^2\dd s\right)^{q/2}.
\]
The time interval and target dimension will be clear from the
context. A probability or filtration specified in parentheses
replaces $\Q$ or $\mathbb F$ in this definition.

For smooth $\varphi$, define
\begin{equation}
	L_t\varphi(x):=b(x,u_t)\cdot\nabla\varphi(x)
	+\frac12\sum_{i,j=1}^d a^{ij}(x,u_t)D_{ij}\varphi(x),
	\label{eq:generator-definition}
\end{equation}
where $	 a(x,u_t):=\sigma(x,u_t)\sigma(x,u_t)^\top.$
Its adjoint is
\[
 L_t^*\varphi=-\sum_iD_i(b^i\varphi)
 +\frac12\sum_{i,j}D_{ij}(a^{ij}\varphi),
\]
with coefficients evaluated at $(x,u_t)$.  

Throughout this paper, \(B_M\) (or   \(B_R\)), with an uppercase subscript \(M\) (or   \(R\)), denotes the ball of radius \(M\) (or   \(R\)) centred at the origin in \(\mathbb R^d\), to distinguish it from the process \(B_s\).  For $k=0,1,2$, we write
\[
\|f\|_{C^k(B_R)}
:=\sum_{|\alpha|\le k}\sup_{x\in B_R}|D^\alpha f(x)|,
\]
with $C^0(B_R)=C(B_R)$. For \(1\le p,q<\infty\), \(L^q(\mathbb Q)\) denotes the space of random variables with finite \(q\)-th moment, equipped with the norm \(\|X\|_{L^q(\mathbb Q)}=(\mathbb E^{\mathbb Q}|X|^q)^{1/q}\), while \(L^2(B_M)\) denotes the space of square-integrable functions on \(B_M\), with norm \(\|v\|_{L^2(B_M)}=(\int_{B_M}|v(y)|^2\,dy)^{1/2}\). We write \(L^p(\mathbb Q\otimes dy;B_M)\) for the space of jointly measurable random fields \(h(\omega,y)\) with finite norm \(\|h\|_{L^p(\mathbb Q\otimes dy;B_M)}=(\int_{B_M}\mathbb E^{\mathbb Q}|h(y)|^p\,dy)^{1/p}\). 

For \(M>0\) and integers \(n\ge0\) and \(q\ge1\), let \(H^n(B_M;\mathbb R^q)\) denote the Sobolev space of \(\mathbb R^q\)-valued functions whose weak derivatives of orders up to \(n\) are square-integrable on \(B_M\), equipped with the norm
\[
\|v\|_{H^n(B_M;\mathbb R^q)}^2
:=
\sum_{a=1}^q\sum_{|\alpha|\le n}
\int_{B_M}|D^\alpha v^a(x)|^2\,\dd x. 
\]
In the scalar case, we write \(H^n(B_M)\).


We impose the following assumptions. 

\begin{assumption}
\label{ass:filtering}
The functions
\[
 b:\R^d\times U\to\R^d,
 \qquad
 \sigma:\R^d\times U\to\R^{d\times r},
 \qquad
 h:\R^d\times U\to\R^m
\]
are Borel measurable.  There is a constant $K\ge1$ such that, for all $x,y\in\R^d$ and $v\in U$,
\begin{align}
 &\abs{b(x,v)-b(y,v)}+\norm{\sigma(x,v)-\sigma(y,v)}
   \le K\abs{x-y},
   \label{eq:filtering-lipschitz}\\
 &\abs{b(x,v)}+\norm{\sigma(x,v)}+\abs{h(x,v)}
   \le K(1+\abs{x}+\abs{v}).
   \label{eq:filtering-growth}
\end{align}
\end{assumption}

\begin{assumption}
\label{ass:bspde-regularity}
For every multi-index
$\alpha$ with $1\le|\alpha|\le n_*+1$, the positive-order spatial derivatives
$
 D_x^\alpha b, D_x^\alpha\sigma, D_x^\alpha h
$ 
exist and are bounded uniformly on $\R^d\times U$. 
\end{assumption}

For an integer $n\ge0$, let $\mathscr G_\theta^n$ denote
the class of real-valued,
$\mathcal F^Y_\theta\otimes\mathcal B(\R^d)$-measurable
terminal fields $\gamma$ that are $C^n$ in $x$,
$\Q$-a.s., and satisfy
\begin{equation}
	\|\gamma\|_{n,\infty}
	:=\operatorname*{ess\,sup}_{\omega}
	\sum_{|\alpha|\le n}\sup_{x\in\R^d}
	|D_x^\alpha\gamma(\omega,x)|<\infty.
	\label{eq:terminal-class}
\end{equation}


We now define a solution to the BSPDE \eqref{eq:intro-dual-bspde}.

\begin{definition}
\label{def:bspde-solution}
Let $\theta\le\tau_k$ be an $\mathbb F^Y$-stopping time and
let $\gamma\in\mathscr G_\theta^{n_*+1}$.
We call a pair $(f,g)$ a weighted Sobolev solution
of the BSPDE \eqref{eq:intro-dual-bspde} if, for some $\ell>0$,
the fields admit jointly measurable representatives such that,
for every deterministic $s\in[0,T]$, the map
$(\omega,x)\mapsto f_{s\wedge\theta}(\omega,x)$ is
$\mathcal F_s^Y\otimes\mathcal B(\R^d)$-measurable, and the
extension of $g$ by zero after $\theta$ is
$\mathcal P^Y\otimes\mathcal B(\R^d)$-measurable, where
$\mathcal P^Y$ is the predictable sigma-field of $\mathbb F^Y$
on $\Omega\times[0,T]$. In particular, for these representatives,
$f_{\cdot\wedge\theta}(x)$ is adapted and $g_\cdot(x)$ is
predictable for every fixed $x\in\R^d$.
We also require $f\in L^\infty(\Omega\times[0,\theta]\times\R^d)$ and
\begin{align}
\E^\Q\left[
	\esssup_{0\le s\le\theta}\norm{f_s}_{H_\ell^{n_*}}^2
	+\int_0^\theta
	\norm{g_s}_{H_\ell^{n_*}(\R^d;\R^m)}^2\,\dd s
	\right]<\infty.
	\label{eq:solution-norm}
\end{align}
Moreover, for every $\varphi\in C_c^\infty(\R^d)$
and $t\in[0,T]$, the identity
\begin{align}
	\ip{f_{t\wedge\theta}}{\varphi}
	={}&\ip{\gamma}{\varphi}
	+\int_{t\wedge\theta}^{\theta}
	\ip{f_s}{L_s^*\varphi}\,\dd s
	\notag\\
	&+\int_{t\wedge\theta}^{\theta}
	\ip{h(\cdot,u_s)^\top g_s}{\varphi}\,\dd s
	-\int_{t\wedge\theta}^{\theta}
	\ip{g_s}{\varphi}^\top\,\dd Y_s
	\label{eq:weak-bspde}
\end{align}
holds $\Q$-a.s..
\end{definition}
We now present the main theorem of this article.

\begin{theorem}
		\label{thm:main-bspde}
	Suppose Assumptions \ref{ass:filtering} and
	\ref{ass:bspde-regularity} hold.
	Fix $k\in\N$, an $\mathbb F^Y$-stopping time
	$\theta\le\tau_k$, and $\gamma\in\mathscr G_\theta^{n_*+1}$.
	Then \eqref{eq:intro-dual-bspde} admits a unique
	weighted Sobolev solution $(f,g)$ with bounded first
	component, satisfying \eqref{eq:solution-norm}
	with $\ell=d/2+2$.
	Moreover, $f$ admits a jointly measurable version that is
	$\Q$-almost surely continuous in $(t,x)$ on
	$[0,\theta]\times\R^d$.
	For every $\lambda\ge d/2+2$ and every multi-index
	$\alpha$ with $|\alpha|\le2$, the solution satisfies
	\begin{align}
		\E^\Q\Bigg[
		&\esssup_{0\le s\le\theta}\sup_{x\in\R^d}
		\frac{|D^\alpha f_s(x)|^2}{\br{x}^{2\lambda}}
		+\int_0^\theta\sup_{x\in\R^d}
		\frac{|D^\alpha g_s(x)|^2}{\br{x}^{2\lambda}}\dd s
		\Bigg]<\infty.
		\label{eq:weighted-sup-2}
	\end{align}
	\end{theorem}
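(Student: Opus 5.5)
The plan follows the stochastic-flow route outlined in the introduction; existence and uniqueness are handled separately.

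\emph{Existence.} For each $(t,x)$, let $X^{t,x}$ solve the forward SDE $\dd X_s=b(X_s,u_s)\dd s+\sigma(X_s,u_s)\dd W_s$, $X_t=x$. This equation is well posed by \eqref{eq:filtering-lipschitz}--\eqref{eq:filtering-growth}. Since $\int_0^{\tau_k}|u_s|^2\dd s\le k$, all moments $\E^\Q\sup_s|X_s^{t,x}|^q\le C\br{x}^q$ hold with constants depending on $k$. By Assumption~\ref{ass:bspde-regularity}, $x\mapsto X^{t,x}$ is a $C^{n_*+1}$ flow of diffeomorphisms, and its derivatives have moments bounded uniformly in $x$.

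Next consider the parameterized BSDE
\[
Z^{t,x}_s=\gamma(X^{t,x}_\theta)+\int_s^\theta h(X_r^{t,x},u_r)^\top G_r\dd r-\int_s^\theta G_r^\top\dd Y_r-\int_s^\theta \tilde G_r^\top\dd W_r .
\]
Set $H_s=h(X_s^{t,x},u_s)$. The likelihood $\cE(\int H^\top\dd Y)$ is a true martingale: its relative entropy is controlled by $\E\int_t^\theta|H_s|^2\dd s<\infty$, and the moment bounds show this integral is finite. Under the resulting equivalent measure $\tilde\Q$, the process $Y-\int H\dd s$ is a Brownian motion. This gives $Z_s=\E^{\tilde\Q}[\gamma(X_\theta)\mid\F_s]$, which is bounded by $\|\gamma\|_{0,\infty}$, and $(G,\tilde G)$ follows by martingale representation. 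The spatial derivatives $D^\alpha_x Z$ for $|\alpha|\le n_*+1$ come from the formally differentiated (variational) BSDEs. These are solved under the same $\tilde\Q$; their drifts contain the additional terms $D^\beta h\cdot G$, with $D^\beta h$ bounded. They are then estimated in $\mathbb S^q(\Q)\times\mathbb H^q(\Q)$ with polynomial growth in $\br{x}$. Finally, difference quotients under the fixed $\tilde\Q$ identify these solutions as true derivatives.

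Define $f_t(x):=Z_t^{t,\cdot}$ through the reconstruction $f_s(y)=Z^{0,x}_s|_{x=(X^{0,\cdot}_s)^{-1}(y)}$. The generalized Itô--Wentzell formula then shows that $(f,g)$ satisfies the equation \eqref{eq:intro-dual-bspde} with an extra $\dd W$ term. Weighted $H^{n_*}_\ell$ bounds with $\ell=d/2+2$ follow from the change of variables $y=X_s(x)$, the moment bounds on the Jacobian, and the bound $\int\br{x}^{-2\ell}\br{x}^{2q}\dd x<\infty$ when the powers match. Sobolev embedding (using $n_*>d/2+2$) then gives \eqref{eq:weighted-sup-2} for $|\alpha|\le2$ and a jointly continuous version of $f$.

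\emph{Main obstacle: observation adaptation.} It remains to show that $f$ is $\mathbb F^Y$-adapted and that the $W$-integrand vanishes. Because $\gamma$ and $u$ are $Y$-measurable and $W$ is independent of $Y$, one has $f_t(x)=\E^{\tilde\Q}[\gamma(X^{t,x}_\theta)\mid\F_t]$, and $X^{t,x}$ on $[t,\theta]$ depends only on increments of $W$ after $t$ together with $Y$. Conditioning on $\F_t^W\vee\F_T^Y$ therefore removes the dependence on $W|_{[0,t]}$. Combining this with the $\F_t$-measurability gives $\F^Y_t$-measurability. This is the step where the reduced filtration and the diagonal representation are needed. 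Once $f$ is $\mathbb F^Y$-adapted, filtration shrinkage and predictable representation in $\mathbb F^Y$ force the $\dd W$ martingale term to vanish. The covariation $\dd\langle f(x),Y\rangle$ then yields an $\mathbb F^Y$-predictable $g$.

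\emph{Uniqueness.} By linearity it suffices to treat $\gamma=0$. For a solution $(f,g)$ in the stated class, mollify $f$ in space and apply Itô's formula to $f_s(X^{t,x}_s)\,\cE_s$, where $\cE$ is the likelihood. The mollification and the weight $\br{x}^{-2\ell}$ are controlled by \eqref{eq:solution-norm} and by $L^\infty$ boundedness. This shows that $f_t(x)\cE_t$ is a martingale with terminal value $0$, so $f=0$; then $g=0$ by uniqueness of the martingale representation in \eqref{eq:weak-bspde}.
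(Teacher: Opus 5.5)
Your plan is essentially the paper's proof, step for step: entropy-based likelihood, variational BSDEs built under $\widehat{\Q}^{t,x}$ and identified by difference quotients under one fixed measure, inverse-flow reconstruction, the diagonal identity $f_t(x)=F_t^{t,x}$ for observation adaptation, filtration shrinkage plus predictable representation to remove the $W$-term, change of variables for the weight $\ell=d/2+2$, and uniqueness via the likelihood along characteristics; your independence argument for the $\mathcal F_t^Y$-measurability of $F_t^{t,x}$, and your deduction of $\bar g=0$ from the weak identity once $\bar f=0$, are only light variants of the paper's re-solution in $\mathbb K^s$ and its characteristic quadratic-variation argument. Two small corrections: the assumptions only guarantee a $C^{n_*}$ flow, not $C^{n_*+1}$, because the top-order coefficient derivatives are merely bounded, but $n_*$ parameter derivatives are all you actually use; and the weighted step needs, besides Jacobian moments, the negative-moment bound $\E^\Q\sup_{s\le\theta}\br{\Phi_s(y)}^{-mq}\le C\br{y}^{-mq}$ of Lemma~\ref{lem:weighted-multiplier}, which follows from It\^o's formula for $\br{\Phi_s(y)}^{-m}$ together with $\int_0^\theta|u_s|^2\dd s\le k$.
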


Note the left of \eqref{eq:weighted-sup-2} is decreasing in $\lambda$, so we have 
	\begin{align}
		\sum_{|\alpha|\le 2}\E^\Q\Bigg[
		&\esssup_{0\le s\le\theta}\sup_{x\in\R^d}
		\frac{|D^\alpha f_s(x)|^2}{\br{x}^{d+4}}
		+\int_0^\theta\sup_{x\in\R^d}
		\frac{|D^\alpha g_s(x)|^2}{\br{x}^{d+4}}\dd s
		\Bigg]<\infty.
	\end{align}


\begin{remark}
	The derivatives $D^\alpha f$ and $D^\alpha g$ in
	\eqref{eq:weighted-sup-2} are understood as weak spatial
	derivatives. Since $n_*>d/2+2$, local Sobolev embedding gives
	$C^2$  representatives of $f_s$ and $g_s$.
	Their weak derivatives up to order two coincide with the
	classical derivatives of these representatives. Therefore, 
	the spatial suprema in \eqref{eq:weighted-sup-2} are taken
	over the resulting continuous representatives of the derivatives.
\end{remark}

\section{Parameterized BSDEs}
\label{sec:parameter-bsde}

In this section, we study the parameterized BSDE associated
with the stopped forward stochastic flow \eqref{eq:forward-flow}.
We first construct the likelihood and the bounded solution of
\eqref{eq:untruncated-BSDE}. We then construct variational solutions,
prove their conditional and reference-measure estimates, and identify
them as spatial derivatives by difference quotients. In particular,
\eqref{eq:uniform-F-parameter} and \eqref{eq:uniform-G-parameter}
hold directly for the BSDE with unbounded coefficient.

\subsection{Forward stochastic flow}

For $0\le t\le T$ and $x\in\R^d$,  define 
\begin{equation*}
 A_t:=\{t\le\theta\},\quad
 \chi_s^t:=\one_{\{t<s\le\theta\}},\quad 0\le s\le T.
\end{equation*}
The process $\chi^t$ is predictable. Define the forward flow $X^{t,x}$ on the deterministic interval $[0,T]$ by
\begin{equation}
	X_s^{t,x}
	=x+\int_0^s\chi_a^t b(X_a^{t,x},u_a)\dd a
	+\int_0^s\chi_a^t\sigma(X_a^{t,x},u_a)\dd W_a.
	\label{eq:forward-flow}
\end{equation}
Write
$
 \Phi_{t,s}(x):=X_s^{t,x} $ and $
 \Phi_s:=\Phi_{0,s}. 
$
The indicator $\chi^t$ stops the dynamics at $\theta$.
The drift and diffusion coefficients remain potentially unbounded,
with linear growth in the spatial and control variables.

The following Lemma \ref{98} follows from \citet[Theorem~4.6.5]{Kunita1990}.

\begin{lemma}\label{98}
		Under Assumptions \ref{ass:filtering} and \ref{ass:bspde-regularity},
	$(\Phi_{t,s})_{0\le t\le s\le T}$ is a stochastic flow of
	$C^{n_*}$-diffeomorphisms.
	\end{lemma}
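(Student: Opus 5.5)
The plan is to realise \eqref{eq:forward-flow} as a Kunita-type stochastic differential equation driven by a continuous $C^{n_*,\delta}$-semimartingale field, and then to read off the conclusion from \citet[Theorem~4.6.5]{Kunita1990}. The only point not covered verbatim by that theorem is that our coefficients are random and unbounded through $u$. I will absorb this by a change of the dominating clock, exploiting the stopping $\theta\le\tau_k$.

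First I set, for $x\in\R^d$,
\[
 F(x,s):=\int_0^s\chi_a^t\, b(x,u_a)\dd a+\int_0^s\chi_a^t\,\sigma(x,u_a)\dd W_a .
\]
This is a continuous semimartingale field whose local characteristic is
\[
 \bigl(\chi_s^t\,\sigma(x,u_s)\sigma(y,u_s)^\top,\ \chi_s^t\, b(x,u_s)\bigr),
\]
taken with respect to $\dd s$. It is $\mathbb F$-predictable, since $u$ is $\mathbb F^Y$-predictable and $\chi^t$ is predictable. Kunita's theory allows the characteristics to be taken relative to a general continuous increasing process $A$, so I choose
\[
 \dd A_s:=\bigl(1+|u_s|^2\bigr)\chi^t_s\dd s .
\]
Since $\chi^t$ vanishes after $\theta\le\tau_k$, this gives $A_T\le T+k$ almost surely. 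Relative to $A$, the normalised characteristics are $\chi\, b(x,u)/(1+|u|^2)$ and $\chi\,\sigma(x,u)\sigma(y,u)^\top/(1+|u|^2)$. Assumption~\ref{ass:filtering} then yields two bounds:
\[
 \frac{|b(x,u)|}{1+|u|^2}\le 2K(1+|x|),
 \qquad
 \frac{\|\sigma(x,u)\sigma(y,u)^\top\|}{1+|u|^2}\le 4K^2(1+|x|)(1+|y|).
\]
These are deterministic bounds, uniform in $\omega$ and $s$. The same assumption gives uniform Lipschitz bounds on the increments in $x$. By Assumption~\ref{ass:bspde-regularity}, $D_x^\alpha b$ and $D_x^\alpha\sigma$ with $1\le|\alpha|\le n_*+1$ are bounded uniformly on $\R^d\times U$. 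Hence the normalised characteristics belong to Kunita's class $B^{n_*,\delta}_{ub}$ (linear-growth version) for any $\delta\in(0,1)$. Here the order-$(n_*+1)$ bounds give the Lipschitz, hence $\delta$-Hölder, control of the $n_*$-th derivatives. The mixed derivatives $D_x^\alpha D_y^\beta$ of the diffusion characteristic are products of such bounded derivatives, times a linear factor when one of the orders is zero.

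With these bounds checked, \citet[Theorem~4.6.5]{Kunita1990} applies to the equation started at time $t$. It gives a modification of $X_s^{t,x}$ that is continuous in $(t,s,x)$ and $C^{n_*}$ in $x$. It also gives that $\Phi_{t,s}$ is a homeomorphism, hence a $C^{n_*}$-diffeomorphism, with the flow property $\Phi_{t,s}=\Phi_{r,s}\circ\Phi_{t,r}$. The diffeomorphism property (injectivity, surjectivity, and $C^{n_*}$ inverse) comes from the same theorem. In Kunita's argument it is obtained from non-degeneracy of the Jacobian, via its linear SDE, and from the properness estimates $\E\,(1+|\Phi_{t,s}(x)|^2)^{-p}\le C(1+|x|^2)^{-p}$. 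These estimates again only require the $A$-normalised linear growth.

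The step I expect to be the main obstacle is justifying the replacement of $\dd s$ by the random clock $A$. Kunita states his moment estimates, via Burkholder and Gronwall, with constants depending on $A_T$. Since $A_T\le T+k$ is bounded almost surely, those estimates remain uniform. If needed, I would make this explicit by the time change $s\mapsto A^{-1}$. Under that time change the equation becomes one with bounded-in-time characteristics on the bounded horizon $[0,T+k]$, to which the theorem applies directly. The $\chi^t$ cut-off makes the flow constant, equal to the identity composition, after $\theta$.
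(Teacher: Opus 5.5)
Your proposal is correct and takes the same route as the paper, which simply invokes \citet[Theorem~4.6.5]{Kunita1990}. Your normalisation of the characteristics by the clock $\dd A_s=(1+|u_s|^2)\chi_s\dd s$ with $A_T\le T+k$, together with the time-change remark, spells out the hypothesis check that the paper leaves implicit. One small cleanup: drive all starting times with the single field built from $\chi^0$, whose increments after time $t$ agree with those of your $\chi^t$-field, so that the flow property $\Phi_{t,s}=\Phi_{r,s}\circ\Phi_{t,r}$ refers to one equation.
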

Define 
\[
\Psi_{t,s}:=\Phi_{t,s}^{-1},
\quad
J_{t,s}(x):=\det D_x\Phi_{t,s}(x)>0,
\quad
\Psi_s:=\Psi_{0,s},
\quad
J_s:=J_{0,s}.
\]

The following Lemma \ref{lem:flow-estimates} gives the regularity and moment estimates for the stopped flow.

\begin{lemma}[Flow estimates]
	\label{lem:flow-estimates}
	Under Assumptions \ref{ass:filtering} and \ref{ass:bspde-regularity}.
	Let $\widetilde{\Q}$ be any probability measure equivalent to $\Q$
	under which $W$ remains an $\mathbb F$-Brownian motion.
Then,	for every $2\le q<\infty$ and $j=1,\ldots,n_*$,
	\begin{align}
		\E^{\widetilde{\Q}}
		\sup_{t\le s\le T}|\Phi_{t,s}(x)|^q
		&\le C_{k,q}\br{x}^q,
		\label{eq:flow-state-moment}\\
		\sup_{0\le t\le T}\sup_{x\in\R^d}
		\E^{\widetilde{\Q}}
		\sup_{t\le s\le T}|D_x^j\Phi_{t,s}(x)|^q
		&\le C_{j,q}.
		\label{eq:flow-derivative-moment}
	\end{align}
	Moreover,
	\begin{align}
		\sup_{0\le t\le T}\sup_{x\in\R^d}
		\E^{\widetilde{\Q}}\sup_{t\le s\le T}
		\Bigg[
		&J_{t,s}(x)^q+J_{t,s}(x)^{-q}
		+\sum_{j=1}^{n_*}
		\big|(D^j\Psi_{t,s})(\Phi_{t,s}(x))\big|^q
		\Bigg]
		\le C_{k,q}.
		\label{eq:maximal-inverse-flow-moments}
	\end{align}
\end{lemma}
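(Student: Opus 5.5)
The plan is to treat the stopped flow \eqref{eq:forward-flow} as a standard SDE with random, predictable, Lipschitz coefficients and linear growth. The only unbounded random input is $u$, and it is controlled by $\int_0^{\theta}|u_s|^2\dd s\le k$. Because $W$ is an $\mathbb F$-Brownian motion under $\widetilde\Q$, all stochastic-calculus tools apply under $\widetilde\Q$ with constants that do not depend on the particular change of measure. This is because every estimate will be pathwise in $u$, thanks to the deterministic energy bound. I will write $\beta_a:=\chi^t_a b(\cdot,u_a)$ and $\varsigma_a:=\chi^t_a\sigma(\cdot,u_a)$. Then $|\beta_a(x)|+\|\varsigma_a(x)\|\le K(1+|x|+|u_a|)\chi^t_a$, with $\int_0^T\chi^t_a|u_a|^2\dd a\le k$ almost surely.

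\textbf{State moments.} For \eqref{eq:flow-state-moment} I apply Itô's formula to $|X_s^{t,x}|^2$ and then raise to the power $q/2$. The drift term is bounded by $C(1+|X|^2+|u|^2)$. The martingale part is handled by BDG under $\widetilde\Q$. The terms involving $|u|^2$ are treated as $\int |u_a|^2\sup_{a'\le a}|X_{a'}|^2\dd a$ together with a stochastic Gronwall argument. A cleaner route is to apply Itô to $e^{-\int_0^s c(1+|u_a|^2)\dd a}\br{X_s}^2$: the exponential is bounded below by $e^{-c(T+k)}$ and above by $1$, so it can be removed at the end, leaving a clean BDG/Gronwall estimate with bound $C_{k,q}\br{x}^q$. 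Since $X_s=x$ for $s\le t$, the supremum over $[t,T]$ is covered.

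\textbf{Derivatives and Jacobian.} By \cref{98} (Kunita), $D_x^j\Phi_{t,s}$ exists for $j\le n_*$ and solves the formally differentiated linear SDE. For $j=1$ this is $\dd D\Phi=\chi Db\,D\Phi\dd a+\chi D\sigma\,D\Phi\dd W$. Its coefficients are bounded uniformly by \cref{ass:bspde-regularity}, independently of $x$ and $u$, so the $\mathbb S^q$ bound follows from BDG and Gronwall uniformly in $(t,x)$. For $j\ge2$ I use induction: the equation for $D^j\Phi$ is linear with the same bounded coefficients, and its source terms are polynomials in $D^{i}\Phi$ with $i<j$, multiplied by bounded $D^{i}b$ and $D^i\sigma$ for $i\le j\le n_*$. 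Hölder's inequality then gives \eqref{eq:flow-derivative-moment}. For $J$, Itô/Liouville gives $J_{t,s}=\exp\bigl(\int_t^s\chi(\mathrm{tr}\,Db-\frac12\mathrm{tr}((D\sigma)^2)\cdots)\dd a+\int_t^s\chi\,\mathrm{tr}\,D\sigma\,\dd W\bigr)$, which has bounded integrands. Hence $J^{\pm q}$ is an exponential of a martingale with bounded quadratic variation times a bounded factor, and Doob's inequality handles the supremum.

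\textbf{Inverse derivatives.} I would write $(D\Psi_{t,s})(\Phi_{t,s}(x))=(D\Phi_{t,s}(x))^{-1}$. By Itô, the inverse matrix $\Gamma$ satisfies a linear SDE $\dd\Gamma=-\Gamma(\chi Db-\chi (D\sigma)^2)\dd a-\Gamma\chi D\sigma\dd W$ with bounded coefficients, so its moments follow exactly as before. For the higher derivatives I would differentiate the identity $D\Psi(\Phi(x))=(D\Phi(x))^{-1}$ in $x$ repeatedly. This expresses $(D^j\Psi)(\Phi(x))$ as a polynomial in $\Gamma$ and in $D^i\Phi(x)$ for $i\le j$, so \eqref{eq:maximal-inverse-flow-moments} follows by Hölder and the earlier bounds. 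The main obstacle is the bookkeeping of this chain-rule inversion, together with ensuring that the suprema in time are taken inside the expectation; I handle the latter by bounding each factor in $\mathbb S^{q'}$ before applying Hölder. The only place where $k$ enters is the state moment, while all derivative constants are in fact independent of $k$.
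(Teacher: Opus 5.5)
Your proposal is correct and follows essentially the same route as the paper. You use BDG and Gronwall for the state moments, induction on the linear variational SDEs with bounded coefficients for $D_x^j\Phi_{t,s}$, the linear SDE for $(D_x\Phi_{t,s})^{-1}$, and chain-rule inversion for $(D^j\Psi_{t,s})(\Phi_{t,s}(x))$. The differences are minor: you bound $J_{t,s}^{\pm q}$ through Liouville's formula and Doob's inequality, where the paper uses Hadamard's bound $J^q+J^{-q}\le C_{d,q}(|\mathcal J_s|^{dq}+|\mathcal K_s|^{dq})$. In your exponential-weight variant, the weight must be $\exp\bigl(-c\int_0^s\chi_a^t(1+|u_a|^2)\dd a\bigr)$; without the factor $\chi^t$, the lower bound $e^{-c(T+k)}$ fails after $\tau_k$.
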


\begin{proof}
	 
First, it is easy to verify:
	\begin{align*}
		&\E^{\widetilde{\Q}}
		\sup_{t\le a\le s}|\Phi_{t,a}(x)|^q\\
		&\quad\le C_q\bigl(1+|x|^q+k^{q/2}\bigr)
		+C_q\int_t^s
		\E^{\widetilde{\Q}}
		\sup_{t\le r\le a}|\Phi_{t,r}(x)|^q\dd a.
	\end{align*}
Then,	Gronwall's inequality yields
	\[
	\E^{\widetilde{\Q}}
	\sup_{t\le s\le T}|\Phi_{t,s}(x)|^q
	\le C_q\bigl(1+|x|^q+k^{q/2}\bigr)
	\le C_{k,q}\br{x}^q.
	\]
This	proves \eqref{eq:flow-state-moment}.
	
Denote
	\[
	B_s:=\chi_s^tD_xb(\Phi_{t,s}(x),u_s),
	\quad
	C_s^\nu:=\chi_s^tD_x\sigma^\nu(\Phi_{t,s}(x),u_s),
	\]
where $\sigma^\nu$  denotes the $\nu$th column of the diffusion matrix  $\sigma$. 
Then, for a positive constant $C$,  we have,  
	\[
	|B_s|+\sum_{\nu=1}^r|C_s^\nu|\le C.
	\]
	For $1\le j\le n_*$,  we have 
\begin{align*}
	\dd D_x^j\Phi_{t,s}(x)
	={}&\bigl[B_sD_x^j\Phi_{t,s}(x)+R_s^{(j)}\bigr]\dd s\\
	&+\sum_{\nu=1}^r
	\bigl[C_s^\nu D_x^j\Phi_{t,s}(x)
	+S_s^{(j),\nu}\bigr]\dd W_s^\nu,
\end{align*}
with
\[
D_x\Phi_{t,t}(x)=\mathrm{Id},
\qquad
D_x^j\Phi_{t,t}(x)=0\quad(j\ge2).
\]
Here, the remainders $R_s^{(j)}$ and $S_s^{(j),\nu}$ collect
the chain-rule terms involving only spatial derivatives of
the flow of orders below $j$.
Note that $R^{(1)}_s=S^{(1),\nu}_s=0$, and for $j\ge2$,
	\begin{equation}
		|R_s^{(j)}|+|S_s^{(j)}|
		\le C_j\sum_{\ell=2}^j
		\ \sum_{\substack{i_1+\cdots+i_\ell=j\\i_a\ge1}}
		\prod_{a=1}^{\ell}|D_x^{i_a}\Phi_{t,s}(x)|,
		\label{eq:flow-derivative-forcing}
	\end{equation}
	
For $j= 1$, \eqref{eq:flow-derivative-moment} is clearly true, we proceed by induction on $j$. 	Suppose the estimate holds for every order below $j$ and every
	finite exponent $q\ge2$. For each product in
	\eqref{eq:flow-derivative-forcing}, all $i_a<j$,  H\"older's
	inequality gives
	\begin{align*}
		&\E^{\widetilde{\Q}}\sup_{t\le s\le T}
		\prod_{a=1}^{\ell}|D_x^{i_a}\Phi_{t,s}(x)|^q\\
		&\quad\le
		\prod_{a=1}^{\ell}
		\left(
		\E^{\widetilde{\Q}}\sup_{t\le s\le T}
		|D_x^{i_a}\Phi_{t,s}(x)|^{q\ell}
		\right)^{1/\ell}
		\le C_{j,q}.
	\end{align*}
	Consequently,
	\[
	\E^{\widetilde{\Q}}\int_t^T
	\bigl(|R_s^{(j)}|^q+|S_s^{(j)}|^q\bigr)\dd s
	\le C_{j,q}.
	\]
Then, BDG and Gronwall inequalities yield \eqref{eq:flow-derivative-moment}, which
completes the induction.
	
Now, we set
	\[
	\mathcal J_s:=D_x\Phi_{t,s}(x),
	\qquad
	\mathcal K_s:=\mathcal J_s^{-1}.
	\]
Then the	Matrix It\^o calculus yields
	\[
	\dd\mathcal K_s
	=\mathcal K_s\left(-B_s+\sum_{\nu=1}^rC_s^\nu C_s^\nu\right)\dd s
	-\sum_{\nu=1}^r\mathcal K_sC_s^\nu\dd W_s^\nu,
	\qquad \mathcal K_t=\mathrm{Id}.
	\]
Since	the coefficients are uniformly bounded, we have
	\begin{equation}\label{x2}
	\sup_{0\le t\le T}\sup_{x\in\R^d}
	\E^{\widetilde{\Q}}\sup_{t\le s\le T}|\mathcal K_s|^q
	\le C_q
	\qquad(q\ge2).
	\end{equation}
	For $j\ge2$, we have 
	\begin{align*}
		&\big|(D^j\Psi_{t,s})(\Phi_{t,s}(x))\big|\\
		&\quad\le C_j|\mathcal K_s|^j
		\sum_{\ell=1}^{j-1}
		\sum_{\substack{i_1+\cdots+i_\ell=j\\i_a\ge1}}
		\big|(D^\ell\Psi_{t,s})(\Phi_{t,s}(x))\big|
		\prod_{a=1}^{\ell}|D_x^{i_a}\Phi_{t,s}(x)|,
	\end{align*}
by induction which implies 
	\begin{equation}\label{x1}
	\sup_{0\le t\le T}\sup_{x\in\R^d}
	\E^{\widetilde{\Q}}\sup_{t\le s\le T}
	\big|(D^j\Psi_{t,s})(\Phi_{t,s}(x))\big|^q
	\le C_{j,q},
	\qquad 1\le j\le n_*.
	\end{equation}
	
Note that, $J_{t,s}(x)=\det\mathcal J_s>0$ and
	$J_{t,s}(x)^{-1}=\det\mathcal K_s$. Then the Hadamard's inequality (see \citet{Lange2014}) gives
	\[
	J_{t,s}(x)^q+J_{t,s}(x)^{-q}
	\le C_{d,q}\left(
	|\mathcal J_s|^{dq}+|\mathcal K_s|^{dq}
	\right).
	\]
Finally, combine \eqref{x2}, \eqref{x1} and \eqref{eq:flow-derivative-moment} prove
	\eqref{eq:maximal-inverse-flow-moments}.
\end{proof}
	
\begin{remark}
The formulation under $\widetilde{\Q}$ allows the flow
estimates to be used in the likelihood argument in
Proposition~\ref{prop:param-likelihood} and in the uniform
BSDE derivative estimates in
Proposition~\ref{prop:uniform-parameter}.  The changes of measure involved there
	are induced by stochastic exponentials driven by $Y$,
	and hence preserve the $\mathbb F$-Brownian property of $W$.
\end{remark}

\subsection{Parameterized likelihood martingales}

For $(t,x)$, and $0\le s\le T$,  define 
\begin{equation}
 H_s^{t,x}:=\chi_s^t h(X_s^{t,x},u_s), 
 \label{eq:Htx}
\end{equation}
and define
\begin{equation}
 Z_s^{t,x}
 :=\cE\left(\int_0^{s}(H_a^{t,x})^\top\dd Y_a\right).
 \label{eq:Ztx}
\end{equation}

We next justify the change of measure induced by \(Z^{t,x}\). The proof is inspired by the localization and entropy argument of Klebaner and Liptser \cite{Klebaner}, we include the details to establish the required estimates in our setting with random controls satisfying the stopped energy bound \(\int_0^\theta |u_s|^2\,\mathrm ds\le k\).

\begin{proposition}
\label{prop:param-likelihood}
For every $(t,x)\in[0,T]\times\R^d$, $Z^{t,x}$ is a uniformly
integrable $\Q$-martingale on $[0,T]$. Consequently,
\begin{equation}
	\frac{\dd\widehat\Q^{t,x}}{\dd\Q}\bigg|_{\F_T}
	=Z_T^{t,x}=Z_\theta^{t,x}
	\label{eq:Qhat-tx}
\end{equation}
defines an equivalent probability measure.
Under $\widehat\Q^{t,x}$, $W$ and the process
\begin{equation}
	\widehat Y_s^{t,x}
	:=Y_s-\int_0^s H_a^{t,x} \dd a,
	\qquad 0\le s\le T,
	\label{eq:Yhat-tx}
\end{equation}
are independent $\mathbb F$-Brownian motions.
\end{proposition}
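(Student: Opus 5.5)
The plan is a localization and entropy argument in the style of Klebaner--Liptser. Since $\chi^t_s=0$ for $s>\theta$, we have $Z^{t,x}_s=Z^{t,x}_{s\wedge\theta}$, so $Z_T=Z_\theta$. As a positive local martingale, $Z^{t,x}$ is a supermartingale, and it suffices to show $\E^\Q Z^{t,x}_T=1$; uniform integrability on $[0,T]$ then follows.

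\textbf{Localization.} Introduce $\sigma_n:=\inf\{s:\int_0^s|H^{t,x}_a|^2\dd a\ge n\}\wedge T$. The stopped exponential $Z^n:=Z^{t,x}_{\cdot\wedge\sigma_n}$ satisfies Novikov's condition, so it is a true martingale and defines an equivalent probability $\Q^n$ with $\dd\Q^n=Z^n_T\dd\Q$. By Girsanov's theorem, under $\Q^n$ the process $\widehat Y^n_s=Y_s-\int_0^{s\wedge\sigma_n}H_a\dd a$ is an $\mathbb F$-Brownian motion. Since the density is driven by $Y$ only and $\langle W,Y\rangle=0$, $W$ remains an $\mathbb F$-Brownian motion under $\Q^n$. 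Lemma~\ref{lem:flow-estimates} therefore applies with $\widetilde\Q=\Q^n$.

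\textbf{Entropy bound.} We have
\[
\E^\Q[Z^n_T\log Z^n_T]=\E^{\Q^n}\Bigl[\int_0^{\sigma_n}H_a^\top\dd\widehat Y^n_a+\tfrac12\int_0^{\sigma_n}|H_a|^2\dd a\Bigr]=\tfrac12\E^{\Q^n}\int_0^{\sigma_n}|H_a|^2\dd a,
\]
because the stochastic integral has bounded quadratic variation and hence zero mean. By \eqref{eq:filtering-growth}, $|H_a|^2\le 3K^2\chi^t_a(1+|X^{t,x}_a|^2+|u_a|^2)$. Combining $\int_0^\theta|u_a|^2\dd a\le k$ almost surely with \eqref{eq:flow-state-moment} under $\Q^n$ gives
\[
\E^\Q[Z^n_T\log Z^n_T]\le C_k\br{x}^2,
\]
uniformly in $n$. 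The key point is that the constant in \eqref{eq:flow-state-moment} does not depend on the particular equivalent measure, only on $W$ being Brownian under it. This is exactly why the lemma was formulated for a general $\widetilde\Q$.

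\textbf{Passage to the limit.} By de la Vallée-Poussin, $\{Z^n_T\}_n$ is uniformly integrable under $\Q$. Since $\int_0^T|H_a|^2\dd a<\infty$ almost surely (by continuity of $X$ and $\int|u|^2<\infty$), we have $\sigma_n\uparrow T$ eventually and $Z^n_T\to Z^{t,x}_T$ almost surely. Hence $\E^\Q Z^{t,x}_T=\lim_n\E^\Q Z^n_T=1$. Thus $Z^{t,x}$ is a uniformly integrable martingale with $Z_T>0$, and $\widehat\Q^{t,x}$ is equivalent to $\Q$.

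\textbf{Girsanov identification.} Girsanov's theorem with density $Z^{t,x}$ shows that $\widehat Y^{t,x}$ is a continuous $\widehat\Q^{t,x}$-local martingale with quadratic variation $s\,\mathrm{Id}$. Since $W$ has zero covariation with $Y$, $W$ is unchanged and remains a local martingale with quadratic variation $s\,\mathrm{Id}$, and $\langle W,\widehat Y\rangle=0$. Lévy's characterization then makes $(W,\widehat Y^{t,x})$ a $d$-dimensional $\mathbb F$-Brownian motion of dimension $r+m$, which gives the independence of the two components.

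The main obstacle is the entropy step. The required moment bound on $X^{t,x}$ must hold under the changed measures $\Q^n$ rather than under $\Q$, and this hinges on $W$ staying Brownian and on the energy bound $\int_0^\theta|u|^2\le k$ holding almost surely, so that it is unaffected by the measure change.
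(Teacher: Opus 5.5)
Your proposal is correct and follows essentially the same route as the paper. You localize at the times where $\int_0^s|H_a^{t,x}|^2\dd a$ reaches $n$, apply Lemma~\ref{lem:flow-estimates} under the localized Girsanov measures (where $W$ stays Brownian and the almost sure energy bound $\int_0^\theta|u_s|^2\dd s\le k$ persists) to get the uniform entropy bound $\E^\Q[Z^n_T\log Z^n_T]\le C_k\br{x}^2$, obtain uniform integrability from this $L\log L$ bound, pass to the almost sure limit, and finish with Girsanov and L\'evy. Your closing step via ``nonnegative supermartingale with $\E^\Q Z_T^{t,x}=1$'' is only a slightly more explicit version of the paper's final step, and the phrase ``$d$-dimensional \ldots\ of dimension $r+m$'' is a harmless typo.
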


\begin{proof}
Let
\[
 \rho_j^{t,x}
 :=\inf\left\{s\in[0,T]:\int_0^s|H_a^{t,x}|^2\dd a\ge j\right\}\wedge T.
\]
The exponential \eqref{eq:Ztx} stopped at $\rho_j^{t,x}$ is a true martingale.  Under the
probability $\widehat\Q_j^{t,x}$ on $\F_T$ with density
$Z_{\rho_j^{t,x}}^{t,x}$, the process
\[
 Y_s-\int_0^{s\wedge\rho_j^{t,x}}H_a^{t,x}\dd a,
 \qquad 0\le s\le T,
\]
is an Brownian motion independent of $W$.   Applying Lemma 
\ref{lem:flow-estimates}  gives
\[
 \sup_j\E^{\widehat\Q_j^{t,x}}
     \sup_{t\le s\le T}|X_s^{t,x}|^2
 \le C_k\br{x}^2.
\]
Therefore, 
\begin{equation}
 \sup_j\E^{\widehat\Q_j^{t,x}}
     \int_0^{\rho_j^{t,x}}|H_s^{t,x}|^2\dd s
 \le C_k\br{x}^2.
 \label{eq:param-H-entropy-bound}
\end{equation}
Applying It\^o's formula to $\log Z_{\rho_j^{t,x}}^{t,x}$, 
then, we have
\begin{equation}
 \E^\Q\left[Z_{\rho_j^{t,x}}^{t,x}
             \log Z_{\rho_j^{t,x}}^{t,x}\right]
 =\frac12\E^{\widehat\Q_j^{t,x}}
      \int_0^{\rho_j^{t,x}}|H_s^{t,x}|^2\dd s
 \le C_k\br{x}^2.
 \label{eq:param-entropy}
\end{equation}

 By \eqref{eq:param-entropy}
and $r\log^+r\le r\log r+e^{-1}$ with $r >0$, where
$\log^+r:=\max\{\log r,0\}$, for every $R>1$, and as $R\to\infty$, we have 
\[
\sup_j\E^\Q\!\left[Z_{\rho_j^{t,x}}^{t,x}\one_{\{Z_{\rho_j^{t,x}}^{t,x}>R\}}\right]
\le \frac{1}{\log R}\sup_j\E^\Q[Z_{\rho_j^{t,x}}^{t,x}\log^+Z_{\rho_j^{t,x}}^{t,x}]
\le \frac{C_k\br{x}^2+e^{-1}}{\log R}
\longrightarrow0.
\]

Thus, $\{Z_{\rho_j^{t,x}}^{t,x}\}_{j\ge1}$ is uniformly integrable, and $Z_{\rho_j^{t,x}}^{t,x}\to Z_T^{t,x}$ a.s., which implies that  $Z^{t,x}$ is a uniformly
integrable $\Q$-martingale on $[0,T]$.
Therefore,
\eqref{eq:Qhat-tx} defines an equivalent probability measure.
Note that, $Z_T^{t,x}=Z_\theta^{t,x}$ because $H^{t,x}=0$ after $\theta$. Then, 
Girsanov's theorem applied to $(W,Y)$  shows that
$W$ and $\widehat Y^{t,x}$ are independent $\mathbb F$-Brownian
motions under $\widehat\Q^{t,x}$. This completes the proof. 

\end{proof}

\begin{remark}
	The stopping factor $\chi^t$ in \eqref{eq:Htx} is crucial
	for the martingale argument under the present assumptions.
	Since $\theta\le\tau_k$,
	$
	\int_0^T\chi_s^t|u_s|^2\,\mathrm{d}s\le k,
 \Q\text{-a.s.}
	$
	This bound remains valid under the localized equivalent
	measures and, together with Lemma~\ref{lem:flow-estimates},
	yields \eqref{eq:param-H-entropy-bound} and the uniform
	entropy estimate \eqref{eq:param-entropy}.
	Without this stopping, almost sure finiteness of
	$\int_0^T|u_s|^2\,\mathrm{d}s$ alone does not guarantee
	that the stochastic exponential is a true martingale.
\end{remark}

\subsection{The parameterized BSDE with unbounded coefficient}

For deterministic $(t,x)\in[0,T]\times\R^d$, let
$H^{t,x}$ be defined by \eqref{eq:Htx} and define
$
\Gamma^{t,x}:=\gamma(X_\theta^{t,x}).
$

We work  with the BSDE:
\begin{equation}
\left\{
\begin{aligned}
\dd F_s^{t,x}
&=-(H_s^{t,x})^\top G_s^{t,x}\dd s
+(G_s^{t,x})^\top\dd Y_s
+(\widetilde G_s^{t,x})^\top\dd W_s,
&&t\le s\le T,\\
F_T^{t,x}&=\Gamma^{t,x}.
\end{aligned}
\right.
\label{eq:untruncated-BSDE}
\end{equation}
Although $H^{t,x}$ may be unbounded, the likelihood martingale
in Proposition~\ref{prop:param-likelihood} removes its contribution
to the drift under an equivalent probability measure. We shall not
assume any exponential moment of $\int_t^T|H_s^{t,x}|^2\dd s$.

We first record an energy estimate whose use does not presuppose
integrability of the martingale integrands.
\begin{lemma}[Localized energy estimate]
\label{lem:localized-BSDE-energy}
Let $\tilde {\mathbb{Q}}$ be a probability measure under which $(B^1,B^2)$ is a
Brownian motion. Suppose that $U$ is continuous and
\[
\dd U_s=-(a_s^\top V_s+R_s)\dd s
+V_s^\top\dd B_s^1+\widetilde V_s^\top\dd B_s^2,
\]
where the integrands are locally square-integrable and the drift
is locally integrable. Then, for $q\ge2$ and a stopping time
$t\le\rho\le T$,
\begin{align}
&\E^{\tilde {\mathbb{Q}}}\left[
\left(\int_\rho^T(|V_s|^2+|\widetilde V_s|^2)\dd s\right)^{q/2}
\middle|\F_\rho\right]\notag\\
&\quad\le C_q\E^\mu\left[
\sup_{\rho\le s\le T}|U_s|^q
\left(1+\int_\rho^T|a_s|^2\dd s\right)^{q/2}
+\left(\int_\rho^T|R_s|\dd s\right)^q
\middle|\F_\rho\right],
\label{eq:localized-BSDE-energy}
\end{align}
provided the right-hand side is finite.
\end{lemma}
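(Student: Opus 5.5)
The natural route is Itô's formula for $|U_s|^2$ combined with a localization that avoids assuming the martingale integrands are integrable. (The $\E^\mu$ on the right of \eqref{eq:localized-BSDE-energy} is read as $\E^{\tilde{\mathbb Q}}$.) Write $N_s:=\int_\rho^s(|V_r|^2+|\widetilde V_r|^2)\dd r$ and $S:=\sup_{\rho\le s\le T}|U_s|$. For $n\in\N$ set
\[
\sigma_n:=\inf\Bigl\{s\ge\rho:\ N_s+\int_\rho^s|a_r|^2\dd r+\int_\rho^s|R_r|\dd r\ge n\Bigr\}\wedge T .
\]
Because the integrands are locally square-integrable and the drift is locally integrable, $\sigma_n\uparrow T$ almost surely.

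Itô's formula on $[\rho,\sigma_n]$ gives
\[
|U_\rho|^2+N_{\sigma_n}=|U_{\sigma_n}|^2+2\int_\rho^{\sigma_n}U_s(a_s^\top V_s+R_s)\dd s-2\int_\rho^{\sigma_n}U_s\bigl(V_s^\top\dd B_s^1+\widetilde V_s^\top\dd B_s^2\bigr).
\]
We bound the two drift contributions separately:
\[
2|U_s||a_s||V_s|\le 2S^2|a_s|^2+\tfrac12|V_s|^2,\qquad
2\int|U_s||R_s|\dd s\le S^2+\Bigl(\int_\rho^T|R_s|\dd s\Bigr)^2 .
\]
Absorbing the $\tfrac12|V_s|^2$ term into the left-hand side yields
\[
N_{\sigma_n}\le C\Bigl[S^2\Bigl(1+\int_\rho^T|a_s|^2\dd s\Bigr)+\Bigl(\int_\rho^T|R_s|\dd s\Bigr)^2\Bigr]+4\sup_{\rho\le s\le\sigma_n}|M_s|,
\]
where $M_s:=\int_\rho^{s}U_r\bigl(V_r^\top\dd B_r^1+\widetilde V_r^\top\dd B_r^2\bigr)$.

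Next we raise this to the power $q/2$ and take conditional expectation given $\F_\rho$. The conditional BDG inequality for $M$ stopped at $\sigma_n$ gives
\[
\E^{\tilde{\mathbb Q}}\Bigl[\sup_{\rho\le s\le\sigma_n}|M_s|^{q/2}\,\Big|\,\F_\rho\Bigr]\le C_q\,\E^{\tilde{\mathbb Q}}\bigl[S^{q/2}N_{\sigma_n}^{q/4}\,\big|\,\F_\rho\bigr]\le \varepsilon\,\E^{\tilde{\mathbb Q}}\bigl[N_{\sigma_n}^{q/2}\,\big|\,\F_\rho\bigr]+C_{q,\varepsilon}\,\E^{\tilde{\mathbb Q}}\bigl[S^q\,\big|\,\F_\rho\bigr],
\]
where the second step is Young's inequality. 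The term $S^q$ is dominated by the right-hand side of \eqref{eq:localized-BSDE-energy}.

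The step that needs care is this absorption of $\varepsilon\,\E[N_{\sigma_n}^{q/2}\mid\F_\rho]$ into the left-hand side, since it requires that quantity to be finite. This holds because $N_{\sigma_n}\le n$ by the choice of $\sigma_n$. To keep the conditioning rigorous, I would carry out the argument on events $A\in\F_\rho$: multiply by $\one_A$ and take plain expectations, using that $\rho$ is a stopping time and BDG applies to the stopped martingale started at $\rho$. An alternative is to use the localization $\one_{\{\text{RHS}_\rho\le m\}}$, which is $\F_\rho$-measurable, so that unconditional finiteness holds; then let $m\to\infty$.

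Finally, after absorbing with a small $\varepsilon$, we obtain for every $n$
\[
\E^{\tilde{\mathbb Q}}\bigl[N_{\sigma_n}^{q/2}\,\big|\,\F_\rho\bigr]\le C_q\,\E^{\tilde{\mathbb Q}}\Bigl[S^q\Bigl(1+\int_\rho^T|a_s|^2\dd s\Bigr)^{q/2}+\Bigl(\int_\rho^T|R_s|\dd s\Bigr)^q\,\Big|\,\F_\rho\Bigr],
\]
with $C_q$ independent of $n$. Since $N_{\sigma_n}\uparrow N_T$ as $n\to\infty$, the conditional monotone convergence theorem gives \eqref{eq:localized-BSDE-energy}.
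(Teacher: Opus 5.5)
Your proposal is correct and is essentially the paper's argument: localize so that the stopped quadratic variation is bounded, apply It\^o's formula to $|U|^2$ with Cauchy--Schwarz and Young, absorb the stochastic integral via conditional BDG and Young at exponent $q/2$, and then remove the localization uniformly in the level (your conditional monotone convergence plays the role of the paper's conditional Fatou, and reading $\E^\mu$ as $\E^{\tilde{\mathbb Q}}$ is the intended meaning). One small tidy-up: the absorption only uses $N_{\sigma_n}\le n$, so you can drop $\int_\rho^s|a_r|^2\dd r$ from the stopping rule, because its almost sure finiteness (which you need for $\sigma_n\uparrow T$) is not among the hypotheses, although it does hold on $\{\sup_s|U_s|>0\}$ once the right-hand side is finite.
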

\begin{proof}
Stop all terms when their quadratic variations and variation exceed
a deterministic level. It\^o's formula for $|U|^2$, followed by
Cauchy--Schwarz and Young's inequalities, controls the stopped
quadratic variation by
\[
C\sup_{\rho\le s\le T}|U_s|^2
\left(1+\int_\rho^T|a_s|^2\dd s\right)
+C\sup_{\rho\le s\le T}|U_s|\int_\rho^T|R_s|\dd s
\]
and a stochastic integral. Conditional BDG and Young's inequalities
at exponent $q/2$ absorb the contribution of that stochastic
integral into the left-hand side. The resulting bound is uniform
in the localization level. Conditional Fatou's lemma gives
\eqref{eq:localized-BSDE-energy}.
\end{proof}

\begin{proposition}[Well-posedness and representation]
\label{prop:density-stability}
For every deterministic $(t,x)$, \eqref{eq:untruncated-BSDE} has a
unique solution with bounded first component and integrands in
$\mathbb H^2(\R^m)\times\mathbb H^2(\R^r)$. It satisfies
\begin{equation}
F_s^{t,x}
=\frac{\E^\Q[Z_T^{t,x}\Gamma^{t,x}\mid\F_s]}{Z_s^{t,x}}
=\E^{\widehat\Q^{t,x}}[\Gamma^{t,x}\mid\F_s],
\qquad |F_s^{t,x}|\le\|\gamma\|_\infty,
\label{eq:F-representation}
\end{equation}
and, for every finite $q\ge2$,
\begin{equation}
\E^\Q\left(\int_t^T
(|G_s^{t,x}|^2+|\widetilde G_s^{t,x}|^2)\dd s\right)^{q/2}
\le C_{k,q}\br{x}^q.
\label{eq:zero-order-G-moment}
\end{equation}
On $A_t$, the solution is constant after $\theta$, with zero
martingale integrands. On $A_t^c$,
\begin{equation}
\one_{A_t^c}F_s^{t,x}=\one_{A_t^c}\gamma(x),\qquad
\one_{A_t^c}G_s^{t,x}
=\one_{A_t^c}\widetilde G_s^{t,x}=0,
\qquad t\le s\le T.
\label{eq:BSDE-off-active-event}
\end{equation}
\end{proposition}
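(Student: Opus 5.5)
Existence will come from the likelihood of Proposition~\ref{prop:param-likelihood}. Set
\[
F_s:=\E^{\widehat\Q^{t,x}}[\Gamma^{t,x}\mid\F_s]=\frac{M_s}{Z_s^{t,x}},
\qquad
M_s:=\E^\Q[Z_T^{t,x}\Gamma^{t,x}\mid\F_s].
\]
The equality of the two expressions is the abstract Bayes formula, valid because $Z^{t,x}$ is a uniformly integrable martingale. The bound $|F_s|\le\|\gamma\|_\infty$ is then immediate. For the dynamics we apply the predictable representation property of $\mathbb F$ with respect to $(W,Y)$, but under $\widehat\Q^{t,x}$, where $(W,\widehat Y^{t,x})$ is an $\mathbb F$-Brownian motion by Proposition~\ref{prop:param-likelihood}. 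Since the density is strictly positive, $\mathbb F$ is also the augmented filtration generated by $(W,\widehat Y^{t,x})$ up to the equivalence of null sets, so PRP still holds. The bounded $\widehat\Q^{t,x}$-martingale $F$ therefore has the form
\[
F_s=F_t+\int_t^s G_a^\top\dd\widehat Y_a+\int_t^s\widetilde G_a^\top\dd W_a,
\]
with locally square-integrable integrands. Substituting $\dd\widehat Y=\dd Y-H^{t,x}\dd s$ gives exactly \eqref{eq:untruncated-BSDE} with $F_T=\Gamma^{t,x}$.

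For the moment bound \eqref{eq:zero-order-G-moment} we invoke Lemma~\ref{lem:localized-BSDE-energy} under $\Q$ with $a=H^{t,x}$, $R=0$ and $\rho=t$. This gives
\[
\E^\Q\Bigl(\int_t^T(|G_s|^2+|\widetilde G_s|^2)\dd s\Bigr)^{q/2}
\le C_q\|\gamma\|_\infty^q\,\E^\Q\Bigl(1+\int_t^T|H_s^{t,x}|^2\dd s\Bigr)^{q/2}.
\]
The right-hand side is controlled using $|H_s^{t,x}|\le K\chi_s^t(1+|X_s^{t,x}|+|u_s|)$, the stopped energy bound $\int\chi^t|u|^2\le k$, and \eqref{eq:flow-state-moment} with $\widetilde\Q=\Q$. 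The result is a bound by $C_{k,q}\br{x}^q$, and in particular the integrands lie in $\mathbb H^2$.

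Next come the structural statements. Since $\chi_s^t=0$ for $s>\theta$, the flow $X^{t,x}$ is frozen after $\theta$ and $H^{t,x}=0$ there. Hence $\Gamma^{t,x}=\gamma(X^{t,x}_\theta)$ is $\F_\theta$-measurable, and on $\{\theta\le s\}$ we get $F_s=\E^{\widehat\Q}[\Gamma\mid\F_s]=\Gamma$. So $F$ is constant after $\theta$, and its quadratic variation $\int(|G|^2+|\widetilde G|^2)$ vanishes on $(\theta,T]$, which forces the integrands to be zero $\dd s\otimes\dd\Q$-a.e. there. On $A_t^c=\{\theta<t\}$ we have $\chi^t\equiv0$, so $X^{t,x}\equiv x$ and $\Gamma=\gamma(x)$. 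Because $A_t^c\in\F_t$, we get $\one_{A_t^c}F_s=\one_{A_t^c}\gamma(x)$ for $s\ge t$, and the same quadratic-variation argument gives \eqref{eq:BSDE-off-active-event}.

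Uniqueness is the main point requiring care, because $H$ is unbounded and one cannot run a Gronwall argument under $\Q$. Take two solutions in the stated class and let $(\delta F,\delta G,\delta\widetilde G)$ be their difference. Under $\widehat\Q^{t,x}$, Girsanov rewrites the equation as $\dd\,\delta F=\delta G^\top\dd\widehat Y+\delta\widetilde G^\top\dd W$, so $\delta F$ is a $\widehat\Q^{t,x}$-local martingale with $\delta F_T=0$. It is bounded because both first components are bounded, and the stochastic integrals remain local martingales under the equivalent measure. A bounded local martingale is a true martingale, so $\delta F\equiv0$. Then the quadratic variation of $\delta F$ vanishes, which gives $\delta G=\delta\widetilde G=0$ a.e. The only delicate point is that the stochastic integrals are genuine local martingales under $\widehat\Q^{t,x}$. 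This holds because $\mathbb H^2$ integrands are a.s. locally square-integrable, and that property is preserved under equivalent measures.
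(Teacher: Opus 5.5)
Your proposal follows the paper's proof in every step except the construction of the integrands, and there the justification you give is false. The paper's steps are: the Bayes representation, Lemma~\ref{lem:localized-BSDE-energy} with $a=H^{t,x}$ and $R=0$ together with the growth, energy and flow bounds, conditional expectation and quadratic variation after $\theta$ and on $A_t^c$, and uniqueness via a bounded $\widehat\Q^{t,x}$-local martingale with zero terminal value. All of these match yours.

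The faulty step is your claim that $\mathbb F$ is the augmented filtration of $(W,\widehat Y^{t,x})$. Strict positivity of $Z_T^{t,x}$ only means that $\widehat\Q^{t,x}$ and $\Q$ have the same null sets. It says nothing about which $\sigma$-fields $(W,\widehat Y^{t,x})$ generates. The inclusion of the augmented filtration $\mathbb F^{W,\widehat Y}$ of $(W,\widehat Y^{t,x})$ in $\mathbb F$ always holds. Equality, however, requires recovering $Y$ from $(W,\widehat Y)$, i.e.\ strong solvability of $Y=\widehat Y+\int_0^\cdot\chi_a^t h(X_a^{t,x},u_a)\dd a$, where $u$ is an arbitrary $\mathbb F^Y$-predictable path functional. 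This is the innovations problem, and it fails in the present generality. Take $m=p=1$, $h(x,v)=v$, $t=0$, $k>T$ (so that $\theta=\tau_k=T$), and let $u$ be Tsirelson's bounded predictable drift functional of $Y$. Under $\widehat\Q^{0,x}$, $Y$ is then a weak but non-strong solution of Tsirelson's equation driven by $\widehat Y$. Since $W$ remains independent of $\mathcal F^Y_T$ under this measure, $Y$ is not adapted to the filtration of $(W,\widehat Y)$.

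The conclusion you need still holds, because predictable representation is invariant under an equivalent change of measure. But the proof of that invariance is exactly the computation the paper performs, and you should substitute it for your sentence. Represent the $\Q$-martingale $M_s=\E^\Q[Z_T^{t,x}\Gamma^{t,x}\mid\F_s]=Z_s^{t,x}F_s$ under $\Q$ as $M_0+\int_0^s(K^Y_a)^\top\dd Y_a+\int_0^s(K^W_a)^\top\dd W_a$, with locally square-integrable predictable $K^Y$ and $K^W$. Then apply It\^o's formula to $M/Z^{t,x}$, using $\dd Z^{t,x}=Z^{t,x}(H^{t,x})^\top\dd Y$. This yields \eqref{eq:untruncated-BSDE} directly, with $G=K^Y/Z^{t,x}-FH^{t,x}$ and $\widetilde G=K^W/Z^{t,x}$, and never requires identifying $\mathbb F$ with the filtration of the innovation process. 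With this repair, the rest of your argument is sound and coincides with the paper's.
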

\begin{proof}
Fix $(t,x)$ and suppress these indices. By
Proposition~\ref{prop:param-likelihood}, $Z$ is strictly positive
and uniformly integrable. The continuous uniformly integrable
martingale
\[
M_s:=\E^\Q[Z_T\Gamma\mid\F_s],\qquad t\le s\le T,
\]
has a Brownian representation with predictable locally
square-integrable integrands $K^Y,K^W$. Define
\begin{equation}
F_s:=M_s/Z_s,\qquad
G_s:=K_s^Y/Z_s-F_sH_s,\qquad
\widetilde G_s:=K_s^W/Z_s.
\label{eq:limit-integrands-definition}
\end{equation}
Bayes' formula gives \eqref{eq:F-representation}, and It\^o's
formula for $M/Z$ gives \eqref{eq:untruncated-BSDE} with locally
square-integrable integrands. The growth assumption, the stopped
control-energy bound, and Lemma~\ref{lem:flow-estimates} give, for
every finite $p\ge1$,
\begin{equation}
\E^\Q\left(1+\int_t^T|H_s|^2\dd s\right)^p
\le C_{k,p}\br{x}^{2p}.
\label{eq:H-square-integrable}
\end{equation}
Apply Lemma~\ref{lem:localized-BSDE-energy} with $a=H$, $R=0$,
and use the bound on $F$. This proves
\eqref{eq:zero-order-G-moment}, in particular membership of the
integrands in $\mathbb H^2$.

For uniqueness, the difference of two solutions is a bounded
local martingale under $\widehat\Q^{t,x}$, driven by
$(\widehat Y^{t,x},W)$ and having terminal value zero. It is a
true martingale and thus vanishes identically. Its quadratic
variation then implies equality of the integrands, first
$\widehat\Q^{t,x}$-almost surely and hence also $\Q$-almost surely.
The same conditional representation and quadratic-variation
argument prove the assertions after $\theta$ and on $A_t^c$.
\end{proof}

\subsection{Spatial parameter regularity and estimates}

The next two conditional estimates will also be used when the
initial spatial parameter is varied while the probability measure
is kept fixed.
\begin{lemma}
\label{lem:conditional-linear-BSDE}
Let $B$ be Brownian under $\mu$ and have the martingale
representation property in the given filtration. If $q\ge2$,
$\eta\in L^q(\mu)$, and $\int_t^T|R_s|\dd s\in L^q(\mu)$,
the martingale representation of
$\E^\mu[\eta+\int_t^T R_s\dd s\mid\F_s]$ defines the solution
of
\[
\dd U_s=-R_s\dd s+Z_s\dd B_s,\qquad U_T=\eta.
\]
For every stopping time $t\le\rho\le T$,
\begin{align}
|U_\rho|^q
+\E^\mu\left[
\left(\int_\rho^T|Z_s|^2\dd s\right)^{q/2}
\middle|\F_\rho\right]
&\le C_q\E^\mu\left[
|\eta|^q+\left(\int_\rho^T|R_s|\dd s\right)^q
\middle|\F_\rho\right].
\label{eq:conditional-linear-BSDE}
\end{align}
The corresponding $\mathbb S^q\times\mathbb H^q$ stability
estimate holds for differences of terminal data and sources.
\end{lemma}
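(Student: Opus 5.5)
The statement to prove is Lemma~\ref{lem:conditional-linear-BSDE}. The plan is to write the solution explicitly from the martingale representation and then read off both estimates from conditional expectations and conditional BDG.

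First, existence. Set
\[
N_s:=\E^\mu\Big[\eta+\int_t^TR_a\dd a\,\Big|\,\F_s\Big],\qquad t\le s\le T .
\]
Since $\eta$ and $\int_t^T|R_a|\dd a$ lie in $L^q(\mu)$, $N$ is an $L^q$-martingale. The martingale representation property of $B$ gives a predictable $Z$ with $N_s=N_t+\int_t^sZ_a\dd B_a$. We then define
\[
U_s:=N_s-\int_t^sR_a\dd a .
\]
This gives $U_T=\eta$ and $\dd U_s=-R_s\dd s+Z_s\dd B_s$. It also gives the identity
\[
U_s=\E^\mu\Big[\eta+\int_s^TR_a\dd a\,\Big|\,\F_s\Big].
\]
By Doob's inequality $U\in\mathbb S^q$, and by BDG $Z\in\mathbb H^q$.

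Next, the bound on $|U_\rho|^q$. Evaluate the identity at the stopping time $\rho$; optional sampling applies because $N$ is uniformly integrable. Conditional Jensen then gives
\[
|U_\rho|^q\le 2^{q-1}\,\E^\mu\Big[|\eta|^q+\Big(\int_\rho^T|R_s|\dd s\Big)^q\,\Big|\,\F_\rho\Big].
\]

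For the integrand term, write
\[
\int_\rho^TZ_s\dd B_s=\eta-U_\rho+\int_\rho^TR_s\dd s .
\]
Apply the conditional BDG inequality to the martingale $s\mapsto\int_\rho^{s\vee\rho}Z\dd B$, started at $\rho$. The conditional $q$-th moment of its supremum over $[\rho,T]$ is bounded using the same expression evaluated at intermediate times, namely $U_s-U_\rho+\int_\rho^sR$, together with Doob's inequality. This yields
\[
\E^\mu\Big[\Big(\int_\rho^T|Z_s|^2\dd s\Big)^{q/2}\,\Big|\,\F_\rho\Big]
\le C_q\,\E^\mu\Big[|\eta|^q+|U_\rho|^q+\Big(\int_\rho^T|R_s|\dd s\Big)^q\,\Big|\,\F_\rho\Big].
\]
The $|U_\rho|^q$ term is $\F_\rho$-measurable and has already been bounded, so substituting it gives \eqref{eq:conditional-linear-BSDE}. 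The only delicate point is using the conditional (rather than unconditional) form of BDG. To justify it, multiply by $\one_A$ for $A\in\F_\rho$ and apply the ordinary BDG inequality to $\one_A\int_\rho^{\cdot}Z\dd B$, which is again a martingale. Arbitrariness of $A$ then gives the conditional statement.

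Uniqueness and the stability estimate follow from linearity. Uniqueness holds within the class where $U\in\mathbb S^q$ and $Z\in\mathbb H^q$: the difference of two solutions has $\eta=0$ and $R=0$, so the estimate forces it to vanish. For stability, apply the same representation to the differences $\eta-\eta'$ and $R-R'$, take $\rho=t$ and unconditional expectations, and use Doob's inequality for the supremum. This gives
\[
\|U-U'\|_{\mathbb S^q}^q+\|Z-Z'\|_{\mathbb H^q}^q\le C_q\,\E^\mu\Big[|\eta-\eta'|^q+\Big(\int_t^T|R-R'|\dd s\Big)^q\Big].
\]
I expect no real obstacle here; the main thing to check is the conditional BDG step, handled by the $\one_A$ localization above.
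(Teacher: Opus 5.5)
Your proposal is correct and follows essentially the same route as the paper, whose proof consists of applying conditional Jensen, Doob, and BDG inequalities to the representing martingale $\E^\mu[\eta+\int_t^TR_a\dd a\mid\F_s]$. You supply the details the paper leaves implicit: the identity $U_\rho=\E^\mu[\eta+\int_\rho^TR_s\dd s\mid\F_\rho]$ via optional sampling, and the $\one_A$, $A\in\F_\rho$, justification of conditional BDG.
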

\begin{proof}
Conditional Jensen, Doob, and BDG inequalities applied to the
representing martingale give these estimates.
\end{proof}

\begin{lemma}
\label{lem:conditional-flow-polynomial}
Let $\mu$ be equivalent to $\Q$ and suppose that $W$ remains
Brownian under $\mu$. For $1\le j\le n_*$ and finite $q\ge2$,
there is a deterministic polynomial $P_{j,q}$ with nonnegative
coefficients such that
\begin{equation}
\E^\mu\left[
\sup_{\rho\le s\le T}\sum_{i=1}^j|D_x^iX_s^{t,x}|^q
\middle|\F_\rho\right]
\le P_{j,q}\left(1+\sum_{i=1}^j|D_x^iX_\rho^{t,x}|\right)
\label{eq:conditional-flow-polynomial}
\end{equation}
for every stopping time $t\le\rho\le T$. Moreover,
\begin{equation}
\E^\mu\left[
\sup_{\rho\le s\le T}|X_s^{t,z}-X_s^{t,x}|^q
\middle|\F_\rho\right]
\le C_q|X_\rho^{t,z}-X_\rho^{t,x}|^q.
\label{eq:conditional-flow-difference}
\end{equation}
The constants and polynomial coefficients depend only on the
coefficient derivative bounds, dimensions, $j,q,T$.
\end{lemma}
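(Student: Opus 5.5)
The plan is to exploit the strong Markov/flow property of $X^{t,x}$ after the stopping time $\rho$. On $[\rho,T]$ the processes $D_x^iX_s^{t,x}$ solve the linear variational SDEs from the proof of Lemma~\ref{lem:flow-estimates}. Their coefficients $B_s$, $C_s^\nu$ are bounded uniformly in $\omega$ and in the control by Assumption~\ref{ass:bspde-regularity}. Their forcing terms $R^{(i)}, S^{(i)}$ are polynomial in lower-order derivatives, as in \eqref{eq:flow-derivative-forcing}, with coefficients involving only derivatives of $b,\sigma$ of order $\ge1$. So no growth in $x$ or $u$ enters, which is why the polynomial $P_{j,q}$ can be deterministic. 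Since $W$ is Brownian under $\mu$ and the SDE is driven only by $W$, the conditional BDG inequality under $\mu$ given $\F_\rho$ applies.

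\textbf{Difference estimate.} I would do \eqref{eq:conditional-flow-difference} first, as it is simplest. Set $\Delta_s:=X_s^{t,z}-X_s^{t,x}$ for $s\ge\rho$. Then
\[
\dd\Delta_s=\chi_s^t\bigl(b(X^{t,z}_s,u_s)-b(X^{t,x}_s,u_s)\bigr)\dd s+\chi_s^t\bigl(\sigma(X^{t,z}_s,u_s)-\sigma(X^{t,x}_s,u_s)\bigr)\dd W_s .
\]
By the Lipschitz bound \eqref{eq:filtering-lipschitz}, both integrands are bounded by $K|\Delta_s|$. Localize with $\varsigma_n:=\inf\{s\ge\rho:|\Delta_s|\ge n\}$ and use conditional BDG and Hölder to get
\[
\E^\mu\Bigl[\sup_{\rho\le r\le s\wedge\varsigma_n}|\Delta_r|^q\Bigm|\F_\rho\Bigr]\le C_q|\Delta_\rho|^q+C_q\int_\rho^s\E^\mu\Bigl[\sup_{\rho\le r\le a\wedge\varsigma_n}|\Delta_r|^q\Bigm|\F_\rho\Bigr]\dd a .
\]
Formally the time variable runs from the stopping time $\rho$. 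To apply Gronwall rigorously, I would shift time via $s\mapsto \rho+s$ on $\{\rho<T\}$, or multiply by $\one_A$ for $A\in\F_\rho$ and take expectations. The localized quantity is bounded, so Gronwall applies, and then Fatou in $n$ gives the claim with a constant $C_q$ independent of $\mu$, $\rho$ and $\omega$.

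\textbf{Derivative estimate.} For \eqref{eq:conditional-flow-polynomial} I would induct on $j$. For $j=1$, the equation for $D_xX$ is linear with bounded coefficients, so the same argument gives $\E^\mu[\sup_{s\ge\rho}|D_xX_s|^q\mid\F_\rho]\le C_q|D_xX_\rho|^q$. For $j\ge2$, conditional BDG plus Gronwall yield
\[
\E^\mu\bigl[\sup|D^jX|^q\bigm|\F_\rho\bigr]\le C\Bigl(|D^jX_\rho|^q+\E^\mu\Bigl[\int_\rho^T\bigl(|R^{(j)}_s|^q+|S^{(j)}_s|^q\bigr)\dd s\Bigm|\F_\rho\Bigr]\Bigr).
\]
I would bound each product $\prod_a|D^{i_a}X|$ by conditional Hölder with exponents $q\ell$. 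The induction hypothesis at exponent $q\ell$ bounds each factor by $P_{i_a,q\ell}(1+\sum_{i<j}|D^iX_\rho|)^{1/\ell}$. The product of such powers of nonnegative-coefficient polynomials is dominated by a polynomial with nonnegative coefficients. To see this, bound $P^{1/\ell}\le 1+P$ and expand. Summing over $i\le j$ then closes the induction.

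\textbf{Main obstacle.} The main obstacle is technical: making the conditional Gronwall rigorous from a random starting time, and ensuring finiteness before absorption. I would handle it by localizing at a level $n$ and working on sets $A\in\F_\rho$ with $\sup_{i\le j}|D^iX_\rho|\le N$, after which everything is integrable. The a priori bounds from Lemma~\ref{lem:flow-estimates} under $\mu$ then justify removing the localization by Fatou.
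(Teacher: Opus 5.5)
Your proposal is correct and follows essentially the same route as the paper. Both restart the variational equations of Lemma~\ref{lem:flow-estimates}, and the Lipschitz difference equation, at the stopping time $\rho$ with $\F_\rho$-measurable initial data, then apply conditional BDG and Gronwall under $\mu$ with induction on the derivative order and conditional H\"older for the chain-rule forcing; your localization and time-shift (or $\one_A$, $A\in\F_\rho$) device just makes explicit the conditional Gronwall step the paper leaves implicit.
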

\begin{proof}
Apply the derivative equations used in
Lemma~\ref{lem:flow-estimates} on $[\rho,T]$, with their
$\F_\rho$-measurable initial values. Conditional BDG, Gronwall,
and induction on the derivative order give
\eqref{eq:conditional-flow-polynomial}. The same estimates for
the difference of two flow equations give
\eqref{eq:conditional-flow-difference}.
\end{proof}

\begin{proposition}[Strong parameter derivatives]
\label{prop:uniform-parameter}
For fixed $t$ and every finite $q\ge2$, the map
\[
x\longmapsto(F^{t,x},G^{t,x},\widetilde G^{t,x})
\]
is $n_*$ times continuously differentiable with values in
$\mathbb S^q\times\mathbb H^q(\R^m)\times\mathbb H^q(\R^r)$.
For $|\beta|\le n_*$,
\begin{align}
\E^\Q\sup_{t\le s\le T}|D_x^\beta F_s^{t,x}|^q
&\le C_{k,\beta,q},
\label{eq:uniform-F-parameter}\\
\E^\Q\left(\int_t^T
(|D_x^\beta G_s^{t,x}|^2
+|D_x^\beta\widetilde G_s^{t,x}|^2)\dd s\right)^{q/2}
&\le C_{k,\beta,q}\br{x}^q.
\label{eq:uniform-G-parameter}
\end{align}
The constants are independent of $t,x$. For fixed $t$, the
solution fields and their derivatives admit jointly measurable
versions in $(\omega,s,x)$; the integrand fields can be chosen
measurable with respect to the predictable sigma-field of
$\mathbb F$ times $\mathcal B(\R^d)$. These fields have the
stated derivatives as weak spatial derivatives and belong
locally to $H^{n_*}$ in space: for every $M<\infty$,
\begin{equation}
\E^\Q\left[
\sup_{t\le s\le T}\|F_s^{t,\cdot}\|_{H^{n_*}(B_M)}^2
+\int_t^T\left(\|G_s^{t,\cdot}\|_{H^{n_*}(B_M)}^2
+\|\widetilde G_s^{t,\cdot}\|_{H^{n_*}(B_M)}^2\right)\dd s
\right]<\infty.
\label{eq:local-parameter-Sobolev}
\end{equation}
Moreover, $F^{t,\cdot}$ has a version that is almost surely
continuous in $(s,x)$ on $[t,T]\times\R^d$, and
$|F_s^{t,x}|\le\|\gamma\|_\infty$ simultaneously for all
$(s,x)$ outside one null set.
\end{proposition}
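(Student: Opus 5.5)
The plan is to differentiate formally in $x$, solve the resulting linear BSDEs under an equivalent measure so that the unbounded drift $H^\top G$ disappears, bound them under $\Q$, and then justify the formal derivatives by difference quotients. Formally differentiating \eqref{eq:untruncated-BSDE} in $x$ shows that, for $|\beta|=j\le n_*$, the candidate $(F^\beta,G^\beta,\widetilde G^\beta)$ solves a linear BSDE with the same unbounded coefficient $H^{t,x}$:
\[
\dd F^\beta_s=-\Bigl((H_s^{t,x})^\top G^\beta_s+\sum_{0<\beta'\le\beta}\tbinom{\beta}{\beta'}(D^{\beta'}_xH_s^{t,x})^\top G^{\beta-\beta'}_s\Bigr)\dd s+(G^\beta_s)^\top\dd Y_s+(\widetilde G^\beta_s)^\top\dd W_s,
\]
with terminal value $D^\beta_x[\gamma(X_\theta^{t,x})]$. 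Here $D^{\beta'}_xH$ is a chain-rule polynomial in the bounded derivatives of $h$ and in $D^iX^{t,x}$, multiplied by $\chi^t$. I will proceed by induction on $j$, with $j=0$ given by Proposition~\ref{prop:density-stability}.

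To construct and estimate the $j$-th order solution, I pass to $\widehat\Q^{t,x}$ of Proposition~\ref{prop:param-likelihood}. There the equation reads $\dd F^\beta=-R^\beta\dd s+(G^\beta)^\top\dd\widehat Y+(\widetilde G^\beta)^\top\dd W$, where the source $R^\beta$ involves only lower-order integrands. The filtration $\mathbb F$ still has the representation property for $(\widehat Y,W)$, since $\widehat\Q\sim\Q$ and this is a Girsanov transform. Lemma~\ref{lem:conditional-linear-BSDE} then gives existence together with the conditional bound
\[
|F^\beta_\rho|^q\le C\,\E^{\widehat\Q}\Bigl[|D^\beta\Gamma|^q+\Bigl(\int_\rho^T|R^\beta|\dd s\Bigr)^q\Bigm|\F_\rho\Bigr].
\]
The terminal term is controlled by Lemma~\ref{lem:conditional-flow-polynomial}, since $\|\gamma\|_{n_*+1,\infty}<\infty$ and the flow-derivative polynomial has bounded $\widehat\Q$-moments by Lemma~\ref{lem:flow-estimates}. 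For the source I use Cauchy--Schwarz: $\int|D^{\beta'}H||G^{\beta-\beta'}|\le(\int|D^{\beta'}H|^2)^{1/2}(\int|G^{\beta-\beta'}|^2)^{1/2}$. The first factor is polynomial in flow derivatives, because derivatives of $h$ are bounded. The induction hypothesis, taken in conditional form on $[\rho,T]$, controls the second factor.

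The key point is that the induction should carry \emph{conditional} $\widehat\Q$-bounds of the form $\E^{\widehat\Q}[(\int_\rho^T|G^{\beta'}|^2)^{q/2}\mid\F_\rho]\le P(\text{flow derivatives at }\rho)$, rather than unconditional ones. These bounds make $F^\beta$ pathwise dominated by a polynomial in $D^iX_\rho$. I then return to $\Q$ through Lemma~\ref{lem:localized-BSDE-energy} with $a=H$, using \eqref{eq:H-square-integrable} and H\"older's inequality: $\sup|F^\beta|$ has all $\Q$-moments bounded by flow-derivative polynomials under $\Q$ (Lemma~\ref{lem:flow-estimates} with $\widetilde\Q=\Q$), which gives \eqref{eq:uniform-F-parameter} uniformly in $x$. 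The factor $(1+\int|H|^2)^{q/2}\lesssim\br{x}^q$ produces the $\br{x}^q$ in \eqref{eq:uniform-G-parameter}. I expect this transfer of $\widehat\Q$ estimates back to $\Q$ without any moment of $Z^{-1}$ or exponential moment of $\int|H|^2$ to be the main obstacle. If the conditional-polynomial route fails, my fallback is to bound $\sup|F^\beta|$ directly by $\sup_s P(D^iX_s)$ pathwise, via the conditional estimate at every stopping time $\rho$ together with a maximal argument.

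Identification as strong derivatives uses difference quotients $\Delta_\epsilon=(F^{t,x+\epsilon e_i}-F^{t,x})/\epsilon$ under the \emph{fixed} measure $\widehat\Q^{t,x}$. Under this measure the perturbed equation carries the extra drift $(H^{t,x+\epsilon e}-H^{t,x})^\top G^{t,x+\epsilon e}$, which is Lipschitz-controlled by \eqref{eq:conditional-flow-difference}. The stability part of Lemma~\ref{lem:conditional-linear-BSDE} gives convergence of $\Delta_\epsilon$ to the variational solution in $\widehat\Q$-$\mathbb S^{q}\times\mathbb H^{q}$ for every $q$. Convergence in $\Q$-probability, combined with uniform higher $\Q$-moments from the previous step, upgrades this by uniform integrability to convergence in $\Q$-$\mathbb S^q\times\mathbb H^q$. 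Continuity in $x$ follows the same way. Iterating over orders yields $C^{n_*}$ differentiability.

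The remaining regularity assertions follow from these estimates. Jointly measurable (and, for the integrands, predictable) versions come from the continuity of $x\mapsto$ solution in the Banach spaces $\mathbb S^q\times\mathbb H^q$. Integrating the pointwise bounds over $B_M$ gives \eqref{eq:local-parameter-Sobolev}, with the classical parameter derivatives serving as weak derivatives. Joint continuity of $F$ in $(s,x)$ follows from Kolmogorov's criterion in $x$ with values in $C([t,T])$, using the $\mathbb S^q$ Lipschitz bound from the $D_xF$ estimate with $q>d$. Finally, $|F^{t,x}_s|\le\|\gamma\|_\infty$ holds for rational $x$ by \eqref{eq:F-representation}, and continuity extends it simultaneously to all $(s,x)$.
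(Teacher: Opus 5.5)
Your architecture matches the paper's. You build variational candidates under $\widehat\Q^{t,x}$ by martingale representation, carry conditional polynomial envelopes in the flow derivatives through the induction, and transfer the $F^\beta$ bound to $\Q$ pathwise. You then use the localized energy estimate with $a=H$ for the integrands, and identify derivatives by difference quotients under the fixed measure $\widehat\Q^{t,x}$ followed by a uniform-integrability upgrade.

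The step that does not close as written is that upgrade. You appeal to ``uniform higher $\Q$-moments from the previous step,'' but that step bounds the candidates $(F^\beta,G^\beta,\widetilde G^\beta)$. It does not bound the quotients $\Delta_\varepsilon F^\alpha$, $\Delta_\varepsilon G^\alpha$, $\Delta_\varepsilon\widetilde G^\alpha$. Controlling the quotients by the candidates would require $F^\alpha(x+\varepsilon e_i)-F^\alpha(x)=\varepsilon\int_0^1F^{\alpha+e_i}(x+\lambda\varepsilon e_i)\dd\lambda$, which is exactly what is being proved. Convergence in $\Q$-probability alone, even towards a limit continuous in $x$ in every $\mathbb S^q$ norm, gives neither $\mathbb S^q(\Q)$ convergence nor the fundamental theorem of calculus. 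For instance, in $L^0([0,1])$ the map $t\mapsto\one_{[0,t]}$ has zero derivative everywhere but is not constant.

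The same problem already appears inside your stability argument under $\widehat\Q^{t,x}$. The source of the quotient equation contains the lower-order quotients $\Delta_\varepsilon G^{\alpha-\eta}$. The induction hypothesis makes these converge only in $\Q$-norms, whereas you need them in some $\mathbb H^{p'}(\widehat\Q^{t,x})$. Since no moments of $Z^{t,x}$ or $1/Z^{t,x}$ are available, that transfer also requires uniform moment bounds under the new measure.

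What you need is an independent, $\varepsilon$-uniform envelope for the quotients. The paper obtains it in four steps.
\begin{enumerate}
\item It writes the lower-order quotients via \eqref{eq:lower-order-FTC}, which is legitimate because those orders are already identified. It bounds them under $\widehat\Q^{t,x}$ by the fixed-measure moments \eqref{eq:fixed-measure-candidate-moments} along the parameter segment.
\item It applies Lemma~\ref{lem:conditional-linear-BSDE} to the quotient equation \eqref{eq:variational-difference-quotient} under $\widehat\Q^{t,x}$. The factor $1/\varepsilon$ in $\Delta_\varepsilon H$ is absorbed using \eqref{eq:conditional-flow-difference} together with $|X^{t,y}_\rho-X^{t,x}_\rho|\le|\varepsilon|\int_0^1|D_xX^{t,x+\lambda\varepsilon e_i}_\rho|\dd\lambda$. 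This gives the pathwise bound \eqref{eq:difference-quotient-polynomial-envelope} with no $\varepsilon$-dependent constant.
\item That bound transfers to $\Q$ through a countable dense set of times and Lemma~\ref{lem:flow-estimates}, yielding \eqref{eq:difference-quotient-F-moment}.
\item A further application of Lemma~\ref{lem:localized-BSDE-energy} under $\Q$, where the unbounded drift $(H^{t,x})^\top\Delta_\varepsilon G^\alpha$ reappears, yields \eqref{eq:difference-quotient-G-moment}.
\end{enumerate}
With these bounds at an exponent $p>q$, your Vitali step goes through. The remaining parts of your proposal match the paper: continuity of the candidates, measurable and predictable versions, weak derivatives, \eqref{eq:local-parameter-Sobolev}, Kolmogorov's criterion, and the uniform bound on $F$.
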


\begin{proof}
Fix $t$ and write $X^x=X^{t,x}$, $H^x=H^{t,x}$,
$\Gamma^x=\Gamma^{t,x}$, and $\mu_x=\widehat\Q^{t,x}$.
We first construct solutions of the variational equations and
estimate them independently; only afterwards do we identify
them as parameter derivatives. Throughout the proof, a
polynomial $P$ has deterministic nonnegative coefficients
depending only on the displayed derivative orders and moment
exponents and on the fixed coefficient bounds.

The chain rule, bounded positive-order derivatives of $h$ and
$\gamma$, and the flow estimates imply, for $1\le|\eta|\le n_*$
and finite $p\ge1$,
\begin{equation}
\E^\mu\left(\int_t^T|D_x^\eta H_s^x|^2\dd s\right)^p
+\E^\mu|D_x^\eta\Gamma^x|^{2p}
\le C_{k,\eta,p}
\label{eq:BSDE-data-derivative-moments}
\end{equation}
under every equivalent measure $\mu$ preserving the Brownian
property of $W$. Conditional versions are bounded by a
polynomial in
\[
\mathfrak D_j^x(\rho):=1+\sum_{i=1}^j|D_x^iX_\rho^x|.
\]
For $H$ and $\Gamma$ themselves, the corresponding conditional
derivative statements below use only positive-order derivatives;
no moment of the likelihood density is involved.

\emph{Construction and conditional estimates.}
Set $R^0=0$ and
\[
 (U^0(x),V^0(x),\widetilde V^0(x))
 =(F^{t,x},G^{t,x},\widetilde G^{t,x}).
\]
Under $\mu_x$, $U^0(x)$
is a bounded martingale, so its integrands have all finite
$\mathbb H^p(\mu_x)$ moments. Suppose candidates of order
less than $j$ have been constructed. For $|\beta|=j$, put
\begin{equation}
\Gamma^\beta(x):=D_x^\beta\Gamma^x,\qquad
R_s^\beta(x):=\sum_{0<\eta\le\beta}\binom\beta\eta
(D_x^\eta H_s^x)^\top V_s^{\beta-\eta}(x).
\label{eq:R-beta}
\end{equation}
Here $\eta\le\beta$ is componentwise and
$\binom\beta\eta=\prod_i\binom{\beta_i}{\eta_i}$.
The lower-order candidates and
\eqref{eq:BSDE-data-derivative-moments} show, by
Cauchy--Schwarz at higher moment exponents, that
$\Gamma^\beta(x)$ and $\int_t^T|R_s^\beta(x)|\dd s$ have
every finite moment under $\mu_x$. The Brownian martingale
representation of
\[
\E^{\mu_x}\left[
\Gamma^\beta(x)+\int_t^T R_s^\beta(x)\dd s\middle|\F_s\right]
\]
therefore defines $(U^\beta,V^\beta,\widetilde V^\beta)$
uniquely in every $\mathbb S^p(\mu_x)\times\mathbb H^p(\mu_x)
\times\mathbb H^p(\mu_x)$, with
\begin{equation}
\dd U_s^\beta(x)=-R_s^\beta(x)\dd s
+V_s^\beta(x)^\top\dd\widehat Y_s^{t,x}
+\widetilde V_s^\beta(x)^\top\dd W_s,
\qquad U_T^\beta(x)=\Gamma^\beta(x).
\label{eq:derivative-under-Qhat}
\end{equation}

Lemma~\ref{lem:conditional-linear-BSDE} and induction yield
\begin{equation}
\begin{aligned}
|U_\rho^\beta(x)|^p
&+\E^{\mu_x}\left[
\left(\int_\rho^T(|V_s^\beta(x)|^2
+|\widetilde V_s^\beta(x)|^2)\dd s\right)^{p/2}
\middle|\F_\rho\right]\\
&\le P_{\beta,p}(\mathfrak D_{|\beta|}^x(\rho)).
\end{aligned}
\label{eq:closed-conditional-induction}
\end{equation}
For $\beta=0$ this is the bounded-martingale estimate, with a
constant polynomial. For the induction step, conditional
Cauchy--Schwarz gives
\begin{align*}
&\E^{\mu_x}\left[\left(\int_\rho^T|R_s^\beta(x)|\dd s\right)^p
\middle|\F_\rho\right]\\
&\quad\le C_{\beta,p}\sum_{0<\eta\le\beta}
\left\{\E^{\mu_x}\left[
\left(\int_\rho^T|D_x^\eta H_s^x|^2\dd s\right)^p
\middle|\F_\rho\right]\right\}^{1/2}\\
&\qquad\qquad\times
\left\{\E^{\mu_x}\left[
\left(\int_\rho^T|V_s^{\beta-\eta}(x)|^2\dd s\right)^p
\middle|\F_\rho\right]\right\}^{1/2}
\le P_{\beta,p}(\mathfrak D_{|\beta|}^x(\rho)).
\end{align*}
The last inequality uses the lower-order estimate at exponent
$2p$. This closes the induction without assuming the existence
of any parameter derivative of the BSDE solution.

\emph{Estimates under the reference probability.}
The bound for $U_\rho^\beta$ in
\eqref{eq:closed-conditional-induction} is a pathwise inequality.
Apply it at a countable dense set of times, use equivalence of
$\mu_x$ and $\Q$, and then use continuity. Lemma~\ref{lem:flow-estimates}
gives
\begin{equation}
\E^\Q\sup_{t\le s\le T}|U_s^\beta(x)|^p\le C_{k,\beta,p}.
\label{eq:candidate-F-moment}
\end{equation}
Under $\Q$, \eqref{eq:derivative-under-Qhat} is
\begin{equation}
\dd U_s^\beta(x)
=-[(H_s^x)^\top V_s^\beta(x)+R_s^\beta(x)]\dd s
+V_s^\beta(x)^\top\dd Y_s
+\widetilde V_s^\beta(x)^\top\dd W_s.
\label{eq:derivative-BSDE}
\end{equation}
Apply Lemma~\ref{lem:localized-BSDE-energy}. By
\eqref{eq:H-square-integrable} and
\eqref{eq:candidate-F-moment} at exponent $2p$,
\[
\E^\Q\left[
\sup_s|U_s^\beta(x)|^p
\left(1+\int_t^T|H_s^x|^2\dd s\right)^{p/2}\right]
\le C_{k,\beta,p}\br{x}^p.
\]
Induction on $|\beta|$, using
\eqref{eq:R-beta} and \eqref{eq:BSDE-data-derivative-moments}
at doubled exponents, gives
\[
\E^\Q\left(\int_t^T|R_s^\beta(x)|\dd s\right)^p
\le C_{k,\beta,p}\br{x}^p.
\]
Thus
\begin{equation}
\E^\Q\left(\int_t^T(|V_s^\beta(x)|^2
+|\widetilde V_s^\beta(x)|^2)\dd s\right)^{p/2}
\le C_{k,\beta,p}\br{x}^p.
\label{eq:candidate-G-moment}
\end{equation}
The case $\beta=0$ is already provided by
Proposition~\ref{prop:density-stability}. These are estimates for
the independently constructed candidates, not assumptions on
the derivatives to be identified.

\emph{Neighboring parameters under one fixed measure.}
Fix a base point $x$ and keep $\mu_x$ fixed while varying $z$.
Since $\dd\widehat Y^{t,z}
=\dd\widehat Y^{t,x}-(H^z-H^x)\dd s$, the candidate at $z$
satisfies under $\mu_x$
\begin{equation}
\begin{aligned}
\dd U_s^\beta(z)
={}&-[(H_s^z-H_s^x)^\top V_s^\beta(z)+R_s^\beta(z)]\dd s\\
&+V_s^\beta(z)^\top\dd\widehat Y_s^{t,x}
+\widetilde V_s^\beta(z)^\top\dd W_s.
\end{aligned}
\label{eq:fixed-measure-candidate}
\end{equation}
All integrands are initially locally square-integrable under
$\mu_x$ by equivalence. The pathwise $U^\beta(z)$ bound from
\eqref{eq:closed-conditional-induction} is still valid under
$\mu_x$. Moreover,
\[
|H_s^z-H_s^x|\le C|X_s^z-X_s^x|.
\]
Apply the conditional energy estimate to
\eqref{eq:fixed-measure-candidate}. The conditional flow
estimates and lower-order induction then give
\begin{equation}
\begin{aligned}
&\E^{\mu_x}\left[
\left(\int_\rho^T(|V_s^\beta(z)|^2
+|\widetilde V_s^\beta(z)|^2)\dd s\right)^{p/2}
\middle|\F_\rho\right]\\
&\qquad\le
P_{\beta,p}\bigl(\mathfrak D_{|\beta|}^z(\rho)
+|X_\rho^z-X_\rho^x|\bigr).
\end{aligned}
\label{eq:fixed-measure-candidate-moments}
\end{equation}
In detail, the first term on the right of the energy estimate
is controlled using conditional H\"older at doubled exponents
and \eqref{eq:conditional-flow-difference}; the source term is
controlled by the same conditional Cauchy--Schwarz calculation
as above, now using the lower-order instances of
\eqref{eq:fixed-measure-candidate-moments}. At $\rho=t$ the flow
derivatives have deterministic initial values, so these moments
are uniform for $z$ in a fixed neighborhood of $x$.

Comparison of \eqref{eq:fixed-measure-candidate} at $z$ and $x$
leaves the source
\[
(H^z-H^x)^\top V^\beta(z)+R^\beta(z)-R^\beta(x)
\]
and the terminal difference $\Gamma^\beta(z)-\Gamma^\beta(x)$.
The flow and data derivatives are continuous in their
parameter in every finite moment norm under $\mu_x$, by the
flow estimates, bounded coefficient derivatives, and dominated
convergence. Induction on $|\beta|$, together with
\eqref{eq:fixed-measure-candidate-moments}, shows that the
source tends to zero in $L^p(\mu_x;L^1(t,T))$.
Lemma~\ref{lem:conditional-linear-BSDE} consequently proves
continuity of all candidates in
$\mathbb S^p(\mu_x)\times\mathbb H^p(\mu_x)\times\mathbb H^p(\mu_x)$
as $z\to x$. Equivalence of measures gives convergence in
$\Q$-probability of these process norms. The locally uniform
higher moments \eqref{eq:candidate-F-moment}--\eqref{eq:candidate-G-moment}
and Vitali's theorem give continuity in the corresponding
$\Q$-norms as well. No comparison of likelihood density
moments is required.

\emph{Identification by difference quotients.}
We proceed by induction on $j\le n_*$. Suppose the candidates
of orders at most $j-1$ have been identified as continuous
strong derivatives. For $|\alpha|=j-1$, $y=x+\varepsilon e_i$,
and $\Delta_\varepsilon A=(A(y)-A(x))/\varepsilon$,
subtraction of the candidate equations under the fixed measure
$\mu_x$ gives
\begin{equation}
\begin{aligned}
\dd\Delta_\varepsilon U_s^\alpha
={}&-S_{\varepsilon,s}\dd s
+(\Delta_\varepsilon V_s^\alpha)^\top\dd\widehat Y_s^{t,x}
+(\Delta_\varepsilon\widetilde V_s^\alpha)^\top\dd W_s,\\
\Delta_\varepsilon U_T^\alpha
={}&\Delta_\varepsilon\Gamma^\alpha,
\end{aligned}
\label{eq:variational-difference-quotient}
\end{equation}
where, with an empty sum when $\alpha=0$,
\begin{align}
S_\varepsilon
={}&(\Delta_\varepsilon H)^\top V^\alpha(y)\notag\\*
&+\sum_{0<\eta\le\alpha}\binom\alpha\eta
\left[(\Delta_\varepsilon D_x^\eta H)^\top V^{\alpha-\eta}(y)
+(D_x^\eta H^x)^\top\Delta_\varepsilon V^{\alpha-\eta}\right].
\label{eq:difference-quotient-source}
\end{align}
Every difference quotient of a BSDE integrand occurring in
this source has lower order. The induction hypothesis and the
fundamental theorem of calculus in the process spaces give
\begin{equation}
\Delta_\varepsilon V^{\alpha-\eta}
=\int_0^1 V^{\alpha-\eta+e_i}(x+\lambda\varepsilon e_i)\dd\lambda,
\qquad 0<\eta\le\alpha.
\label{eq:lower-order-FTC}
\end{equation}
There are analogous parameter integral formulas for the data
derivatives. The identity \eqref{eq:lower-order-FTC}, established
under $\Q$, remains valid under $\mu_x$ by equivalence; the
integral is integrable there by
\eqref{eq:fixed-measure-candidate-moments}.

Candidate continuity under $\mu_x$, the data estimates, and
\eqref{eq:lower-order-FTC} imply
\[
S_\varepsilon\longrightarrow R^{\alpha+e_i}(x)
\quad\hbox{in }L^p(\mu_x;L^1(t,T)),\qquad
\Delta_\varepsilon\Gamma^\alpha
\longrightarrow\Gamma^{\alpha+e_i}(x)
\quad\hbox{in }L^p(\mu_x).
\]
Here the limit source is
$(D_iH^x)^\top V^\alpha(x)+D_iR^\alpha(x)
=R^{\alpha+e_i}(x)$, with the derivative of $R^\alpha$ involving
only the already identified lower-order candidates.
The binomial identity for \eqref{eq:R-beta} gives the last
equality. Stability therefore yields
\begin{equation}
\Delta_\varepsilon(U^\alpha,V^\alpha,\widetilde V^\alpha)
\longrightarrow
(U^{\alpha+e_i},V^{\alpha+e_i},\widetilde V^{\alpha+e_i})(x)
\label{eq:difference-quotient-Qhat-convergence}
\end{equation}
in all finite $\mathbb S^p(\mu_x)\times\mathbb H^p(\mu_x)
\times\mathbb H^p(\mu_x)$ norms.

For completeness, we establish uniform higher moments of
these difference quotients under $\Q$ before transferring the
strong convergence. Lemma~\ref{lem:conditional-linear-BSDE}
applied to \eqref{eq:variational-difference-quotient}, conditional
Cauchy--Schwarz, \eqref{eq:lower-order-FTC}, and
\eqref{eq:fixed-measure-candidate-moments} imply, for a finite
integer $M$ and every $\rho$,
\begin{align}
|\Delta_\varepsilon U_\rho^\alpha|^p
\le C_{k,j,p}\Bigg[1
&+\sum_{\ell=1}^j\bigl(|D_x^\ell X_\rho^x|^M
+|D_x^\ell X_\rho^y|^M\bigr)\notag\\
&+\int_0^1\sum_{\ell=1}^j
|D_x^\ell X_\rho^{x+\lambda\varepsilon e_i}|^M\dd\lambda
\Bigg].
\label{eq:difference-quotient-polynomial-envelope}
\end{align}
To see that no parameter-dependent constant enters this bound,
the conditional flow-difference estimate is used in each
instance of \eqref{eq:fixed-measure-candidate-moments}; along the
parameter segment,
\[
|X_\rho^z-X_\rho^x|
\le|\varepsilon|\int_0^1
|D_xX_\rho^{x+\lambda\varepsilon e_i}|\dd\lambda.
\]
All data quotients are integrals of derivatives of order at
most $j$. Jensen's inequality and a sufficiently large $M$
absorb their finite products into the right-hand side of
\eqref{eq:difference-quotient-polynomial-envelope}.
Use a countable dense set of $\rho$, path continuity, equivalence
of measures, and Lemma~\ref{lem:flow-estimates}. We obtain
\begin{equation}
\sup_{0<|\varepsilon|\le1}
\E^\Q\sup_{t\le s\le T}|\Delta_\varepsilon U_s^\alpha|^p
\le C_{k,j,p}.
\label{eq:difference-quotient-F-moment}
\end{equation}
Similarly, \eqref{eq:difference-quotient-source},
\eqref{eq:lower-order-FTC}, and
\eqref{eq:candidate-G-moment} give
\[
\sup_{0<|\varepsilon|\le1}
\E^\Q\left(\int_t^T|S_{\varepsilon,s}|\dd s\right)^p
\le C_{k,j,p}\br{x}^p.
\]
Under $\Q$, the drift in
\eqref{eq:variational-difference-quotient} additionally contains
$(H^x)^\top\Delta_\varepsilon V^\alpha$. Apply the localized
energy estimate, using \eqref{eq:difference-quotient-F-moment}
at exponent $2p$ and \eqref{eq:H-square-integrable}. It follows
that
\begin{equation}
\sup_{0<|\varepsilon|\le1}
\E^\Q\left(\int_t^T
(|\Delta_\varepsilon V_s^\alpha|^2
+|\Delta_\varepsilon\widetilde V_s^\alpha|^2)\dd s\right)^{p/2}
\le C_{k,j,p}\br{x}^p.
\label{eq:difference-quotient-G-moment}
\end{equation}
Equivalence transfers \eqref{eq:difference-quotient-Qhat-convergence}
to convergence in $\Q$-probability of the process norms.
For any prescribed $q\ge2$, choose $p>q$ in
\eqref{eq:difference-quotient-F-moment} and
\eqref{eq:difference-quotient-G-moment}. Vitali's theorem then
gives convergence in the $\Q$-based $\mathbb S^q\times\mathbb H^q
\times\mathbb H^q$ norms. Hence the candidates of order $j$
are the strong parameter derivatives. Their continuity was
already proved above, and induction reaches order $n_*$.
Substitution into \eqref{eq:candidate-F-moment} and
\eqref{eq:candidate-G-moment} proves
\eqref{eq:uniform-F-parameter}--\eqref{eq:uniform-G-parameter}.

Finally, continuity in the separable process spaces permits
jointly measurable versions, obtained by spatial grid
approximations. For the integrands these approximations are
predictable in $(\omega,s)$, so versions may be chosen
measurable with respect to the predictable sigma-field times
$\mathcal B(\R^d)$. Strong parameter differentiation and Fubini's
theorem identify the derivative fields as weak spatial
derivatives. Integrating
\eqref{eq:uniform-F-parameter}--\eqref{eq:uniform-G-parameter}
over $B_M$ proves \eqref{eq:local-parameter-Sobolev}. Equivalently,
the integrands have predictable versions with values in
$H^{n_*}(B_M)$; Sobolev embedding selects their spatially
continuous representatives outside a $\dd s\otimes\Q$-null set.
These representatives agree with the original integrand processes
at every deterministic parameter, in $\mathbb H^q$: spatial
averages converge to the representative by Sobolev embedding,
and to the original process by parameter continuity in
$\mathbb H^q$. This also makes evaluation at random spatial
parameters unambiguous.

The first-order estimate and the fundamental theorem of
calculus give, for every finite $q\ge2$,
\[
\E^\Q\sup_{t\le s\le T}|F_s^{t,x}-F_s^{t,z}|^q
\le C_{k,q}|x-z|^q.
\]
Choose $q>d$ and apply the Banach-valued Kolmogorov theorem
with target $C([t,T])$. This gives a version continuous in
$x$ with values in $C([t,T])$, hence jointly continuous in
$(s,x)$. On a countable dense set of spatial points,
\eqref{eq:F-representation} bounds the entire time path;
continuity extends the bound simultaneously to all $(s,x)$.
\end{proof}

\section{The existence and uniqueness of the BSPDE}
\label{sec:bspde}

In this section, we reconstruct $(f,g)$ from the untruncated
parameterized BSDE and the inverse stochastic flow.
We prove pointwise observation adaptation of $f$, observation
predictability of $g$, and the BSPDE identity directly for these
fields. We then establish weighted spatial estimates, joint
continuity and boundedness, and uniqueness in the solution class
of Definition~\ref{def:bspde-solution}.

\subsection{Flow multipliers and Eulerian reconstruction}

We first establish the properties of the stopped observation filtration and the weighted flow estimate needed for the reconstruction.

\begin{lemma}[Stopped reference observation filtration]
	\label{lem:reference-stopping-functional}
	Let 
	$
	\mathcal G_t^k:=\mathcal F^Y_{t\wedge\tau_k},
	\mathbb G^k=(\mathcal G_t^k)_{0\le t\le T}.
	$
	The filtration $\mathbb G^k$ has predictable representation with respect to
	$Y_{\cdot\wedge\tau_k}$. If $\theta\le\tau_k$ is an
	$\mathbb F^Y$-stopping time, it is also a $\mathbb G^k$-stopping time, and
	\begin{equation}
		\mathcal G_{t\wedge\theta}^k=\mathcal F^Y_{t\wedge\theta},
		\qquad \mathcal G_\theta^k=\mathcal F^Y_\theta.
		\label{eq:filtrations-agree-through-theta}
	\end{equation}
	Moreover, every $\mathbb G^k$-local martingale, viewed in $\mathbb F$, is a local
	martingale strongly orthogonal to $W$.
\end{lemma}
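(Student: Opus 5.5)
The plan is to prove the four assertions in turn, using only elementary facts about stopped Brownian filtrations.

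\emph{Predictable representation.} Set $\widetilde Y:=Y_{\cdot\wedge\tau_k}$. Since $\tau_k$ is an $\mathbb F^Y$-stopping time, $\widetilde Y$ is adapted to $\mathbb G^k$, and the natural filtration of $\widetilde Y$ coincides with $\mathbb G^k$ up to null sets. The inclusion in one direction holds because $\widetilde Y_s$ is $\mathcal F^Y_{s\wedge\tau_k}$-measurable. For the converse, $\mathcal F^Y_{t\wedge\tau_k}$ is generated by the stopped path $Y_{\cdot\wedge t\wedge\tau_k}$ (Galmarino's test on the canonical space), and $\tau_k$ is itself a functional of this path.

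To get predictable representation, take a bounded $\mathcal G^k_T=\mathcal F^Y_{\tau_k}$-measurable $\xi$. Represent it in $\mathbb F^Y$ as $\xi=\E^\Q\xi+\int_0^T K_s^\top\dd Y_s$. Because $\xi$ is $\mathcal F^Y_{\tau_k}$-measurable, the martingale $\E^\Q[\xi\mid\mathcal F^Y_s]$ is constant after $\tau_k$. Hence $K\one_{(\tau_k,T]}=0$ a.e., and
\[
\xi=\E^\Q\xi+\int_0^T K_s\one_{\{s\le\tau_k\}}\dd \widetilde Y_s .
\]
It remains to choose $K\one_{\{s\le\tau_k\}}$ to be $\mathbb G^k$-predictable. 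We use that $\E^\Q[\xi\mid\mathcal F^Y_{s}]=\E^\Q[\xi\mid\mathcal F^Y_{s\wedge\tau_k}]$ on $\{s\le \tau_k\}$. This follows from optional sampling: $\E[\xi\mid\mathcal F^Y_s]$ stopped at $\tau_k$ equals $\E[\xi\mid\mathcal F^Y_{s\wedge\tau_k}]$. Consequently the $\mathbb G^k$-martingale $N_s:=\E[\xi\mid\mathcal G^k_s]$ equals the stopped $\mathbb F^Y$-martingale. We can therefore obtain the integrand as $\dd\langle N,\widetilde Y\rangle/\dd\langle\widetilde Y\rangle$ computed in $\mathbb G^k$, which gives a $\mathbb G^k$-predictable version. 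Localization then extends the representation to local martingales.

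\emph{Stopping times and \eqref{eq:filtrations-agree-through-theta}.} Since $\theta\le\tau_k$,
\[
\{\theta\le t\}=\{\theta\le t\wedge\tau_k\}\in\mathcal F^Y_{t\wedge\tau_k}=\mathcal G^k_t,
\]
using the standard fact that $\{\theta\le\sigma\}\in\mathcal F_\sigma$ for stopping times $\theta,\sigma$. So $\theta$ is a $\mathbb G^k$-stopping time. Next, $\mathcal G^k_{t\wedge\theta}$ is the stopped $\sigma$-field of the time-changed filtration $(\mathcal F^Y_{s\wedge\tau_k})$ at the $\mathbb G^k$-stopping time $t\wedge\theta$. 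I would verify the two inclusions directly. For $A\in\mathcal F^Y_{t\wedge\theta}$ and every $s$, we have $A\cap\{t\wedge\theta\le s\}\in\mathcal F^Y_{s\wedge\tau_k}$, because $t\wedge\theta\le\tau_k$ and $A\cap\{t\wedge\theta\le s\wedge\tau_k\}\in\mathcal F^Y_{s\wedge\tau_k}$. Conversely, $\mathcal G^k_s\subset\mathcal F^Y_s$ gives $\mathcal G^k_{t\wedge\theta}\subset\mathcal F^Y_{t\wedge\theta}$, since a $\mathbb G^k$-stopping time is an $\mathbb F^Y$-stopping time. The case $t=T$ gives $\mathcal G^k_\theta=\mathcal F^Y_\theta$.

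\emph{Orthogonality to $W$.} First, a $\mathbb G^k$-martingale $N$ remains an $\mathbb F$-martingale. Indeed $\mathcal G^k_s\subset\mathcal F^Y_s$, and under $\Q$ the $\sigma$-field $\sigma(W)$ is independent of $\mathcal F^Y_T$, so
\[
\E[N_t\mid\mathcal F_s]=\E[N_t\mid\mathcal F^Y_s]=\E\bigl[\E[N_t\mid\mathcal F^Y_{s\wedge\tau_k}]\mid\mathcal F^Y_s\bigr].
\]
Here I use that $N_t=\E[N_t\mid\mathcal F^Y_{t\wedge\tau_k}]$ and the stopping identity from the first step. By the representation above, $N=N_0+\int K\dd\widetilde Y$, and then $\langle N,W\rangle=\int K\one_{\{s\le\tau_k\}}\dd\langle Y,W\rangle=0$. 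For local martingales, localize with $\mathbb G^k$-stopping times, which are $\mathbb F$-stopping times.

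\emph{Main obstacle.} The only delicate point is the predictability of the representing integrand in $\mathbb G^k$ rather than in $\mathbb F^Y$, i.e.\ the identity $\E[\xi\mid\mathcal F^Y_s]=\E[\xi\mid\mathcal G^k_s]$. I would handle it via optional sampling as above: the $\mathbb F^Y$-martingale is constant after $\tau_k$, so its stopped version coincides with it.
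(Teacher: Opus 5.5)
Your proposal is correct and follows essentially the paper's route: represent $\xi=N_T$ in $\mathbb F^Y$, use optional sampling to identify the $\mathbb G^k$-martingale $N$ with the $\mathbb F^Y$-martingale stopped at $\tau_k$, and note that the resulting integral against $Y_{\cdot\wedge\tau_k}$ has zero covariation with $W$. Your extra checks only fill in what the paper leaves implicit: the explicit proof of \eqref{eq:filtrations-agree-through-theta}, the $\mathbb F$-martingale property via independence of $W$ and $\mathcal F^Y_T$, and $\mathbb G^k$-predictability of the integrand. The last of these is easier than your bracket-density argument, since $K\one_{[0,\tau_k]}$ is already $\mathbb G^k$-predictable whenever $K$ is $\mathbb F^Y$-predictable (check it on generators $A\times(s,t]$, using $A\cap\{s<\tau_k\}\in\mathcal F^Y_{s\wedge\tau_k}$); your Galmarino remark is never used.
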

\begin{proof}

The predictable representation property is preserved under stopping; see
\citet[Chapter~III, Section~4]{JacodShiryaev2003}.  Indeed, let \(N\) be any square-integrable \(\mathbb G^k\)-martingale and set \(\xi:=N_T\). Since
$
\mathcal G_T^k=\mathcal F^Y_{\tau_k},
$
we have \(\xi\in L^2(\mathcal F^Y_{\tau_k})\). By the Brownian martingale representation theorem,
\[
M_t:=\mathbb E^\mathbb Q[\xi\mid\mathcal F_t^Y]
=M_0+\int_0^t Z_s^\top\,\mathrm dY_s
\]
for some square-integrable \(\mathbb F^Y\)-predictable process \(Z\). Optional sampling then gives
\[
\begin{aligned}
	N_t
	=\mathbb E^\mathbb Q[\xi\mid\mathcal G_t^k]
	=M_{t\wedge\tau_k}
	=N_0+\int_0^{t\wedge\tau_k}Z_s^\top\,\mathrm dY_s.
\end{aligned}
\]
This proves the predictable representation property of \(\mathbb G^k\) with respect to \(Y_{\cdot\wedge\tau_k}\).
Note that 
\eqref{eq:filtrations-agree-through-theta} is clearly true, and
\[
 \left[\int_0^{\cdot\wedge\tau_k}Z_s^\top\dd Y_s,W^\nu\right]_t=0,
 \qquad 1\le\nu\le r,
\]
so we finish the proof. 
\end{proof}

\begin{lemma}[Weighted stochastic-flow multiplier]
\label{lem:weighted-multiplier}
For $y\in\R^d$ and $0\le s\le\theta$, let $P_s(y)$ be any fixed
polynomial, with deterministic coefficients, in the components of
\[
\bigl\{D^j\Phi_s(y),\,
D^j\Psi_s(\Phi_s(y)):1\le j\le n_*\bigr\}
\quad
J_s(y),\;\text{and} \; \; J_s(y)^{-1}.
\]
Then, for every $m>0$ and every finite $q\ge1$,
\begin{equation}
 \left\|
   \sup_{0\le s\le\theta}
   \br{\Phi_s(y)}^{-m}J_s(y)|P_s(y)|
 \right\|_{L^q(\Q)}
 \le C_{m,q,k,P}\br{y}^{-m}.
 \label{eq:weighted-multiplier}
\end{equation}
\end{lemma}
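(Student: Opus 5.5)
The plan is to separate the weight factor from the polynomial and Jacobian factors by H\"older's inequality. The estimate \eqref{eq:weighted-multiplier} then reduces to a single new ingredient, namely the moment bound
\begin{equation*}
 \Big\|\sup_{0\le s\le T}\frac{\br{\Phi_s(y)}^{-m}}{\br{y}^{-m}}\Big\|_{L^{q}(\Q)}\le C_{m,q,k}
\end{equation*}
for every $m>0$ and every finite $q$. Given this, write $\br{\Phi_s(y)}^{-m}J_s(y)|P_s(y)|=\br{y}^{-m}\cdot\bigl(\br{\Phi_s(y)}^{-m}/\br{y}^{-m}\bigr)\cdot J_s(y)\cdot|P_s(y)|$ and apply H\"older's inequality with three exponents. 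The factor $\sup_s J_s(y)$ has all moments bounded uniformly in $y$ by \eqref{eq:maximal-inverse-flow-moments}. For $\sup_s|P_s(y)|$, expand $P$ into monomials and apply H\"older again to each product. Each factor is one of $D^j\Phi_s(y)$, $D^j\Psi_s(\Phi_s(y))$, $J_s(y)$ or $J_s(y)^{-1}$, and its supremum over $s$ has all moments bounded uniformly in $y$ by \eqref{eq:flow-derivative-moment} and \eqref{eq:maximal-inverse-flow-moments}, applied with $t=0$ and $\widetilde\Q=\Q$. Since $\theta\le T$, the supremum over $s\le\theta$ is dominated by the supremum over $s\le T$; the flow is frozen after $\theta$ in any case.

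For the weight ratio, set $X_s=\Phi_s(y)$ and $V(x)=\br{x}^{-m}$. I would apply It\^o's formula to $V(X_s)$. Direct computation gives $\nabla V=-mx\br{x}^{-m-2}$ and $|D^2V|\le C_m\br{x}^{-m-2}$. The linear growth \eqref{eq:filtering-growth} then yields
\begin{equation*}
 |b\cdot\nabla V|\le C(1+|u_s|)V,\qquad \bigl|\tfrac12\operatorname{tr}(aD^2V)\bigr|\le C(1+|u_s|^2)V,\qquad |\sigma^\top\nabla V|\le C(1+|u_s|)V,
\end{equation*}
with all coefficients evaluated at $(X_s,u_s)$ and multiplied by $\chi^0_s$. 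Hence
\begin{equation*}
 \dd V(X_s)=V(X_s)\bigl(\beta_s\dd s+\kappa_s^\top\dd W_s\bigr),
\end{equation*}
where $\beta$ and $\kappa$ are predictable and satisfy $|\beta_s|\le C\chi_s^0(1+|u_s|^2)$ and $|\kappa_s|\le C\chi^0_s(1+|u_s|)$. Solving this linear equation gives
\begin{equation*}
 \frac{V(X_s)}{V(y)}=\cE\Bigl(\int_0^\cdot\kappa^\top\dd W\Bigr)_s\exp\Bigl(\int_0^s\beta_a\dd a\Bigr).
\end{equation*}
Because $\theta\le\tau_k$, we have $\int_0^T\chi^0_a|u_a|^2\dd a\le k$ almost surely. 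Therefore $\exp(\int_0^T|\beta_a|\dd a)\le e^{C(T+k)}$, and $\int_0^T|\kappa_a|^2\dd a\le C(T+k)$ deterministically.

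Consequently $\cE(\kappa\cdot W)^{q}=\cE(q\kappa\cdot W)\exp\bigl(\tfrac{q(q-1)}2\int_0^\cdot|\kappa_a|^2\dd a\bigr)$ has expectation bounded by $e^{C_q(T+k)}$, where Novikov's condition holds trivially. Doob's maximal inequality applied to the martingale $\cE(\kappa\cdot W)$ then controls the supremum over $s$. This gives the displayed ratio bound with a constant depending only on $m,q,k,T$ and $K$. The step I expect to need the most care is precisely this one: the unbounded control $u$ enters the drift quadratically, and only the stopped energy bound keeps the exponential factor bounded. I would also check that the It\^o computation is legitimate even though $V$ is not compactly supported. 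This is unproblematic because $V$ is smooth and has bounded derivatives, and the resulting stochastic integral is a true martingale by the same deterministic quadratic-variation bound. Combining the pieces via H\"older's inequality proves \eqref{eq:weighted-multiplier}.
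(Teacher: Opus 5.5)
Your proposal is correct and follows essentially the same route as the paper: apply It\^o's formula to $\br{\Phi_s(y)}^{-m}$ to get a multiplicative SDE whose coefficients are controlled by $\int_0^\theta(1+|u_s|^2)\dd s\le T+k$, then use H\"older's inequality with the uniform flow moments of Lemma~\ref{lem:flow-estimates}. Your explicit stochastic-exponential, Novikov and Doob argument fills in the step from the linear SDE to the maximal moment bound, which the paper compresses into a single ``Therefore''.
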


\begin{proof}

Let  $V_m(x)=\br{x}^{-m}$, direct differentiation gives
\begin{align*}
 |DV_m(x)|\le C_mV_m(x)\br{x}^{-1}, \qquad
 |D^2V_m(x)|\le C_mV_m(x)\br{x}^{-2},
\end{align*}
and 
\begin{align*}
	\bigl|b(x,v)^\top DV_m(x)\bigr|
	+\left|\sum_{i,j=1}^d
	a^{ij}(x,v)D_{ij}V_m(x)\right|
	&\le C_m(1+|v|^2)V_m(x),\\
	\bigl|DV_m(x)\sigma(x,v)\bigr|^2
	&\le C_m(1+|v|^2)V_m(x)^2.
\end{align*}

Set $V_s=V_m(\Phi_s(y))$.  By It\^o's formula,
\[
\dd V_s=V_s\mu_s\dd s+V_s\nu_s^\top\dd W_s,
\]
where
\begin{align*}
	\mu_s
	&:=
	\frac{\chi_s^0}{V_m(\Phi_s(y))}
	(L_sV_m)(\Phi_s(y)),\\
	\nu_s^\ell
	&:=
	\frac{\chi_s^0}{V_m(\Phi_s(y))}
	\sum_{i=1}^d
	D_iV_m(\Phi_s(y))
	\sigma^{i\ell}(\Phi_s(y),u_s),
	\qquad \ell=1,\ldots,r.
\end{align*}
It follows that
\[
\int_0^T\bigl(|\mu_s|+|\nu_s|^2\bigr)\dd s
\le C_m\int_0^\theta(1+|u_s|^2)\dd s
\le C_m(T+k).
\]

Therefore, for every finite $q\ge1$, we have 
\begin{equation}
 \E^\Q\sup_{s\le\theta}V_m(\Phi_s(y))^q
 \le C_{m,q,k}V_m(y)^q.
 \label{eq:negative-flow-moment}
\end{equation}

All finite moments of
$\sup_{s\le\theta}|J_s(y)P_s(y)|$ are bounded uniformly in $y$ by
\eqref{eq:flow-derivative-moment} and \eqref{eq:maximal-inverse-flow-moments}.  Hence, H\"older's inequality and
\eqref{eq:negative-flow-moment} give
\[
 \left\|\sup_{s\le\theta}
 \br{\Phi_s(y)}^{-m}J_s(y)|P_s(y)|\right\|_{L^q}
 \le C_{m,q,k,P}\br{y}^{-m}.
\]

\end{proof}

Define the Eulerian fields directly from the untruncated BSDE by
\begin{equation}
 f_s(x):=F_s^{0,\Psi_s(x)},
 \qquad
 g_s(x):=G_s^{0,\Psi_s(x)}.
 \label{eq:reconstruct-limit}
\end{equation}
Here $(F^{0,y},G^{0,y},\widetilde G^{0,y})$ is the bounded solution of
\eqref{eq:untruncated-BSDE} with starting point $(0,y)$.
We use the jointly measurable parameter versions furnished by
Proposition~\ref{prop:uniform-parameter}. The spatial Sobolev regularity
in that proposition makes the compositions well defined. In particular,
for $\Q\otimes\dd s$-almost every $(\omega,s)$ the integrands have
continuous spatial Sobolev representatives on compact parameter sets;
we choose these representatives jointly predictably in $(\omega,s,y)$.
As explained in the proof of Proposition~\ref{prop:uniform-parameter},
at every deterministic parameter these representatives agree with
the original BSDE processes in the corresponding process norm.
The fields are
extended after $\theta$ by $f_s=\gamma$ and $g_s=0$.

The stochastic-flow property and uniqueness in the bounded class give,
for every deterministic $(t,x)$,
\begin{equation}
 F_s^{t,x}=f_s(X_s^{t,x}),\qquad
 G_s^{t,x}=g_s(X_s^{t,x}),\qquad t\le s\le T,
 \label{eq:decoupling-direct}
\end{equation}
where the first equality holds simultaneously for all $s$, almost surely,
and the second holds for $\Q\otimes\dd s$-almost every $(\omega,s)$.
To justify the random-parameter substitution, fix $(t,x)$ and let
$\xi=\Psi_t(x)$. Then $\xi$ is $\mathcal F_t$-measurable and
\[
 X_s^{t,x}=\Phi_s(\xi),\qquad
 \Psi_s(X_s^{t,x})=\xi,\qquad t\le s\le T.
\]
On $\{|\xi|\le M\}$, approximate $\xi$ by finite-valued
$\mathcal F_t$-measurable variables taking values in $B_{M+1}$.
For each approximating variable, the BSDE on $[t,T]$ follows by
finite pasting of deterministic-parameter equations. Sobolev embedding
in the parameter variable, the local parameter estimates of
Proposition~\ref{prop:uniform-parameter}, and the corresponding flow
estimates allow passage to the limit in the terminal value, drift, and
stochastic integrals. More explicitly, spatial Sobolev embedding gives
\[
 \E^\Q\int_t^T\sup_{y\in B_{M+1}}
 \bigl(|G_s^{0,y}|^2+|\widetilde G_s^{0,y}|^2
       +|H_s^{0,y}|^2\bigr)\dd s<\infty,
\]
using a slightly larger ball in the Sobolev norms. Spatial continuity
and dominated convergence therefore give convergence of the substituted
integrands in $L^2(\Q\otimes\dd s)$ and of the substituted drift in
$L^1(\Q\otimes\dd s)$, by Cauchy--Schwarz. The terminal values and first
components converge by joint continuity and the common bound on $F$;
BDG handles the stochastic integrals. Letting $M\uparrow\infty$ shows that
$(F^{0,\xi},G^{0,\xi},\widetilde G^{0,\xi})$, restricted to $[t,T]$,
solves the BSDE with coefficient $H^{t,x}$ and terminal value
$\Gamma^{t,x}$. This argument initially yields locally square-integrable
integrands. Since $|F^{0,\xi}|\le\|\gamma\|_\infty$ and
$\E^\Q\int_t^T|H_s^{t,x}|^2\dd s<\infty$, the localized energy estimate
in Lemma~\ref{lem:localized-BSDE-energy} makes those integrands globally
square integrable. Uniqueness in Proposition~\ref{prop:density-stability}
therefore identifies this triple with
$(F^{t,x},G^{t,x},\widetilde G^{t,x})$, proving
\eqref{eq:decoupling-direct}.

The proof of the following lemma is straightforward and is therefore omitted.
\begin{lemma}[Reference future filtration]
	\label{lem:future-filtration}
	For  $s\in[0,T]$, define
	$\mathbb K^s=(\mathcal K_r^s)_{r\in[s,T]}$ by
	\begin{equation}
		\mathcal K_r^s
		:=\mathcal F_s^Y
		\vee\sigma(W_a-W_s,Y_a-Y_s:s\le a\le r)
		\vee\mathcal N.
		\label{eq:future-filtration}
	\end{equation}
	Then the following statements hold.
	\begin{enumerate}
	
		\item[(i)]
		For every $r\in[s,T]$,
		$
		\mathcal F_r^Y\subset\mathcal K_r^s.
		$ 
		
		\item[(ii)]
		The random time $\rho_s:=s\vee\theta$ is a
		$\mathbb K^s$-stopping time, and
		$
		\mathcal F_\theta^Y\subset\mathcal K_{\rho_s}^s.
		$
	
	\end{enumerate}
\end{lemma}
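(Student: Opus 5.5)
For (i) the plan is to unwind the definitions. Fix $r\in[s,T]$. The σ-field $\mathcal F_r^Y$ is the augmentation of $\sigma(Y_a:0\le a\le r)$. For $a\le s$ the variable $Y_a$ is $\mathcal F_s^Y$-measurable. For $s<a\le r$ we write $Y_a=Y_s+(Y_a-Y_s)$, where the first term is $\mathcal F_s^Y$-measurable and the second is one of the generators of $\mathcal K_r^s$. Hence $\sigma(Y_a:a\le r)\subset\mathcal K_r^s$. Both filtrations are completed with the same $\Q$-null sets $\mathcal N$, so the inclusion passes to the augmentations. If the usual augmentation is also meant to include right-continuity, I would either replace $\mathcal K^s$ by its right-continuous regularization $\mathcal K^s_{r+}$, or note that $\mathcal K^s$ is generated by $\mathcal F^Y_s$ together with the increments of the Brownian motion $(W,Y)$ after $s$. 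In the second case the Blumenthal $0$–$1$ argument for Brownian increments, conditioned on the independent σ-field $\mathcal F_s^Y$, shows that $\mathcal K^s$ is already right-continuous. This is the only point in (i) that needs care.

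For (ii) I would first show that $\rho_s=s\vee\theta$ is a $\mathbb K^s$-stopping time, which amounts to $\{\rho_s\le r\}\in\mathcal K_r^s$ for $r\in[s,T]$. Since $\rho_s\ge s$, we have $\{\rho_s\le r\}=\{\theta\le r\}$. This event lies in $\mathcal F_r^Y$ because $\theta$ is an $\mathbb F^Y$-stopping time, and by (i) it therefore lies in $\mathcal K_r^s$. For $r<s$ the event is empty, so nothing is needed there.

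For the inclusion $\mathcal F_\theta^Y\subset\mathcal K_{\rho_s}^s$, take $A\in\mathcal F_\theta^Y$ and fix $r\in[s,T]$. We need $A\cap\{\rho_s\le r\}\in\mathcal K_r^s$. As before $A\cap\{\rho_s\le r\}=A\cap\{\theta\le r\}$, and this set belongs to $\mathcal F_r^Y$ by the definition of $\mathcal F^Y_\theta$. Part (i) then puts it in $\mathcal K_r^s$. The setup is legitimate because the index set of $\mathbb K^s$ is $[s,T]$ and $\rho_s$ takes values there. So the whole argument reduces to (i) plus the elementary identity $\{s\vee\theta\le r\}=\{\theta\le r\}$ for $r\ge s$.

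The main obstacle I anticipate is the augmentation and right-continuity point in (i). The stopping-time properties in (ii) only use the identity above and (i).
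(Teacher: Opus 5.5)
Your argument is correct and is exactly the straightforward verification the paper omits: (i) follows from $Y_a=Y_s+(Y_a-Y_s)$ for $a\in(s,r]$, and (ii) follows from $\{s\vee\theta\le r\}=\{\theta\le r\}$ for $r\in[s,T]$ together with (i). Your right-continuity worry is moot, because the completed natural filtration of the Brownian motion $Y$ is already right-continuous, so $\mathcal F_r^Y=\sigma(Y_a:a\le r)\vee\mathcal N$; in any case, do not use your first option of replacing $\mathcal K^s$ by its right-continuous regularization, since that would change the filtration defined in the statement.
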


By Lemma~\ref{lem:future-filtration} (i), the given
$\mathbb F^Y$-predictable control $u$, restricted to $[s,T]$,
is also $\mathbb K^s$-predictable.

\begin{proposition}[Diagonal stopped-observation measurability]
\label{prop:diagonal-Y}
The field $f$ in \eqref{eq:reconstruct-limit} has a jointly measurable
version whose sample paths are continuous in $(s,x)$. For this version,
for every deterministic $s\in[0,T]$ and $x\in\R^d$,
\begin{equation}
 f_{s\wedge\theta}(x)
 =\one_{\{s\le\theta\}}F_s^{s,x}
  +\one_{\{s>\theta\}}\gamma(x),
 \qquad \Q\text{-a.s.}
 \label{eq:diagonal-Y}
\end{equation}
Moreover, $f_{\cdot\wedge\theta}$ is a $\mathbb G^k$-adapted
$C(\R^d)$-valued process, with $C(\R^d)$ equipped with the topology
of uniform convergence on compact sets. In particular,
$f_{\cdot\wedge\theta}(x)$ is $\mathbb G^k$-adapted for every fixed
$x\in\R^d$.
\end{proposition}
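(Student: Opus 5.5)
The proof has three parts: build a jointly continuous version of $f$, prove the diagonal identity \eqref{eq:diagonal-Y} from the decoupling identity \eqref{eq:decoupling-direct}, and then show that $F_s^{s,x}$ is $\mathcal F^Y_s$-measurable by working in the reference future filtration of Lemma~\ref{lem:future-filtration}.

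\emph{Continuous version.} By Proposition~\ref{prop:uniform-parameter} with $t=0$, the map $(s,y)\mapsto F_s^{0,y}$ has a version that is jointly continuous on $[0,T]\times\R^d$. By Lemma~\ref{lem:flow-estimates}, $(s,x)\mapsto\Psi_s(x)$ is jointly continuous, and it is frozen after $\theta$ because $\chi^0$ stops the flow there. Composing, $f_s(x)=F_s^{0,\Psi_s(x)}$ is continuous on $[0,\theta]\times\R^d$. After $\theta$ the field equals $\gamma$, and $F_\theta^{0,y}=\gamma(\Phi_\theta(y))$, so the two pieces glue continuously at $\theta$. Hence $f_{\cdot\wedge\theta}$ has continuous sample paths, and joint measurability follows from joint continuity.

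\emph{Diagonal identity.} Take $t=s$ in \eqref{eq:decoupling-direct}. At time $s$ we have $X_s^{s,x}=x$, which gives $F_s^{s,x}=f_s(x)$ a.s. On $\{s\le\theta\}$ this is $f_{s\wedge\theta}(x)$. On $\{s>\theta\}=A_s^c$, \eqref{eq:BSDE-off-active-event} gives $F_s^{s,x}=\gamma(x)$, which agrees with the extension of $f$ after $\theta$. This proves \eqref{eq:diagonal-Y}.

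\emph{Adaptedness.} Since $\mathcal G^k_s\supset\mathcal F^Y_{s\wedge\theta}$, it suffices to show that $f_{s\wedge\theta}(x)$ is $\mathcal F^Y_{s\wedge\theta}$-measurable, and by \eqref{eq:diagonal-Y} this reduces to showing that $\one_{\{s\le\theta\}}F_s^{s,x}$ is $\mathcal F^Y_s$-measurable.
\begin{enumerate}[label=(\roman*)]
\item For the starting time $s$, the process $X^{s,x}$ is driven only by $W_\cdot-W_s$ and by $u$ on $(s,\theta]$. By Lemma~\ref{lem:future-filtration}(i), $u$ is $\mathbb K^s$-predictable, so $X^{s,x}$, $H^{s,x}$ and $Z^{s,x}$ are $\mathbb K^s$-adapted.
\item By Lemma~\ref{lem:future-filtration}(ii), $\Gamma^{s,x}=\gamma(X^{s,x}_\theta)$ is $\mathcal K^s_{\rho_s}$-measurable. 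Here we use that $X^{s,x}$ is constant after $\theta$, and that $\gamma$ is $\mathcal F^Y_\theta\otimes\mathcal B$-measurable with $\mathcal F^Y_\theta\subset\mathcal K^s_{\rho_s}$.
\item The increments of $(W,Y)$ after $s$ are independent of $\mathcal F_s^W\vee\mathcal F^Y_s$. Hence, for a $\mathcal K^s_T$-measurable integrable $\xi$,
\[
\E^\Q[\xi\mid\mathcal F_s]=\E^\Q[\xi\mid\mathcal K^s_s]=\E^\Q[\xi\mid\mathcal F^Y_s].
\]
I would check this on product functionals of the form $\eta\,\phi(\text{increments})$ with $\eta$ $\mathcal F^Y_s$-measurable, and extend by a monotone class argument.
\item Apply this to the representation \eqref{eq:F-representation}. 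Since $Z^{s,x}_s=1$ (as $H^{s,x}=0$ on $[0,s]$),
\[
F_s^{s,x}=\E^\Q[Z_T^{s,x}\Gamma^{s,x}\mid\mathcal F_s]=\E^\Q[Z_T^{s,x}\Gamma^{s,x}\mid\mathcal F^Y_s],
\]
which is $\mathcal F^Y_s$-measurable. The indicator $\one_{\{s\le\theta\}}$ is also $\mathcal F^Y_s$-measurable, since $\theta$ is an $\mathbb F^Y$-stopping time.
\end{enumerate}
I expect step (iii) to be the main obstacle: the independence and monotone class argument, together with the completions by $\mathcal N$, must be handled carefully.

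\emph{$C(\R^d)$-valued adaptedness.} For the $C(\R^d)$-valued statement, the Borel $\sigma$-field of $C(\R^d)$ with the compact-open topology is generated by point evaluations at a countable dense set. Pointwise adaptedness at rational $x$, combined with spatial continuity, therefore gives adaptedness of the $C(\R^d)$-valued process. The null sets are absorbed because all filtrations are complete.
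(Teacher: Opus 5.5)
Your proof is correct. Apart from one step it follows the paper: the continuous version comes from the parameter-continuous version of $F^{0,\cdot}$ composed with the continuous inverse flow; the diagonal identity comes from \eqref{eq:decoupling-direct} at $t=s$ with $X_s^{s,x}=x$ and $f_\theta=\gamma$; and the $C(\R^d)$-valued statement comes from a countable dense set of points.

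The genuine difference is how you show that $F_s^{s,x}$ is $\mathcal F_s^Y$-measurable. The paper rebuilds the whole BSDE solution inside the future filtration $\mathbb K^s$. It reruns the entropy argument for the likelihood there, uses martingale representation in $\mathbb K^s$ with initial $\sigma$-field $\mathcal F_s^Y$, the ratio construction and the localized energy estimate. It then uses uniqueness in the full filtration to identify this solution with $(F^{s,x},G^{s,x},\widetilde G^{s,x})$, so that $F_s^{s,x}$ is $\mathcal K_s^s$-measurable. You use only three things:
\begin{enumerate}
\item the Bayes formula \eqref{eq:F-representation} at time $s$ with $Z_s^{s,x}=1$;
\item the $\mathcal K_T^s$-measurability of $Z_T^{s,x}\Gamma^{s,x}$, which both routes need;
\item the identity $\E^\Q[\xi\mid\mathcal F_s]=\E^\Q[\xi\mid\mathcal F_s^Y]$ for integrable $\mathcal K_T^s$-measurable $\xi$.
\end{enumerate}
Your product-functional monotone-class check proves the third item correctly; it is routine rather than the main obstacle.

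Your route is shorter and more elementary. Integrability of $\xi$ is inherited from Proposition~\ref{prop:param-likelihood} in $\mathbb F$. You need neither a representation theorem in $\mathbb K^s$ (which itself rests on essentially your independence argument) nor a uniqueness identification. The paper's route gives the stronger by-product that the whole solution on $[s,T]$ is $\mathbb K^s$-adapted, but this proposition does not use it.

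One line you should add: $\mathcal F_s^Y$-measurability of $\one_{\{s\le\theta\}}F_s^{s,x}$ upgrades to $\mathcal F^Y_{s\wedge\theta}$-measurability. This holds because an $\mathcal F_s^Y$-measurable variable vanishing off $\{s\le\theta\}$ is also $\mathcal F_\theta^Y$-measurable, while $\one_{\{\theta<s\}}\gamma(x)$ is $\mathcal F^Y_\theta\cap\mathcal F^Y_s$-measurable. The paper obtains this instead from $f_{\cdot\wedge\theta}$ being a continuous adapted process stopped at $\theta$.
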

\begin{proof}
For $q>d$, the first-derivative estimate in
Proposition~\ref{prop:uniform-parameter} gives
\[
 \E^\Q\sup_{0\le r\le T}|F_r^{0,y}-F_r^{0,z}|^q
 \le C_{k,q}|y-z|^q.
\]
The Banach-valued Kolmogorov criterion yields a version for which
$y\mapsto(F_r^{0,y})_{0\le r\le T}$ is continuous with values in
$C([0,T])$. The joint continuity of the inverse flow then makes
$f_s(x)=F_s^{0,\Psi_s(x)}$ continuous in $(s,x)$.

Fix deterministic $s,x$. In the future filtration $\mathbb K^s$ of
Lemma~\ref{lem:future-filtration}, construct $X^{s,x}$ and the likelihood
$Z^{s,x}$ exactly as in Proposition~\ref{prop:param-likelihood}.
The same entropy argument proves that this likelihood is a uniformly
integrable martingale in $\mathbb K^s$. The terminal value
$\Gamma^{s,x}=\gamma(X_\theta^{s,x})$ is
$\mathcal K_T^s$-measurable. Brownian martingale representation in
$\mathbb K^s$, with its initial sigma-algebra $\mathcal F_s^Y$, and the
ratio construction in Proposition~\ref{prop:density-stability} give a
bounded solution to \eqref{eq:untruncated-BSDE} in this filtration.
The localized energy estimate gives square-integrable integrands under
$\Q$. Full-filtration uniqueness identifies it with
$(F^{s,x},G^{s,x},\widetilde G^{s,x})$. Consequently,
$F_s^{s,x}$ is $\mathcal F_s^Y$-measurable.

Using \eqref{eq:decoupling-direct} at time $s$ and the frozen terminal
flow after $\theta$ gives
\[
 \one_{\{s\le\theta\}}f_s(x)
 =\one_{\{s\le\theta\}}F_s^{s,x},\qquad
 \one_{\{s>\theta\}}f_\theta(x)
 =\one_{\{s>\theta\}}\gamma(x),
\]
which proves \eqref{eq:diagonal-Y}. Applying this identity on a countable
dense set of spatial points and using spatial continuity shows that
$f_{s\wedge\theta}$ is an $\mathcal F_s^Y$-measurable
$C(\R^d)$-valued random variable. Thus $f_{\cdot\wedge\theta}$ is
a continuous $\mathbb F^Y$-adapted process. Since it is constant after
$\theta$, its value at time $s$ is $\mathcal F_{s\wedge\theta}^Y$-measurable.
By \eqref{eq:filtrations-agree-through-theta},
\[
 \mathcal F_{s\wedge\theta}^Y
 =\mathcal G_{s\wedge\theta}^k\subset\mathcal G_s^k.
\]
The claimed $\mathbb G^k$-adaptedness follows. Evaluation at any fixed
$x$ is a continuous map on $C(\R^d)$, and hence gives the final assertion.
\end{proof}

\subsection{Verification of the reconstructed BSPDE solution}

\begin{definition}[Localized It\^o--Wentzell condition]
	\label{def:localized-IW}
	Let $U$ be a scalar random field, let $A$ be scalar-valued, let $\tilde A$
	be $\R^m$-valued, and write $C=(C^1,\ldots,C^r)$. We say that
	$(U,A, \tilde A,C)$ satisfies condition $\mathrm{(IW)}_{n_*}$ on
	$[0,\theta]$ if, 
	\begin{equation}
		\mathrm{d}U_s
		=A_s\,\mathrm{d}s
		+\tilde A_s^\top\,\mathrm{d}Y_s
		+\sum_{\nu=1}^r C_s^\nu\,\mathrm{d}W_s^\nu.
		\label{eq:generic-parameter-semimartingale}
	\end{equation}
Here the identity holds as a semimartingale equality on $[0,\theta]$
after pairing with any $\varphi\in C_c^\infty(\R^d)$,
and $\tilde A, C$ admit predictable representatives.
	
	Moreover, for every $M<\infty$,   there exist stopping times
	$\kappa_j^M\uparrow\theta$, $\Q$-a.s., such that, for every $j$,
	\begin{align}
		&\E^\Q\operatorname*{ess\,sup}_{s\le\kappa_j^M}
		\|U_s\|_{H^{n_*}(B_M)}^2
		+\E^\Q\int_0^{\kappa_j^M}
		\|A_s\|_{H^{n_*-2}(B_M)}\,\mathrm{d}s
		\notag\\
		&\qquad
		+\E^\Q\int_0^{\kappa_j^M}
		\left(
		\|\tilde A_s\|_{H^{n_*-1}(B_M)}^2
		+\|C_s\|_{H^{n_*-1}(B_M)}^2
		\right)\mathrm{d}s
		<\infty.
		\label{eq:IW-local-Sobolev-norms}
	\end{align}
\end{definition}

In the remainder of this subsection, write
\[
b_s(x):=b(x,u_s),\qquad
\sigma_s(x):=\sigma(x,u_s),\qquad
a_s(x):=\sigma_s(x)\sigma_s(x)^\top,
\]
and let $\sigma_s^\nu$ denote the $\nu$th column of $\sigma_s$.  
We use the Einstein summation convention 
for spatial indices: every spatial index repeated within a term is 
summed from $1$ to $d$.

\begin{lemma}[Localized forward and inverse stochastic-flow formulas]
	\label{lem:flow-composition}
	
	\textnormal{(i) Inverse-flow dynamics.}
	For every component $\ell$, the inverse flow satisfies
	\begin{equation}
		\mathrm{d}\Psi_s^\ell(x)
		=\beta_s^\ell(x)\,\mathrm{d}s
		+\sum_{\nu=1}^r
		\zeta_s^{\ell\nu}(x)\,\mathrm{d}W_s^\nu,
		\label{eq:inverse-flow-SDE}
	\end{equation}
	where
	\begin{align}
		\zeta_s^{\ell\nu}(x)
		&=-D_i\Psi_s^\ell(x)\sigma_s^{i\nu}(x),
		\label{eq:inverse-flow-diffusion}\\
		\beta_s^\ell(x)
		&=-D_i\Psi_s^\ell(x)b_s^i(x)
		+\frac12a_s^{ip}(x)D_{ip}\Psi_s^\ell(x)
		\notag\\
		&\quad
		+\sum_{\nu=1}^r
		D_i\Psi_s^\ell(x)
		D_p\sigma_s^{i\nu}(x)\sigma_s^{p\nu}(x).
		\label{eq:inverse-flow-drift}
	\end{align}
	
	\textnormal{(ii) Inverse-flow composition.}
		Suppose that $(U,A,B,C)$ satisfies condition
	$\mathrm{(IW)}_{n_*}$.
	Then,
	\begin{equation}
		\mathrm{d}U_s(\Psi_s(x))
		=\widehat A_s\,\mathrm{d}s
		+\widehat B_s^\top\,\mathrm{d}Y_s
		+\sum_{\nu=1}^r
		\widehat C_s^\nu\,\mathrm{d}W_s^\nu,
		\label{eq:inverse-pullback-formula}
	\end{equation}
	where
	\begin{align}
		\widehat A_s(x)
		={}&A_s(\Psi_s(x))
		+D_\ell U_s(\Psi_s(x))\beta_s^\ell(x)
		\notag\\
		&+\frac12D_{\ell q}U_s(\Psi_s(x))
		\sum_{\nu=1}^r
		\zeta_s^{\ell\nu}(x)\zeta_s^{q\nu}(x)
		\notag\\
		&+\sum_{\nu=1}^r
		D_\ell C_s^\nu(\Psi_s(x))
		\zeta_s^{\ell\nu}(x),
		\label{eq:inverse-pullback-drift}\\
		\widehat B_s(x)
		={}&B_s(\Psi_s(x)), \quad
		\widehat C_s^\nu(x)
		=C_s^\nu(\Psi_s(x))
		-D_i\bigl[U_s(\Psi_s(x))\bigr]\sigma_s^{i\nu}(x).
		\label{eq:inverse-pullback-W}
	\end{align}

	\textnormal{(iii) Forward-flow composition.}
	Suppose that $(v,A,B,C)$ satisfies condition
	$\mathrm{(IW)}_{n_*}$. Then, 
	\begin{align}
		\mathrm{d}v_s(X_s^{t,x})
		={}&
		\left[
		A_s+L_sv_s
		+\sum_{\nu=1}^r
		D_iC_s^\nu\,\sigma_s^{i\nu}
		\right](X_s^{t,x})\,\mathrm{d}s
		+B_s(X_s^{t,x})^\top\,\mathrm{d}Y_s
		\notag\\
		&+\sum_{\nu=1}^r
		\left[
		C_s^\nu+D_iv_s\,\sigma_s^{i\nu}
		\right](X_s^{t,x})\,\mathrm{d}W_s^\nu.
		\label{eq:forward-flow-IW}
	\end{align}

	The inverse composition formula \eqref{eq:inverse-pullback-formula}
	is understood in the spatial weak sense.
	For each fixed deterministic $t\in[0,T]$, the forward composition
	formula \eqref{eq:forward-flow-IW} holds $\Q$-almost surely for
	Lebesgue-a.e.\ starting point $x\in\R^d$, simultaneously for all
	$s\in[t,\theta]$.
\end{lemma}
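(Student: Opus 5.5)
The plan is to prove (i) by an Itô--Wentzell consistency argument on the identity $\Psi_s(\Phi_s(y))=y$. We then prove (ii) and (iii) by spatial mollification, applying the classical Itô--Wentzell formula and passing to the limit under the localization built into condition $\mathrm{(IW)}_{n_*}$.

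For \textnormal{(i)}, we would first obtain that $\Psi_s(x)$ is a continuous semimartingale in $s$ with $C^{n_*-1}$ spatial regularity. This follows from Lemma~\ref{98} together with Kunita's representation of the inverse of a $C^{n_*}$-diffeomorphism flow, e.g. \citet[Theorem~4.2.10 / Section~4.6]{Kunita1990}. The stopped coefficients $\chi^0_s b$ and $\chi^0_s\sigma$ have bounded derivatives; the linear growth in $u_s$ is handled by localizing at $\tau_k$ and at the exit times of $\Phi_s(y)$ from large balls. Granted a representation $\dd\Psi_s=\beta_s\dd s+\zeta_s\dd W_s$ with spatially $C^1$ coefficients, we apply the generalized Itô formula to the constant process $\Psi_s(\Phi_s(y))\equiv y$. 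The martingale part must vanish, which gives
\[
\zeta^{\ell\nu}(\Phi)+D_i\Psi^\ell(\Phi)\sigma^{i\nu}(\Phi)=0,
\]
and hence \eqref{eq:inverse-flow-diffusion} after substituting $x=\Phi_s(y)$, since $\Phi_s$ is onto. The drift must also vanish, which gives
\[
\beta^\ell+D_i\Psi^\ell b^i+\tfrac12a^{ip}D_{ip}\Psi^\ell+D_p\zeta^{\ell\nu}\sigma^{p\nu}=0 .
\]
Inserting $D_p\zeta^{\ell\nu}=-D_{ip}\Psi^\ell\sigma^{i\nu}-D_i\Psi^\ell D_p\sigma^{i\nu}$ turns the second-order term into $-\tfrac12 a^{ip}D_{ip}\Psi^\ell+a^{ip}D_{ip}\Psi^\ell$. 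This yields exactly \eqref{eq:inverse-flow-drift}.

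For \textnormal{(ii)}, we fix $M$ and work up to $\kappa_j^M\wedge\varsigma_M$, where $\varsigma_M$ is the first time $\Psi_s(x)$ leaves $B_{M-1}$ for $x$ in a compact set. We mollify, setting $U^\varepsilon=U*\rho_\varepsilon$ and similarly $A^\varepsilon,\tilde A^\varepsilon,C^\varepsilon$; these satisfy \eqref{eq:generic-parameter-semimartingale} pointwise in $x$, since the pairing identity holds for $\rho_\varepsilon(x-\cdot)$. The classical Itô--Wentzell formula \citep[Theorem~3.3.1]{Kunita1990}, applied to $U^\varepsilon_s(\Psi_s(x))$ with \eqref{eq:inverse-flow-SDE}, gives \eqref{eq:inverse-pullback-formula}. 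Its drift contains $A(\Psi)$, the first-order term $DU(\Psi)\beta$, the second-order term $\tfrac12D^2U(\Psi)\zeta\zeta^\top$, and the cross-variation term $DC(\Psi)\zeta$. Its $W$-integrand is
\[
C(\Psi)+D_\ell U(\Psi)\zeta^{\ell\nu}=C(\Psi)-D_\ell U(\Psi)D_i\Psi^\ell\sigma^{i\nu}=C(\Psi)-D_i[U(\Psi)]\sigma^{i\nu},
\]
by the chain rule. There is no $Y$-cross term because $\Psi$ is driven by $W$ only.

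Passing $\varepsilon\to0$ uses the fact that $n_*-2>d/2$. Thus $H^{n_*}(B_M)\subset C^2$, $H^{n_*-1}(B_M)\subset C^1$ and $H^{n_*-2}(B_M)\subset C$, so the mollified fields converge uniformly on $B_M$ in the norms of \eqref{eq:IW-local-Sobolev-norms}. The flow factors $\beta$, $\zeta$ and $D\Psi$ are continuous and, by Lemma~\ref{lem:flow-estimates}, have all moments on the localized event. Dominated convergence handles the $\dd s$ terms and BDG handles the stochastic integrals. Finally we pair with $\varphi\in C_c^\infty$ and let $j,M\to\infty$ to get the weak formulation.

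Part \textnormal{(iii)} follows the same route with $X^{t,x}$ in place of $\Psi_s(x)$. The Itô--Wentzell cross term is now $D_iC^\nu\sigma^{i\nu}$, and the generator $L_sv$ comes from $\dd X=b\,\dd s+\sigma\,\dd W$. The only difference is that the limit $\varepsilon\to0$ is controlled in $L^1(\dd x)$ over the starting point. We obtain almost sure convergence along a subsequence for Lebesgue-a.e.\ $x$, uniformly in $s$, which explains the ``a.e.\ $x$'' qualifier.

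The main obstacle is this limit step. The fields $A,\tilde A,C$ are controlled only in local Sobolev norms up to the stopping times $\kappa_j^M$, and the flow coefficients are unbounded, growing with $|u_s|$ and $|x|$. One therefore needs a localization that simultaneously keeps $\Psi_s(x)$ or $X^{t,x}_s$ inside $B_M$ and keeps the Sobolev norms integrable, and then must show that the localizing times exhaust $[0,\theta]$. For this we would combine $\kappa_j^M\uparrow\theta$ with the exit times and the a.s.\ continuity of the flows.
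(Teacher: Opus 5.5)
Your proposal is correct and follows essentially the same route as the paper: It\^o--Wentzell applied to $\Psi_s(\Phi_s(y))=y$ for (i), and for (ii)--(iii) localized mollification, the classical It\^o--Wentzell formula, a limit $\varepsilon\to0$ via local Sobolev embedding ($n_*-2>d/2$), and then removal of the localization. The one step to tighten is your appeal to Lemma~\ref{lem:flow-estimates} for moments of $\beta,\zeta,D\Psi$. That lemma controls $D^j\Psi_s$ only along the moving points $\Phi_s(y)$, and $\beta,\zeta$ carry factors of size $1+|u_s|$, which have no pointwise-in-time moments. The paper avoids this by also stopping when $\|\Psi_s\|_{C^2(B_R)}$ and the Jacobian bounds exceed $M$, so that $\sup_{B_R}(|\beta_s|+|\zeta_s|^2)\le C_{R,M}(1+|u_s|^2)$ and the energy bound $\int_0^\theta|u_s|^2\,\mathrm{d}s\le k$ closes the estimates. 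Alternatively, pathwise dominated convergence combined with convergence in probability of the stochastic integrals suffices.
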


\begin{proof}

The coefficients of the inverse-flow dynamics are obtained by
applying the It\^o--Wentzell formula to the identity
$\Psi_s(\Phi_s(y))=y$; see \citet[Section~4.6]{Kunita1990}.
This gives 
\begin{align*}
 \zeta_s^{\ell\nu}(\Phi_s)
 +D_i\Psi_s^\ell(\Phi_s)\sigma_s^{i\nu}(\Phi_s)&=0,\\
 \beta_s^\ell(\Phi_s)+D_i\Psi_s^\ell(\Phi_s)b_s^i(\Phi_s)
 +\tfrac12a_s^{ip}(\Phi_s)D_{ip}\Psi_s^\ell(\Phi_s)
 +\sum_{\nu=1}^rD_i\zeta_s^{\ell\nu}(\Phi_s)
                  \sigma_s^{i\nu}(\Phi_s)&=0.
 \end{align*}
Substitution of
\[
 D_i\zeta_s^{\ell\nu}
 =-D_{ip}\Psi_s^\ell\sigma_s^{p\nu}
  -D_p\Psi_s^\ell D_i\sigma_s^{p\nu}
\]
proves \eqref{eq:inverse-flow-diffusion}-\eqref{eq:inverse-flow-drift}.

We now justify the passage from the classical smooth
It\^o--Wentzell formula to the localized Sobolev setting.  Fix $R<M$ and $j\ge1$.  Let
$\vartheta=\vartheta_{j,R,M}$ be the minimum of $\theta$,
$\kappa_j^{M+2}$ from \eqref{eq:IW-local-Sobolev-norms}, the time when
$\Psi_s(B_R)$ leaves $B_M$, and the time when
$\|\Psi_s\|_{C^2(B_R)}+\|\det D\Psi_s\|_{C(B_R)}
 +\|(\det D\Psi_s)^{-1}\|_{C(B_R)}$ exceeds $M$.  
 
  Choose
$
 0<\delta<\min\left\{1,n_*-2-\frac d2\right\}$ and define 
$ V^{\varepsilon,M}:=\varrho_\varepsilon*(\chi_MV),
 V\in\{U,A,B,C\},
$
where $\chi_M\in C_c^\infty(B_{M+2})$ satisfies
$\chi_M=1$ on $B_{M+1}$, and
$\varrho_\varepsilon(x):=\varepsilon^{-d}\varrho(x/\varepsilon)$
for a fixed nonnegative function $\varrho\in C_c^\infty(B_1)$
with $\int_{\R^d}\varrho(x)\,\mathrm{d}x=1$.
 
For $0<\varepsilon<1$, the standard
mollifier estimate and Sobolev embeddings
\citep[Chapter~4]{AdamsFournier2003} give
\begin{align*}
 \|U_s^{\varepsilon,M}-U_s\|_{C^2(B_M)}
 &\le C_M\varepsilon^\delta\|U_s\|_{H^{n_*}(B_{M+2})},\\
 \|A_s^{\varepsilon,M}-A_s\|_{C(B_M)}
 &\le C_M\varepsilon^\delta\|A_s\|_{H^{n_*-2}(B_{M+2})},\\
 \|B_s^{\varepsilon,M}-B_s\|_{C(B_M)}
 &\le C_M\varepsilon^\delta
   \|B_s\|_{H^{n_*-1}(B_{M+2})},\\
 \|C_s^{\varepsilon,M}-C_s\|_{C^1(B_M)}
 &\le C_M\varepsilon^\delta
   \|C_s\|_{H^{n_*-1}(B_{M+2})}.
\end{align*}
Hence, by \eqref{eq:IW-local-Sobolev-norms}, as $\varepsilon \to 0$, we have  
\begin{align}
 &\E^\Q\esssup_{s\le\vartheta}
   \|U_s^{\varepsilon,M}-U_s\|_{C^2(B_M)}^2
 +\E^\Q\int_0^\vartheta
   \|A_s^{\varepsilon,M}-A_s\|_{C(B_M)}\dd s
 \notag\\
 &\quad+\E^\Q\int_0^\vartheta
 \left(\|B_s^{\varepsilon,M}-B_s\|_{C(B_M)}^2
 +\|C_s^{\varepsilon,M}-C_s\|_{C^1(B_M)}^2\right)\dd s
 \longrightarrow0.
 \label{eq:IW-mollification-limit}
\end{align}

Applying the classical formula
\citep[Theorem~3.1]{Krylov2011} to
$U_s^{\varepsilon,M}(\Psi_s(x))$ yields
\eqref{eq:inverse-pullback-formula} with $U$ replaced by
$U^{\varepsilon,M}$ and coefficients
$\widehat A^{\varepsilon,M}$, $\widehat B^{\varepsilon,M}$,
and $\widehat C^{\varepsilon,M}$, obtained by substituting
the mollified fields into
\eqref{eq:inverse-pullback-drift}--\eqref{eq:inverse-pullback-W}.

On the stochastic interval $[0,\vartheta]$, the composition map
$f\mapsto f\circ\Psi_s$ is bounded from $C^q(B_M)$ to
$C^q(B_R)$, uniformly in $s$, for $q=0,1,2$, and
\[
 \sup_{x\in B_R}(|\beta_s(x)|+|\zeta_s(x)|^2)
 \le C_{R,M}(1+|u_s|^2),\qquad
 \int_0^\theta(1+|u_s|^2)\dd s\le T+k.
\]
Thus, \eqref{eq:IW-mollification-limit}, Cauchy--Schwarz, and BDG give:
for $\varphi\in C_c^\infty(B_R)$,
\begin{align*}
 \E^\Q\int_0^\vartheta
  |\langle\widehat A_s^{\varepsilon,M}-\widehat A_s,
          \varphi\rangle|\dd s&\longrightarrow0,\\
 \E^\Q\sup_{0\le q\le\vartheta}\left|
  \int_0^q\langle\widehat B_s^{\varepsilon,M}-\widehat B_s,
          \varphi\rangle^\top\dd Y_s
 +\sum_{\nu=1}^r\int_0^q
  \langle\widehat C_s^{\varepsilon,M,\nu}-\widehat C_s^\nu,
          \varphi\rangle\dd W_s^\nu\right|^2
 &\longrightarrow0.
\end{align*}
Letting $\varepsilon\downarrow0$ proves
\eqref{eq:inverse-pullback-formula}-\eqref{eq:inverse-pullback-W}.  The same localization and mollification argument with $\Phi_{t,s}$
proves \eqref{eq:forward-flow-IW}.

Fix $R<\infty$, for each fixed integer $M>R$, condition
$\mathrm{(IW)}_{n_*}$ gives
\[
\kappa_j^{M+2}\uparrow\theta
\qquad\text{as }j\to\infty,\quad\Q\text{-a.s.}
\]
Thus, letting $j\to\infty$ removes the localization imposed by
$\kappa_j^{M+2}$.

Next, the inverse-flow regularity in Lemma~\ref{98} implies that
\[
\sup_{s\le\theta}\|\Psi_s\|_{C^2(B_R)}
+\sup_{s\le\theta}\bigl(
\|\det D\Psi_s\|_{C(B_R)}
+\|(\det D\Psi_s)^{-1}\|_{C(B_R)}
\bigr)<\infty
\qquad\Q\text{-a.s.}
\]
Consequently, for almost every sample path, all sufficiently large
$M$ ensure that $\Psi_s(B_R)\subset B_M$ and that the above norms
remain below $M$ throughout $[0,\theta]$. Hence
\[
\lim_{M\to\infty}\lim_{j\to\infty}
\vartheta_{j,R,M}=\theta
\qquad\Q\text{-a.s.}
\]
This removes the localizations, continuity extends the identities
to the terminal time $\theta$ and completes the proof.

The above convergence estimates, together with stochastic Fubini
\citep[Lemma~2.6]{Krylov2011}, establish the inverse composition
formula \eqref{eq:inverse-pullback-formula} in the spatial weak sense.

For each fixed deterministic $t\in[0,T]$, Fubini's theorem and
continuity in time, followed by a countable exhaustion of $\R^d$
by balls, show that the integral form of
\eqref{eq:forward-flow-IW} holds $\Q$-almost surely for
Lebesgue-a.e.\ starting point $x\in\R^d$, simultaneously for all
$s\in[t,\theta]$.

\end{proof}

\begin{proposition}[Stochastic-characteristic equation]
\label{prop:characteristic-equation}
The fields in \eqref{eq:reconstruct-limit} satisfy
\begin{equation}
 \dd f_s(x)
 =-\left[L_sf_s(x)+h(x,u_s)^\top g_s(x)\right]\dd s
   +g_s(x)^\top\dd Y_s,\qquad f_\theta=\gamma,
 \label{eq:characteristic-BSPDE}
\end{equation}
in the spatial weak sense on $[0,\theta]$. Moreover,
\begin{equation}
 \widetilde G_s^{0,y}
 =\sigma(\Phi_s(y),u_s)^\top\nabla f_s(\Phi_s(y))
 \quad\text{for }\dd\Q\otimes\dd s\otimes\dd y\text{-a.e. }(\omega,s,y)
 \text{ on }[0,\theta],
 \label{eq:tilde-identification}
\end{equation}
and $g$, extended by zero after $\theta$, admits a jointly measurable
$\mathbb G^k$-predictable version, and hence an
$\mathbb F^Y$-predictable version, locally square integrable in space.
\end{proposition}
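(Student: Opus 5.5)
The plan is to apply the inverse-flow composition formula of Lemma~\ref{lem:flow-composition}(ii) to the parameter field $U_s(y):=F_s^{0,y}$ and then remove the auxiliary $W$-term using the observation measurability from Proposition~\ref{prop:diagonal-Y}. By \eqref{eq:untruncated-BSDE} with $t=0$, for each $y$ we have
\[
\dd U_s(y)=A_s(y)\dd s+B_s(y)^\top\dd Y_s+\sum_{\nu}C_s^\nu(y)\dd W_s^\nu,
\]
where
\[
A_s(y)=-h(\Phi_s(y),u_s)^\top G_s^{0,y},\qquad B=G^{0,\cdot},\qquad C=\widetilde G^{0,\cdot},
\]
all on $[0,\theta]$. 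Since $\widetilde G$ is $\R^r$-valued, we index its components by $\nu$.

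\emph{Step 1: condition $\mathrm{(IW)}_{n_*}$.} For the pairing with $\varphi\in C_c^\infty$, we integrate the deterministic-parameter BSDEs against $\varphi(y)\dd y$ and use stochastic Fubini, justified by the jointly predictable versions in Proposition~\ref{prop:uniform-parameter}. For the local Sobolev norms, \eqref{eq:local-parameter-Sobolev} controls $U$ in $H^{n_*}(B_M)$ and $B,C$ in $H^{n_*}(B_M)$. For $A$, we apply the Leibniz rule: derivatives of $h(\Phi_s(\cdot),u_s)$ of positive order are polynomials in $D^j\Phi_s$, and the zero-order factor is bounded by $C(1+\sup_{B_M}|\Phi_s|+|u_s|)$. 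We take $\kappa_j^M$ to be the first time $\sup_{y\in B_M}\sum_{i\le n_*}|D^i\Phi_s(y)|+|\Phi_s(y)|$ exceeds $j$, capped at $\theta$. Cauchy--Schwarz together with $\int_0^\theta|u_s|^2\dd s\le k$ then gives $\E\int_0^{\kappa_j^M}\|A_s\|_{H^{n_*-2}(B_M)}\dd s<\infty$, and $\kappa_j^M\uparrow\theta$ by continuity of the flow (Lemma~\ref{98}).

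Lemma~\ref{lem:flow-composition}(ii) then gives, weakly in $x$,
\[
\dd f_s=\widehat A_s\dd s+g_s^\top\dd Y_s+\sum_\nu\widehat C_s^\nu\dd W_s^\nu,\qquad
\widehat C_s^\nu=\widetilde G_s^{0,\nu}(\Psi_s)-D_if_s\,\sigma_s^{i\nu}.
\]

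\emph{Step 2: $\widehat C=0$ (the main obstacle).} Fix $\varphi$ and set $X:=\ip{f_{\cdot\wedge\theta}}{\varphi}$. By Proposition~\ref{prop:diagonal-Y} (continuity and $\mathbb G^k$-adaptedness) and boundedness of $f$, $X$ is a bounded continuous $\mathbb G^k$-adapted process. It is also an $\mathbb F$-semimartingale by Step 1. By Stricker's theorem it is a $\mathbb G^k$-semimartingale, and in fact a special one, being bounded and continuous, so $X=X_0+N+V$ with $V$ continuous of finite variation. By Lemma~\ref{lem:reference-stopping-functional}, $N=\int_0^{\cdot\wedge\tau_k}Z^{\varphi\top}\dd Y$. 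Quadratic covariation is a pathwise limit of sums and so does not depend on the filtration. Hence $[X,W^\nu]=[N,W^\nu]+[V,W^\nu]=0$. Computing the same covariation in $\mathbb F$ gives $\int_0^{\cdot\wedge\theta}\ip{\widehat C_s^\nu}{\varphi}\dd s$. Therefore $\ip{\widehat C^\nu}{\varphi}=0$ for $\dd\Q\otimes\dd s$-a.e.\ $(\omega,s)$ on $[0,\theta]$. Running over a countable dense family of $\varphi$ gives $\widehat C=0$ a.e. Evaluating at $x=\Phi_s(y)$ and using that the change of variables preserves null sets ($J_s>0$), this is exactly \eqref{eq:tilde-identification}.

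The delicate points in this step are the special-semimartingale and filtration-shrinkage argument, and making sure that $\widehat C$ is paired correctly in the weak sense.

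\emph{Step 3: drift identification and predictability of $g$.} Substituting $C^\nu(y)=\sigma^\nu(\Phi_s(y))^\top\nabla f_s(\Phi_s(y))$, i.e.\ $C^\nu\circ\Psi_s=D_if_s\,\sigma_s^{i\nu}$, into \eqref{eq:inverse-pullback-drift}, we express $D_\ell C^\nu(\Psi_s)\zeta^{\ell\nu}$ and $D_\ell U(\Psi_s)$, $D_{\ell q}U(\Psi_s)$ through derivatives of $f=U\circ\Psi$ via the chain rule. Combining with \eqref{eq:inverse-flow-diffusion}--\eqref{eq:inverse-flow-drift}, the terms involving $D\sigma$ and the second derivatives of $\Psi$ cancel algebraically. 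What remains is
\[
\widehat A_s=-b_s\cdot\nabla f_s-\tfrac12a_s^{ij}D_{ij}f_s-h(\cdot,u_s)^\top g_s,
\]
which proves \eqref{eq:characteristic-BSPDE}, with $f_\theta=\gamma$ by construction.

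For predictability, the $\mathbb F$-covariation gives $[X,Y^a]=\int_0^{\cdot\wedge\theta}\ip{g_s^a}{\varphi}\dd s$, while the $\mathbb G^k$-decomposition gives $[X,Y^a]=\int_0^{\cdot\wedge\theta}Z^{\varphi,a}_s\dd s$. Hence $\ip{g_s}{\varphi}=Z_s^\varphi$ a.e., and the right-hand side is $\mathbb G^k$-predictable. Choosing a countable dense $\{\varphi_n\}$ and using the local $H^{n_*}$-integrability of $g$ (transferred from $G^{0,\cdot}$ through Lemma~\ref{lem:weighted-multiplier}), we reconstruct a $\mathcal P^{\mathbb G^k}\otimes\mathcal B(\R^d)$-measurable version of $g$, extended by zero after $\theta$. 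We obtain it as the limit of mollified versions $\sum_n$-expansions, for instance $g_s*\varrho_\varepsilon(x)=\ip{g_s}{\varrho_\varepsilon(x-\cdot)}$ approximated along a countable family. Since $\mathbb G^k\subset\mathbb F^Y$, this version is also $\mathbb F^Y$-predictable.
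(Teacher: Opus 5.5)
Your proof is correct. It follows the paper's skeleton for the hard part: $\mathrm{(IW)}_{n_*}$ for $(F^{0,\cdot},A,G^{0,\cdot},\widetilde G^{0,\cdot})$, the inverse composition formula of Lemma~\ref{lem:flow-composition}(ii), Stricker's theorem plus predictable representation in $\mathbb G^k$ to remove the $W$-integrand, and the covariation with $Y$ for the predictable version of $g$.

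Where you genuinely differ is the identification of the drift and of $\widetilde G$. The paper first verifies $\mathrm{(IW)}_{n_*}$ for the Eulerian quadruple $(f,\mathfrak a,g,q)$, using Sobolev chain-rule bounds on stopped intervals. It then applies the forward formula \eqref{eq:forward-flow-IW} to $f_s(\Phi_s(y))$ and compares with \eqref{eq:untruncated-BSDE}. Both $\mathfrak a=-L_sf_s-h^\top g_s$ and \eqref{eq:tilde-identification} then follow from uniqueness of the semimartingale decomposition, and \eqref{eq:inverse-pullback-drift} is never evaluated. You instead read \eqref{eq:tilde-identification} directly off $\widehat C=0$. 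This is legitimate, since $\widehat C^\nu_s(\Phi_s(y))=\widetilde G_s^{0,y,\nu}-(\sigma_s^\top\nabla f_s)^\nu(\Phi_s(y))$. You then evaluate \eqref{eq:inverse-pullback-drift} by hand, and that computation does close:
\begin{itemize}
\item the $D\sigma\,\sigma$ term of $\beta$ cancels the first-order part of $D_\ell C^\nu\zeta^{\ell\nu}$;
\item the $D^2\Psi$ term of $\beta$ cancels the $D^2\Phi$ part of $\frac12D_{\ell q}U(\Psi_s)\zeta^{\ell\nu}\zeta^{q\nu}$, via $D_{\ell q}\Phi_s^i(\Psi_s)D_a\Psi_s^\ell D_b\Psi_s^q=-D_\ell\Phi_s^i(\Psi_s)D_{ab}\Psi_s^\ell$ (differentiate $\Phi_s\circ\Psi_s=\mathrm{id}$ twice);
\item the second-order terms combine as $\frac12a^{ij}D_{ij}f_s-a^{ij}D_{ij}f_s$.
\end{itemize}

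Your route is more elementary: it uses only formula (ii) and avoids both the forward formula and the Eulerian $\mathrm{(IW)}$ bounds. Its price is twofold. First, the cancellation above must actually be written out rather than asserted. Second, you need a separate remark that $\widehat C_s\in L^2_{\mathrm{loc}}$ for a.e.\ $(\omega,s)$, which follows from $\widetilde G_s^{0,\cdot},f_s\in H^{n_*}_{\mathrm{loc}}$; only then does vanishing against a countable family force $\widehat C=0$. The paper gets this integrability from its Eulerian bounds.

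The remaining differences are equally valid: Leibniz with flow-derivative stopping times instead of the $H^{n_*-2}$ multiplication-algebra bound for $A$, and mollification on a countable grid instead of a Parseval expansion for the predictable version of $g$. Finally, ``$f_\theta=\gamma$ by construction'' should be justified as in the paper: establish $F_\theta^{0,y}=\gamma(\Phi_\theta(y))$ on a countable dense set of $y$ and extend by spatial continuity, so that $y=\Psi_\theta(x)$ can be substituted.
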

\begin{proof}
Set, on $[0,\theta]\times\R^d$,
\[
 U_s(y)=F_s^{0,y},\quad
 A_s(y)=-(H_s^{0,y})^\top G_s^{0,y},\quad
 B_s(y)=G_s^{0,y},\quad C_s(y)=\widetilde G_s^{0,y}.
\]
For every finite $M$, Proposition~\ref{prop:uniform-parameter} and
Fubini give
\[
 \E^\Q\esssup_{s\le\theta}\|U_s\|_{H^{n_*}(B_M)}^2
 +\E^\Q\int_0^\theta
   \bigl(\|B_s\|_{H^{n_*-1}(B_M)}^2
         +\|C_s\|_{H^{n_*-1}(B_M)}^2\bigr)\dd s<\infty.
\]
Write $a=n_*-2>d/2$. The growth assumption and the stopped control
energy give the order-zero part of
\[
 \E^\Q\int_0^\theta\|H_s^{0,\cdot}\|_{H^a(B_M)}^2\dd s<\infty;
\]
the positive-order parts follow from the chain rule, the bounded spatial
derivatives of $h$, and Lemma~\ref{lem:flow-estimates}.
Since $H^a(B_M)$ is a multiplication algebra, Cauchy--Schwarz yields
\begin{align*}
 \E^\Q\int_0^\theta\|A_s\|_{H^a(B_M)}\dd s
 &\le C_M\left(\E^\Q\int_0^\theta
                   \|H_s^{0,\cdot}\|_{H^a(B_M)}^2\dd s\right)^{1/2}\\
 &\qquad\times\left(\E^\Q\int_0^\theta
                   \|G_s^{0,\cdot}\|_{H^a(B_M)}^2\dd s\right)^{1/2}
 <\infty.
\end{align*}
Thus only time-$L^1$ integrability of the drift is used. Pairing
\eqref{eq:untruncated-BSDE} with a test function and applying stochastic
Fubini proves \eqref{eq:generic-parameter-semimartingale}. Hence
$(U,A,B,C)$ satisfies $\mathrm{(IW)}_{n_*}$.

The inverse formula of Lemma~\ref{lem:flow-composition} gives
\begin{equation}
 \dd f_s(x)
 =\mathfrak a_s(x)\dd s+g_s(x)^\top\dd Y_s
  +q_s(x)^\top\dd W_s,
 \label{eq:preliminary-eulerian-decomp}
\end{equation}
where $\mathfrak a$ is \eqref{eq:inverse-pullback-drift} for the above
fields, and
\begin{equation}
 q_s^\nu(x)=\widetilde G_s^{0,\Psi_s(x),\nu}
            -D_if_s(x)\sigma_s^{i\nu}(x),
 \qquad \nu=1,\ldots,r.
 \label{eq:q-explicit}
\end{equation}
Enlarge the spatial localizations in Lemma~\ref{lem:flow-composition}
to bound the derivatives of $\Phi$ and $\Psi$ through order $n_*$
and the reciprocal Jacobians on the relevant compact sets. These
stopping times still increase to $\theta$, by the flow regularity.
On a stopped interval where $\Psi_s(B_R)\subset B_M$ and these
bounds are at most $L$, the Sobolev chain rule gives, with a
constant depending on $L,R,M$,
\begin{align*}
 \|f_s\|_{H^{n_*}(B_R)}
 &\le C_{L,R,M}\|U_s\|_{H^{n_*}(B_M)},\\
 \|g_s\|_{H^{n_*-1}(B_R)}
 &\le C_{L,R,M}\|B_s\|_{H^{n_*-1}(B_M)},\\
 \|q_s\|_{H^{n_*-1}(B_R)}
 &\le C_{L,R,M}\bigl[
       \|C_s\|_{H^{n_*-1}(B_M)}
       +(1+|u_s|)\|U_s\|_{H^{n_*}(B_M)}\bigr],\\
 \|\mathfrak a_s\|_{H^{n_*-2}(B_R)}
 &\le C_{L,R,M}\bigl[
       \|A_s\|_{H^{n_*-2}(B_M)}
       +(1+|u_s|^2)\|U_s\|_{H^{n_*}(B_M)}\\
 &\hspace{42mm}+(1+|u_s|)\|C_s\|_{H^{n_*-1}(B_M)}\bigr].
\end{align*}
The parameter-field bounds, $\int_0^\theta(1+|u_s|^2)\dd s\le T+k$,
and Cauchy--Schwarz give the Eulerian condition
$\mathrm{(IW)}_{n_*}$ for $(f,\mathfrak a,g,q)$.
In particular, $g,q$ are locally square integrable in space and time.
In fact, for every
$R<\infty$, changing variables $x=\Phi_s(y)$, applying
Lemma~\ref{lem:weighted-multiplier}, and using
\eqref{eq:uniform-G-parameter} with $q=4$ give, for $a>d+2$,
\[
 \E^\Q\int_0^\theta\int_{B_R}|g_s(x)|^2\dd x\dd s
 \le C_{R,a}\int_{\R^d}\br{y}^{2-a}\dd y<\infty.
\]
Here $\one_{\{|\Phi_s(y)|\le R\}}\le C_{R,a}\br{\Phi_s(y)}^{-a}$,
and H\"older separates the squared time norm of $G^{0,y}$ from
$\sup_{s\le\theta}\br{\Phi_s(y)}^{-a}J_s(y)$.

Choose a countable subset $\mathcal D\subset C_c^\infty(\R^d)$ such that
$\mathcal D\cap C_c^\infty(B_R)$ contains an orthonormal basis of
$L^2(B_R)$ for every integer $R\ge1$; adjoining a countable dense set of
test functions if necessary, take its rational linear span.
For $\varphi\in\mathcal D$ let
$Z_t^\varphi=\langle f_{t\wedge\theta},\varphi\rangle$.
Proposition~\ref{prop:diagonal-Y} and Stricker's theorem
\citep[Theorem~2.1]{FollmerProtter2011} make $Z^\varphi$ a
$\mathbb G^k$-semimartingale. By
Lemma~\ref{lem:reference-stopping-functional}, its local martingale
part is an integral with respect to $Y_{\cdot\wedge\theta}$.
It is therefore orthogonal to $W$, so
\begin{equation}
 0=[Z^\varphi,W^\nu]_t
  =\int_0^{t\wedge\theta}\langle q_s^\nu,\varphi\rangle\dd s.
 \label{eq:q-bracket-zero}
\end{equation}
Countability, density, and local square integrability give
$q=0$ for $\dd\Q\otimes\dd s\otimes\dd x$-almost every point on
$[0,\theta]$.

Let $K^\varphi$ be the $\mathbb G^k$-predictable integrand of the local
martingale part of $Z^\varphi$. Computing the covariation with $Y^j$
from its two decompositions gives
\[
 \int_0^{t\wedge\theta}\langle g_s^j,\varphi\rangle\dd s
 =[Z^\varphi,Y^j]_t
 =\int_0^{t\wedge\theta}K_s^{\varphi,j}\dd s.
\]
Consequently each pairing $\langle g_s,\varphi\rangle$ has a
$\mathbb G^k$-predictable version. For an orthonormal basis
$(e_i^R)_{i\ge1}\subset\mathcal D\cap C_c^\infty(B_R)$, Parseval gives
\[
 \sum_{i=1}^J\langle g_s,e_i^R\rangle e_i^R
 \longrightarrow g_s
 \quad\text{in }L^2(\Omega\times[0,\theta]\times B_R).
\]
Closedness of the predictable subspace produces an
$L^2(B_R;\R^m)$-valued predictable representative. Representatives on
the integer balls agree almost everywhere on overlaps, and hence may
be chosen consistently to give a jointly measurable predictable field.

Since $q=0$, the forward formula of
Lemma~\ref{lem:flow-composition} now yields
\begin{align*}
 \dd F_s^{0,y}
 ={}&[\mathfrak a_s+L_sf_s](\Phi_s(y))\dd s
     +g_s(\Phi_s(y))^\top\dd Y_s\\
 &+[\sigma(\Phi_s(y),u_s)^\top\nabla f_s(\Phi_s(y))]^\top\dd W_s.
\end{align*}
Comparison with \eqref{eq:untruncated-BSDE}, uniqueness of the
semimartingale decomposition, and change of variables by $\Phi_s$ give
\[
 \mathfrak a_s=-L_sf_s-h(\cdot,u_s)^\top g_s,
 \qquad
 \widetilde G_s^{0,y}
 =\sigma(\Phi_s(y),u_s)^\top\nabla f_s(\Phi_s(y)).
\]
Finally, the terminal equation, first on a countable dense set of
parameters and then by spatial continuity, gives
\[
 f_\theta(\Phi_\theta(y))=F_\theta^{0,y}
 =\gamma(\Phi_\theta(y)),\qquad y\in\R^d.
\]
The diffeomorphism property proves $f_\theta=\gamma$.
This establishes \eqref{eq:characteristic-BSPDE} and
\eqref{eq:tilde-identification}. Integrating the former against a test
function gives \eqref{eq:weak-bspde}; continuity in time and a
countable dense set of test functions give a common event on which
the weak identity holds for all times and test functions.
\end{proof}

\subsection{Weighted Sobolev estimates}

For $|\alpha|\le n_*$ and $(v,V)=(f,F)$ or $(g,G)$,
the spatial chain rule gives
\begin{equation}
 (D_x^\alpha v_s)(\Phi_s(y))
 =\sum_{|\beta|\le|\alpha|}
   D_y^\beta V_s^{0,y}\,P_{\alpha,\beta,s}(y),
 \label{eq:chain-f}
\end{equation}
where $P_{\alpha,\beta,s}(y)$ is a polynomial in the components
of $D_y^j\Phi_s(y)$, $1\le j\le|\alpha|$, and the entries of
$[D_y\Phi_s(y)]^{-1}$. This identity is understood for weak
spatial derivatives and holds for $\dd\Q\otimes\dd s\otimes\dd y$
almost every $(\omega,s,y)$ with $s\le\theta$.

\begin{proposition}[Weighted Sobolev estimate]
\label{prop:weighted-Sobolev}
Let $(f,g)$ be the fields defined in
\eqref{eq:reconstruct-limit}. Set $\ell_0=d/2+2$. Then
\begin{equation}
 \E^\Q\esssup_{s\le\theta}\norm{f_s}_{H_{\ell_0}^{n_*}}^2
 +\E^\Q\int_0^\theta
    \norm{g_s}_{H_{\ell_0}^{n_*}(\R^d;\R^m)}^2\dd s
 <\infty.
 \label{eq:weighted-Sobolev-estimate}
\end{equation}
\end{proposition}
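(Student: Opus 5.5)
The plan is to change variables $x=\Phi_s(y)$ in each weighted integral, use the chain rule \eqref{eq:chain-f} to express Eulerian derivatives through parameter derivatives of the BSDE, and then combine the weighted multiplier bound of Lemma~\ref{lem:weighted-multiplier} with the parameter estimates of Proposition~\ref{prop:uniform-parameter}. The weight exponent $2\ell_0=d+4$ is chosen so that the polynomial growth $\br{y}^2$ coming from \eqref{eq:uniform-G-parameter} is integrable against $\br{y}^{-(d+4)}$, since $\int\br{y}^{2-d-4}\dd y<\infty$.

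\emph{Step 1: the $g$ term.} Fix $|\alpha|\le n_*$. By \eqref{eq:chain-f} and the substitution $x=\Phi_s(y)$, whose Jacobian is $J_s(y)$,
\[
\int_{\R^d}|D^\alpha g_s(x)|^2\br{x}^{-2\ell_0}\dd x
\le C\sum_{|\beta|\le|\alpha|}\int_{\R^d}|D_y^\beta G_s^{0,y}|^2\,
\br{\Phi_s(y)}^{-2\ell_0}J_s(y)|P_{\alpha,\beta,s}(y)|^2\dd y .
\]
Next integrate in $s$ over $[0,\theta]$, pull $\sup_{s\le\theta}$ of the multiplier out of the time integral, take $\E^\Q$, and use Tonelli in $y$. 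Hölder with exponents $(2,2)$ then bounds the $y$-integrand by
\[
\bigl\|\textstyle\int_0^T|D^\beta G_s^{0,y}|^2\dd s\bigr\|_{L^2(\Q)}
\cdot\bigl\|\sup_{s\le\theta}\br{\Phi_s(y)}^{-2\ell_0}J_s|P|^2\bigr\|_{L^2(\Q)} .
\]
By \eqref{eq:uniform-G-parameter} with $q=4$, the first factor is at most $C\br{y}^2$. By \eqref{eq:weighted-multiplier} with $m=2\ell_0$, the second factor is at most $C\br{y}^{-d-4}$; note that $|P|^2$ is again a polynomial of the admissible type, since the entries of $[D\Phi_s]^{-1}$ equal $(D\Psi_s)(\Phi_s(y))$. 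The $y$-integrand is therefore bounded by $C\br{y}^{-d-2}$, which is integrable.

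\emph{Step 2: the $f$ term.} This term carries an esssup in time, and that is the main obstacle: the supremum cannot be exchanged with the $y$-integral directly. I would bound
\[
\esssup_{s\le\theta}\int|D^\alpha f_s|^2\br{x}^{-2\ell_0}\dd x
\le C\sum_\beta\int_{\R^d}\sup_{s\le\theta}|D^\beta F_s^{0,y}|^2\cdot\sup_{s\le\theta}\bigl(\br{\Phi_s(y)}^{-2\ell_0}J_s|P|^2\bigr)\dd y .
\]
The $\dd s\otimes\dd y$-a.e.\ validity of \eqref{eq:chain-f} is enough here, because the esssup over $s$ tolerates null sets in time. The Hölder step then uses \eqref{eq:uniform-F-parameter}, which has no $\br{y}$ growth, together with Lemma~\ref{lem:weighted-multiplier}. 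This gives an integrand of order $\br{y}^{-d-4}$, which is integrable. The measurability of the $\sup_s$ inside the integrand comes from the jointly measurable, continuous versions supplied by Proposition~\ref{prop:uniform-parameter}.

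\emph{Step 3: technical checks.} The change of variables is legitimate because $\Phi_s$ is a $C^{n_*}$ diffeomorphism (Lemma~\ref{98}). The weak derivatives of $f_s$ and $g_s$ are identified through the Sobolev chain rule on compact sets, and then on all of $\R^d$ by exhaustion. After $\theta$ the fields are $\gamma$ and $0$, and $\|\gamma\|_{H^{n_*}_{\ell_0}}<\infty$ since $\gamma$ is bounded with bounded derivatives and $\br{x}^{-d-4}$ is integrable. Summing Steps 1 and 2 over $|\alpha|\le n_*$ gives \eqref{eq:weighted-Sobolev-estimate}.
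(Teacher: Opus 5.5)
Your proposal is correct and follows the paper's proof. Both change variables $x=\Phi_s(y)$ and apply the chain rule \eqref{eq:chain-f}. Both take the supremum in $s$ of the multiplier $\br{\Phi_s(y)}^{-2\ell_0}J_s(y)|P_{\alpha,\beta,s}(y)|^2$ and bound it in $L^2(\Q)$ by Lemma~\ref{lem:weighted-multiplier}, then apply H\"older with the $q=4$ estimates of Proposition~\ref{prop:uniform-parameter}, which yields the integrable weights $\br{y}^{-d-4}$ for $f$ and $\br{y}^{-d-2}$ for $g$.
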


\begin{proof}
Fix $\ell>0$ and $|\alpha|\le n_*$. Using the change of variables
$x=\Phi_s(y)$, with $\dd x=J_s(y)\dd y$, and
\eqref{eq:chain-f}, we obtain
\begin{align*}
 &\int_{\R^d}|D_x^\alpha v_s(x)|^2\br{x}^{-2\ell}\dd x\\
 &\qquad\le C_\alpha\sum_{|\beta|\le|\alpha|}
 \int_{\R^d}|D_y^\beta V_s^{0,y}|^2
 |P_{\alpha,\beta,s}(y)|^2
 \br{\Phi_s(y)}^{-2\ell}J_s(y)\dd y.
\end{align*}
Write
\[
 A_{\alpha,\beta}(y)
 :=\sup_{s\le\theta}
 |P_{\alpha,\beta,s}(y)|^2
 \br{\Phi_s(y)}^{-2\ell}J_s(y).
\]
Lemma~\ref{lem:weighted-multiplier}, with $m=2\ell$ and $q=2$,
implies
\[
 \|A_{\alpha,\beta}(y)\|_{L^2(\Q)}
 \le C_{k,\ell,\alpha,\beta}\br{y}^{-2\ell}.
\]
For the first component, Tonelli's theorem, H\"older's inequality,
and the $q=4$ estimate in Proposition~\ref{prop:uniform-parameter}
give
\begin{align*}
 &\E^\Q\esssup_{s\le\theta}
 \int_{\R^d}|D_x^\alpha f_s(x)|^2\br{x}^{-2\ell}\dd x\\
 &\quad\le C_\alpha\sum_{|\beta|\le|\alpha|}
 \int_{\R^d}\|A_{\alpha,\beta}(y)\|_{L^2(\Q)}
 \left(\E^\Q\sup_{s\le\theta}|D_y^\beta F_s^{0,y}|^4\right)^{1/2}
 \dd y\\
 &\quad\le C_{k,\ell,\alpha}\int_{\R^d}\br{y}^{-2\ell}\dd y.
\end{align*}
For the second component, the same argument applied to the
time integral gives
\begin{align*}
 &\E^\Q\int_0^\theta\int_{\R^d}
 |D_x^\alpha g_s(x)|^2\br{x}^{-2\ell}\dd x\dd s\\
 &\quad\le C_\alpha\sum_{|\beta|\le|\alpha|}
 \int_{\R^d}\|A_{\alpha,\beta}(y)\|_{L^2(\Q)}
 \left\{\E^\Q
 \left(\int_0^\theta|D_y^\beta G_s^{0,y}|^2\dd s\right)^2
 \right\}^{1/2}\dd y\\
 &\quad\le C_{k,\ell,\alpha}
 \int_{\R^d}\br{y}^{2-2\ell}\dd y.
\end{align*}
The factor $\br{y}^2$ in the last line is precisely the
square root of the $q=4$ growth bound for the martingale
integrands in Proposition~\ref{prop:uniform-parameter}.
Summing over $|\alpha|\le n_*$ and taking $\ell=\ell_0$ yields
\begin{align}
 &\E^\Q\esssup_{s\le\theta}\norm{f_s}_{H_{\ell_0}^{n_*}}^2
 +\E^\Q\int_0^\theta\norm{g_s}_{H_{\ell_0}^{n_*}}^2\dd s
 \notag\\
 &\qquad\le C_k\int_{\R^d}
 \left(\br{y}^{-2\ell_0}+\br{y}^{2-2\ell_0}\right)\dd y<\infty,
 \label{eq:explicit-weight-integral}
\end{align}
since $2\ell_0=d+4$. This proves
\eqref{eq:weighted-Sobolev-estimate} directly for $(f,g)$.
\end{proof}

\begin{proposition}[Weighted pointwise estimates]
\label{prop:weighted-pointwise}
With $\ell_0=d/2+2$, for every $\lambda\ge\ell_0$ and every
$|\alpha|\le2$, \eqref{eq:weighted-sup-2} holds.
\end{proposition}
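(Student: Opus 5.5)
Since $\br{x}^{-2\lambda}\le\br{x}^{-2\ell_0}$ for $\lambda\ge\ell_0$, it suffices to treat $\lambda=\ell_0$. I would derive the pointwise bound from the weighted Sobolev estimate of Proposition~\ref{prop:weighted-Sobolev} via a \emph{localized} Sobolev embedding on unit balls, with the weight frozen on each ball. Fix $|\alpha|\le2$ and set $w=D^\alpha v_s$ with $v=f$ or $g$. For every $x_0\in\R^d$, the unit ball $B_1(x_0)$ satisfies $\br{x}\le C\br{x_0}$ and $\br{x}\ge c\br{x_0}$ on it, with constants independent of $x_0$. Because $n_*-2>d/2$, the embedding $H^{n_*-2}(B_1(x_0))\hookrightarrow C(\overline{B_1(x_0)})$ holds with a translation-invariant constant. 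This gives
\[
\frac{|w(x_0)|^2}{\br{x_0}^{2\ell_0}}
\le C\br{x_0}^{-2\ell_0}\sum_{|\beta|\le n_*-2}\int_{B_1(x_0)}|D^\beta w(x)|^2\dd x
\le C'\sum_{|\beta|\le n_*}\int_{B_1(x_0)}|D^\beta v_s(x)|^2\br{x}^{-2\ell_0}\dd x,
\]
and the right side is bounded by $C'\|v_s\|_{H_{\ell_0}^{n_*}}^2$. Taking the supremum over $x_0$ yields
\[
\sup_x\frac{|D^\alpha v_s(x)|^2}{\br{x}^{2\ell_0}}\le C\|v_s\|_{H_{\ell_0}^{n_*}}^2
\]
for every $s$ at which $v_s$ belongs to the weighted space.

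Next I would check the measurability and representative issues. The remark after Theorem~\ref{thm:main-bspde} says the suprema are taken over the continuous representatives of $D^\alpha f_s$ and $D^\alpha g_s$. For $f$, the previous display holds for a.e.\ $s$ (the set where $\|f_s\|_{H_{\ell_0}^{n_*}}$ is finite and the weak derivatives are those of the $C^2$ representative). Taking the essential supremum over $s\le\theta$ and expectations, Proposition~\ref{prop:weighted-Sobolev} gives the $f$-part of \eqref{eq:weighted-sup-2}. For $g$, I would integrate the pointwise inequality over $s\in[0,\theta]$ and use the second term of \eqref{eq:weighted-Sobolev-estimate}. Joint measurability of $(\omega,s)\mapsto\sup_x(\cdot)$ follows because, by continuity, the supremum may be restricted to a countable dense set of $x$. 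That in turn uses the jointly measurable representatives from Propositions~\ref{prop:uniform-parameter} and~\ref{prop:characteristic-equation}, combined with the fact that mollified derivatives converge locally uniformly to the continuous representative.

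The main obstacle is making the constant in the local embedding uniform in $x_0$ while staying inside the weighted norm. This is exactly where the comparability $\br{x}\asymp\br{x_0}$ on unit balls is used. Everything else is bookkeeping, once one verifies that the esssup in \eqref{eq:weighted-sup-2} matches the esssup in \eqref{eq:weighted-Sobolev-estimate} on a full-measure set of times.
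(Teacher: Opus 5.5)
Your proposal is correct and follows essentially the same route as the paper. Both arguments use a translation-invariant local Sobolev embedding on unit-size domains (your balls $B_1(x_0)$, the paper's lattice cubes $Q_z$) together with the comparability $\br{x}\asymp\br{x_0}$ on each domain, then apply Proposition~\ref{prop:weighted-Sobolev} and finish with monotonicity in $\lambda$; your extra measurability bookkeeping is consistent with, and slightly more explicit than, the paper's argument.
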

\begin{proof}
	Fix a multi-index $\alpha$ with $|\alpha|\le2$, and let
	$v\in H_{\ell_0}^{n_*}(\R^d)$.
	For $z\in\mathbb Z^d$, define $Q_z=z+(-1,1)^d$.
	Since $n_*-|\alpha|>d/2$, the Sobolev embedding
	\citep[Theorem~4.12, Part~I, Case~A, p.~85]{AdamsFournier2003}
	gives
	\begin{equation}
		\sup_{x\in Q_z}|D^\alpha v(x)|^2
		\le C\sum_{|\beta|\le n_*}
		\int_{Q_z}|D^\beta v(x)|^2\,\mathrm{d}x,
		\label{eq:local-Sobolev}
	\end{equation}
	where we use the continuous representative of $D^\alpha v$.
	By translation, the constant $C$ is independent of $z$.
	
	There exists a constant $c\ge1$, independent of $z$, such that
	\[
	c^{-1}\br{z}\le\br{x}\le c\br{z},
	\qquad x\in Q_z.
	\]
	Since the cubes $(Q_z)_{z\in\mathbb Z^d}$ cover $\R^d$,
	\eqref{eq:local-Sobolev} implies
	\begin{align*}
		\sup_{x\in\R^d}
		\frac{|D^\alpha v(x)|^2}{\br{x}^{2\ell_0}}
		&\le C\sup_{z\in\mathbb Z^d}\br{z}^{-2\ell_0}
		\sum_{|\beta|\le n_*}
		\int_{Q_z}|D^\beta v(x)|^2\,\mathrm{d}x\\
		&\le C\sup_{z\in\mathbb Z^d}
		\sum_{|\beta|\le n_*}
		\int_{Q_z}
		|D^\beta v(x)|^2\br{x}^{-2\ell_0}\,\mathrm{d}x\\
		&\le C\norm{v}_{H_{\ell_0}^{n_*}}^2.
	\end{align*}
	The same estimate holds for $\R^m$-valued functions by applying
	the scalar estimate to each component and summing.
	
	Applying this estimate to $v=f_s(\omega,\cdot)$ and
	$v=g_s(\omega,\cdot)$, taking the essential supremum in time
	for $f$ and integrating in time for $g$, and then taking
	expectations, Proposition~\ref{prop:weighted-Sobolev} yields
	\eqref{eq:weighted-sup-2} for $\lambda=\ell_0$.
	Finally, for $\lambda\ge\ell_0$,
	$\br{x}^{-2\lambda}\le\br{x}^{-2\ell_0}$, which proves the
	remaining cases.
\end{proof}

\begin{proposition}[Local time continuity of $f$]
	\label{prop:local-time-continuity}
	Let $(f,g)$ be a weighted Sobolev solution of
	\eqref{eq:intro-dual-bspde}.
	Then $f$ admits a version such that, $\Q$-almost surely,
	for every $R<\infty$,
	\begin{equation}
		f\in C\bigl([0,\theta];H^{n_*-2}(B_R)\bigr)
		\hookrightarrow C\bigl([0,\theta];C(\overline B_R)\bigr),
		\label{eq:local-time-continuity}
	\end{equation}
\end{proposition}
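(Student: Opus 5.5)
We derive the continuity from the weak BSPDE identity \eqref{eq:weak-bspde} and the integrability \eqref{eq:solution-norm}. We use a localized version of the standard "Hilbert triple" continuity argument, and we lose two derivatives. The Sobolev embedding $H^{n_*-2}(B_R)\hookrightarrow C(\overline B_R)$ holds because $n_*-2>d/2$, so the second inclusion in \eqref{eq:local-time-continuity} is automatic. The task is therefore to produce a version of $f$ continuous in time with values in $H^{n_*-2}(B_R)$.

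\emph{Setup.} Fix $R$ and a cutoff $\chi\in C_c^\infty(B_{R+1})$ with $\chi=1$ on $B_R$. Set $w_s:=\chi f_s$. On $B_{R+1}$ the weight $\br{x}^{-2\ell}$ is bounded below. Hence \eqref{eq:solution-norm} gives
\[
\E^\Q\Big[\esssup_{s\le\theta}\|w_s\|_{H^{n_*}}^2+\int_0^\theta\|\chi g_s\|_{H^{n_*}}^2\dd s\Big]<\infty .
\]
Testing \eqref{eq:weak-bspde} with $\chi\varphi$ shows that, in the distributional sense,
\[
w_s=\chi\gamma+\int_s^\theta \chi\bigl(L_rf_r+h(\cdot,u_r)^\top g_r\bigr)\dd r-\int_s^\theta \chi g_r^\top\dd Y_r .
\]
On $B_{R+1}$ the coefficients $b,a,h$ and their derivatives up to order $n_*-2$ are bounded by $C_R(1+|u_r|^2)$. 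This uses Assumption~\ref{ass:bspde-regularity} and \eqref{eq:filtering-growth}; the quadratic factor comes from $a=\sigma\sigma^\top$. Consequently
\[
\|\chi(L_rf_r+h^\top g_r)\|_{H^{n_*-2}}\le C_R(1+|u_r|^2)\bigl(\|\chi' f_r\|_{H^{n_*}}+\|\chi' g_r\|_{H^{n_*-2}}\bigr),
\]
where $\chi'$ is a slightly larger cutoff. Since $\int_0^\theta(1+|u_r|^2)\dd r\le T+k$, the drift integral is a Bochner integral in $H^{n_*-2}$ that is finite almost surely. Indeed, $\int(1+|u|^2)\|g\|\dd r\le(\int(1+|u|^2)^2\dd r)^{1/2}(\int\|g\|^2\dd r)^{1/2}$, and if the fourth-power integrability is not available we instead use $\sup_r\|\chi' f_r\|\int(1+|u|^2)\dd r$ for the $f$-terms and $(1+|u|)\|g\|$ for the $h$-term. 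Its indefinite integral is therefore continuous in $H^{n_*-2}$. Likewise, $\chi g\in L^2(0,\theta;H^{n_*})$ almost surely, so the Hilbert-valued stochastic integral $\int_0^{\cdot}\chi g_r^\top\dd Y_r$ has a continuous version in $H^{n_*-2}$.

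\emph{Conclusion.} Both sides of the displayed identity are tested against the countable dense family $\mathcal D$ of test functions. This gives equality in $H^{-\infty}$ for all $s$ simultaneously, off one null set, and for almost every $s$ the left side is $\chi f_s\in H^{n_*}$. The right side is a continuous $H^{n_*-2}(B_{R+1})$-valued process, and it defines the desired version of $f$ on $B_R$. Taking $R\in\N$ and matching the versions on overlaps via $\mathcal D$ gives a single version valid for all $R$, off one null set.

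\emph{Main obstacle.} The delicate point is that \eqref{eq:weak-bspde} is only assumed for each fixed $t$. We must pass to a statement holding for all $t$ at once, and identify the continuous process with $f$ rather than with another representative. We handle this by working with rational $t$ and countable $\mathcal D$, and then using the continuity of the right-hand side. A second concern is the unbounded factor $(1+|u_r|^2)$ in the drift; the stopped energy bound $\int_0^\theta|u_r|^2\dd r\le k$ should make it integrable pathwise, as described above.
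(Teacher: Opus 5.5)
Your proposal is correct and follows the paper's argument. You bound the drift locally in $H^{n_*-2}$ by $C_R(1+|u_s|^2)\|f_s\|_{H^{n_*}}+C_R(1+|u_s|)\|g_s\|_{H^{n_*-2}}$ and integrate it pathwise using $\int_0^\theta|u_s|^2\dd s\le k$; your second splitting of the $f$- and $g$-terms is the right one, since no $\int|u|^4$ is available. You then take the Hilbert-valued stochastic integral as a continuous process and patch over $R\in\N$, and your cutoff $\chi$ merely replaces the paper's direct work in $H^{n_*-2}(B_R)$.

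One small correction: testing at rational times against a countable $\mathcal D$ does not give equality for all $s$ off one null set. It only shows that the continuous right-hand side agrees with $\chi f_{s\wedge\theta}$ almost surely for each fixed $s$, and for a.e.\ $s$ simultaneously. That modification is, however, exactly what the statement asks for.
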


\begin{proof}
	Fix $R<\infty$. By the coefficient assumptions and Leibniz's rule,
	\begin{align*}
		\|L_sf_s\|_{H^{n_*-2}(B_R)}
		&\le C_R(1+|u_s|^2)\|f_s\|_{H^{n_*}(B_{R})},\\
		\|h(\cdot,u_s)^\top g_s\|_{H^{n_*-2}(B_R)}
		&\le C_R(1+|u_s|)\|g_s\|_{H^{n_*-2}(B_{R})}.
	\end{align*}
	Together with \eqref{eq:solution-norm}, Cauchy--Schwarz, and
	$\int_0^\theta |u_s|^2\dd s\le k$, these estimates imply
	\begin{equation}\label{xp}
	\int_0^\theta
	\left(
	\|L_sf_s\|_{H^{n_*-2}(B_R)}
	+\|h(\cdot,u_s)^\top g_s\|_{H^{n_*-2}(B_R)}
	\right)\dd s<\infty
	\qquad\Q\text{-a.s.}
	\end{equation}
	Moreover, \eqref{eq:solution-norm} gives
	\begin{equation}\label{xp1}
	\E^\Q\int_0^\theta
	\|g_s\|_{H^{n_*-2}(B_R;\R^m)}^2\dd s<\infty.
	\end{equation}
By \eqref{xp} and \eqref{xp1},	the drift integral has continuous paths in $H^{n_*-2}(B_R)$,
	so does the Hilbert-valued stochastic integral
	\citep[Chapter~4]{DaPratoZabczyk2014}.
	Hence, the weak identity defines
	an $H^{n_*-2}(B_R)$-continuous modification of $f$.

Choose the modifications consistently for $R\in\mathbb N$, agreement
almost everywhere and time continuity imply indistinguishability on
overlaps. Finally,
$n_*-2>d/2$ and the Sobolev embedding give
\eqref{eq:local-time-continuity}.

\end{proof}

\subsection{Boundedness and uniqueness}

\begin{proposition}[Boundedness of the first component]
\label{prop:f-bound}
Let $f$ be the continuous version in
Proposition~\ref{prop:diagonal-Y}. There is an event $\Omega_0$ with
$\Q(\Omega_0)=1$ such that
\begin{equation}
 |f_s(\omega,x)|\le\|\gamma\|_\infty
 \quad\text{for every }\omega\in\Omega_0,
 \ 0\le s\le\theta(\omega),\ x\in\R^d.
 \label{eq:f-bound-common-version}
\end{equation}
\end{proposition}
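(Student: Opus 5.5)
The bound will be transferred from the parameterized BSDE to the Eulerian field through the inverse flow, using only countably many parameters and then spatial continuity. Proposition~\ref{prop:uniform-parameter}, applied with $t=0$, provides a version of $F^{0,\cdot}$ that is jointly continuous in $(s,y)$. It also gives a null set $N_1$ outside of which
\[
|F^{0,y}_s|\le\|\gamma\|_\infty\quad\text{for all }(s,y)\in[0,T]\times\R^d .
\]
I first check that the continuous version of $f$ in Proposition~\ref{prop:diagonal-Y} is built from this same version $F^{0,\cdot}$. Its proof uses a Kolmogorov modification of $y\mapsto(F^{0,y}_r)_{r\le T}$ with values in $C([0,T])$. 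Two continuous modifications of the same random field are indistinguishable, so that version coincides, outside one null set $N_2$, with the one from Proposition~\ref{prop:uniform-parameter}.

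Next let $N_3$ be the null set off which $(\Phi_s)$ is a flow of homeomorphisms (indeed $C^{n_*}$-diffeomorphisms, Lemma~\ref{98}) depending continuously on $s$, so that $\Psi_s=\Phi_s^{-1}$ is defined for every $s$. Set $\Omega_0:=(N_1\cup N_2\cup N_3)^c$. For $\omega\in\Omega_0$, $s\le\theta(\omega)$ and $x\in\R^d$, put $y=\Psi_s(\omega,x)$. Then
\[
f_s(\omega,x)=F^{0,y}_s(\omega),
\]
and the uniform bound gives $|f_s(\omega,x)|\le\|\gamma\|_\infty$. No limiting argument is needed, because the bound already holds simultaneously in all $(s,y)$ on $N_1^c$.

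As a cross-check, or as a fallback if the identification of versions is delicate, I would use the diagonal identity \eqref{eq:diagonal-Y}. Together with \eqref{eq:F-representation} it gives $|f_{s\wedge\theta}(x)|\le\|\gamma\|_\infty$ a.s. for each deterministic rational $s$ and each $x$ in a countable dense set. The term with $s>\theta$ is bounded by $\|\gamma\|_\infty$ trivially. Joint continuity of $f$ in $(s,x)$ then extends the bound to all $0\le s\le\theta$ and all $x$ on a single full-measure event; the endpoint $s=\theta$ is covered because the continuous path $s\mapsto f_{s\wedge\theta}$ is reached as a limit.

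The main obstacle is bookkeeping rather than analysis. The "continuous version" of $f$ must be the same object in every statement, and the exceptional sets must not depend on $(s,x)$. The fallback route avoids the first concern entirely, since it uses only continuity of the given version of $f$ and countably many pointwise identities. I would therefore write the proof along that route and mention the composition route as a remark.
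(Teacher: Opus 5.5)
Your proposal is correct. The composition route you present first is the paper's proof. The paper fixes the jointly continuous version of $F^{0,\cdot}$ used in Proposition~\ref{prop:diagonal-Y}. It gets $|F^{0,y}_s|\le\|\gamma\|_\infty$ at rational $s$ and $y\in\Q^d$ from \eqref{eq:F-representation}, and extends this to all $(s,y)$ by continuity on one event. It then intersects with the event where the inverse flow and \eqref{eq:reconstruct-limit} hold, and reads off $|f_s(x)|=|F^{0,\Psi_s(x)}_s|\le\|\gamma\|_\infty$. The only differences are minor. You quote the simultaneous bound from the last clause of Proposition~\ref{prop:uniform-parameter} instead of rederiving it. You also make explicit the indistinguishability of the two continuous modifications, which the paper handles simply by saying it uses the same version.

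The fallback you say you would actually write is a genuinely different route, and it is also valid. It runs the countable-dense-set argument in Eulerian coordinates $(s,x)$ on $f$ itself, using \eqref{eq:diagonal-Y} and the bound \eqref{eq:F-representation} for the BSDEs $F^{s,x}$ started at time $s$. The paper instead works in Lagrangian coordinates $(s,y)$ on the single family $F^{0,\cdot}$.
\begin{itemize}
\item What the fallback buys: it never touches the inverse flow or the question of which version of $F^{0,\cdot}$ underlies $f$. It needs only the joint continuity of the given version of $f$. Your treatment of the endpoint $s=\theta$, via continuity of $s\mapsto f_{s\wedge\theta}$, is right.
\item What it costs: it depends on \eqref{eq:diagonal-Y}, whose proof already rests on the random-parameter substitution \eqref{eq:decoupling-direct} and the future-filtration construction. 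The paper's route uses only the definition \eqref{eq:reconstruct-limit} and the time-zero family.
\end{itemize}
Since all these ingredients are established earlier in the paper, either write-up is acceptable. The paper's version uses less of the preceding machinery.
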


\begin{proof}
For each deterministic $y$, \eqref{eq:F-representation} gives
$|F_s^{0,y}|\le\|\gamma\|_\infty$ almost surely for every
deterministic $s$. Choose the jointly continuous version of
$F^{0,\cdot}$ used in Proposition~\ref{prop:diagonal-Y}.
Taking a countable intersection over rational $s\in[0,T]$
and $y\in\Q^d$, and then using continuity, gives a single event
of probability one on which
\[
 |F_s^{0,y}|\le\|\gamma\|_\infty
 \qquad\text{for every }s\in[0,T],\ y\in\R^d.
\]
Intersect this event with the probability-one event on which
the inverse flow and the reconstruction identity hold.
Then \eqref{eq:reconstruct-limit} yields, simultaneously for
all $s\le\theta$ and $x\in\R^d$,
\[
 |f_s(x)|=|F_s^{0,\Psi_s(x)}|\le\|\gamma\|_\infty.
\]
This proves \eqref{eq:f-bound-common-version}.
\end{proof}

\begin{proposition}[Uniqueness in the natural bounded observation-adapted class]
\label{prop:bspde-uniqueness}
Suppose Assumptions~\ref{ass:filtering} and
\ref{ass:bspde-regularity} hold.
Fix $k\in\N$, an $\mathbb F^Y$-stopping time $\theta\le\tau_k$,
and $\gamma\in\mathscr G_\theta^{n_*+1}$.
Then \eqref{eq:intro-dual-bspde} has at most one weighted
Sobolev solution $(f,g)$ whose first component $f$ belongs to
$L^\infty(\Omega\times[0,\theta]\times\R^d)$.
\end{proposition}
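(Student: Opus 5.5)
By linearity, it suffices to show that a weighted Sobolev solution $(\delta f,\delta g)$ with zero terminal data and bounded first component vanishes; here $(\delta f,\delta g)$ is the difference of two solutions with the same $\gamma$. The plan is to compose $\delta f$ with the forward flow $X^{t,x}$, read the result as a solution of the parameterized BSDE \eqref{eq:untruncated-BSDE} with terminal value $0$, and then invoke the uniqueness in Proposition~\ref{prop:density-stability}.

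\textbf{Regularity setup.} Fix a deterministic $t$. Proposition~\ref{prop:local-time-continuity} provides a version of $\delta f$ that is continuous in time with values in $H^{n_*-2}_{\mathrm{loc}}$; by Sobolev embedding it is also jointly continuous in $(s,x)$. The weak identity \eqref{eq:weak-bspde} can be written as a semimartingale identity with $U=\delta f$, $A=-(L\delta f+h^\top\delta g)$, $\tilde A=\delta g$ and $C=0$. I then verify $\mathrm{(IW)}_{n_*}$ from \eqref{eq:solution-norm}, using the Leibniz bounds from the proof of Proposition~\ref{prop:local-time-continuity}. Let $\kappa_j$ be the first time $\int_0^s(1+|u_r|^2)\|\delta f_r\|_{H^{n_*}(B_M)}\dd r$ or a similar functional exceeds $j$; these times localize the drift in $L^1$. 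The order count matters here: $L\delta f$ only lies in $H^{n_*-2}$, and $\tilde A\in H^{n_*-1}$ requires $\delta g\in H^{n_*}$, which \eqref{eq:solution-norm} gives.

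\textbf{Forward composition.} Lemma~\ref{lem:flow-composition}(iii) then gives, for a.e.\ $x$ and simultaneously for all $s\in[t,\theta]$,
\[
\dd\,\delta f_s(X^{t,x}_s)=-h(X_s^{t,x},u_s)^\top\delta g_s(X^{t,x}_s)\dd s+\delta g_s(X_s^{t,x})^\top\dd Y_s+(\sigma^\top\nabla\delta f_s)(X_s^{t,x})^\top\dd W_s .
\]
The terms $L_s\delta f_s$ cancel, and the terminal value is $\delta f_\theta(X_\theta^{t,x})=0$. Beyond $\theta$ the triple is extended by constants, which is consistent with $\chi^t$. So $(\delta f(X^{t,x}),\delta g(X^{t,x}),\sigma^\top\nabla\delta f(X^{t,x}))$ satisfies \eqref{eq:untruncated-BSDE} with $\Gamma=0$, and its first component is bounded by $\|\delta f\|_{L^\infty}$. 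A bounded first component does not by itself make it a true martingale, but the uniqueness argument in Proposition~\ref{prop:density-stability} needs only that: under $\widehat\Q^{t,x}$ the process is a bounded local martingale driven by $(\widehat Y^{t,x},W)$, hence a true martingale with terminal value zero, so it vanishes. The integrability hypothesis $\mathbb H^2$ can be supplied by Lemma~\ref{lem:localized-BSDE-energy} using \eqref{eq:H-square-integrable}, exactly as in the decoupling argument after \eqref{eq:decoupling-direct}; alternatively I can bypass it, since only the local martingale property is used.

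\textbf{Conclusion.} The previous step gives $\delta f_s(X_s^{t,x})=0$ for all $s$, a.s., for a.e.\ $x$; in particular $\delta f_t(x)=0$ at $s=t$ (on $\{t\le\theta\}$) for a.e.\ $x$, and joint continuity upgrades this to all $x$. Taking rational $t$ and using time continuity gives $\delta f\equiv0$ on $[0,\theta]$. Then \eqref{eq:weak-bspde} reduces to $\int_{t\wedge\theta}^\theta\ip{\delta g_s}{\varphi}^\top\dd Y_s=0$ for all $t$. The quadratic variation gives $\int_0^\theta|\ip{\delta g_s}{\varphi}|^2\dd s=0$, and a countable dense family of $\varphi$ yields $\delta g=0$ $\dd\Q\otimes\dd s\otimes\dd x$-a.e.

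\textbf{Main obstacle.} The hardest step is justifying the forward It\^o--Wentzell composition for a general solution, which has only the integrability in \eqref{eq:solution-norm} and not the parameter-derived structure. Two points need care. First, the localization $\kappa_j$ must really give $\mathrm{(IW)}_{n_*}$, since the $L^1$-in-time drift bound uses $(1+|u|^2)$ times an esssup; the time-$L^1$ condition is satisfied with Cauchy--Schwarz against $\int(1+|u|^2)\le T+k$. Second, the statement "a.e.\ $x$" must be passed to every $x$ via continuity. If (iii) turns out to require more than $\delta g\in H^{n_*-1}$, I would fall back on mollifying $\delta f$ in space and passing to the limit using the weighted bounds \eqref{eq:weighted-sup-2}-type estimates for general solutions; these follow from \eqref{eq:solution-norm} exactly as in Proposition~\ref{prop:weighted-pointwise}.
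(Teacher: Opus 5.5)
Your proposal is correct and follows the paper's proof for the first component. You compose the difference with $X^{t,x}$ via the forward formula of Lemma~\ref{lem:flow-composition}(iii), let the $L_s$ terms cancel, remove the drift under $\widehat\Q^{t,x}$, and use that a bounded local martingale with zero terminal value vanishes; continuity then takes you from a.e.\ $x$ and rational $t$ to all $(t,x)$.

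The only deviation is how you recover $\delta g=0$. You read it off the quadratic variation in \eqref{eq:weak-bspde} once $\delta f\equiv0$. Note that the drift $\int_{t\wedge\theta}^\theta\ip{h(\cdot,u_s)^\top\delta g_s}{\varphi}\dd s$ does not disappear there, but it has finite variation, so the $\dd Y$-integral is a continuous finite-variation local martingale and still has zero quadratic variation. The paper instead takes $t=0$ in the characteristic BSDE, gets $\delta g_s(\Phi_s(x))=0$ from its vanishing quadratic variation, and changes variables with the Jacobian. Both routes are valid; yours avoids the change of variables.
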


\begin{proof}
Let $\bar f=f^1-f^2$ and $\bar g=g^1-g^2$, taking the
locally space--time continuous versions of $f^1,f^2$ provided
by Proposition~\ref{prop:local-time-continuity}. Their essential
boundedness and continuity imply a common bound at all
$(s,x)\in[0,\theta]\times\R^d$ outside one $\Q$-null event.

Fix $t\in[0,T]$. On $A_t$, for $t\le s\le\theta$, define
\begin{equation}
 \begin{aligned}
 \bar F_s^x&:=\bar f_s(X_s^{t,x}),\qquad
 \bar G_s^x:=\bar g_s(X_s^{t,x}),\\
 \overline{\widetilde G}_s^x
 &:=\sigma(X_s^{t,x},u_s)^\top
 \nabla\bar f_s(X_s^{t,x}).
 \end{aligned}
 \label{eq:uniqueness-characteristic-fields}
\end{equation}
Extend $\one_{A_t}\bar F^x$ by zero after $\theta$, and the
two integrands by zero outside the active interval.
By the forward formula \eqref{eq:forward-flow-IW} in
Lemma~\ref{lem:flow-composition}, for Lebesgue-a.e. $x$,
\begin{equation}
 \dd\bar F_s^x
 =-(H_s^{t,x})^\top\bar G_s^x\dd s
 +(\bar G_s^x)^\top\dd Y_s
 +(\overline{\widetilde G}_s^x)^\top\dd W_s,
 \qquad\bar F_\theta^x=0
 \quad\text{on }A_t.
 \label{eq:zero-terminal-flow-BSDE}
\end{equation}
The hypotheses of the localized formula follow from the
solution norm exactly as in
Proposition~\ref{prop:local-time-continuity}, with
$A=-L_s\bar f_s-h(\cdot,u_s)^\top\bar g_s$,
$B=\bar g$, and $C=0$.

We only need local square integrability of the characteristic
integrands. Each continuous trajectory $X^{t,x}$ lies in
some random compact ball on $[t,\theta]$. Local Sobolev
embedding and the solution norms imply, almost surely for
every integer $R$,
\[
 \int_0^\theta\sup_{z\in B_R}|\bar g_s(z)|^2\dd s<\infty,
 \qquad
 \esssup_{s\le\theta}\sup_{z\in B_R}
 |\nabla\bar f_s(z)|^2<\infty.
\]
Together with the linear growth of $\sigma$ and
$\int_0^\theta|u_s|^2\dd s\le k$, this proves that the
two stochastic integrands in
\eqref{eq:zero-terminal-flow-BSDE} have finite quadratic
variation after localization. The same is true under the
equivalent measure $\widehat\Q^{t,x}$.

Under this measure,
\[
 \widehat Y_s^{t,x}
 :=Y_s-\int_0^sH_a^{t,x}\dd a
\]
and $W$ are Brownian motions, so the drift in
\eqref{eq:zero-terminal-flow-BSDE} disappears. Consequently,
$\one_{A_t}\bar F^x$, with its zero extension after $\theta$,
is a bounded continuous local martingale on $[t,T]$ with
terminal value zero. It is therefore a true martingale and
identically zero. Its quadratic variation also vanishes.
By equivalence of the two measures, we obtain
\begin{equation}
 \begin{aligned}
 &\one_{A_t}\bar F_s^x=0
 \quad(t\le s\le\theta),\qquad\Q\text{-a.s.},\\
 &\one_{A_t}\bar G^x=0,\qquad
 \one_{A_t}\overline{\widetilde G}^{x}=0,
 \quad\dd\Q\otimes\dd s\text{-a.e.},
 \end{aligned}
 \label{eq:characteristic-zero}
\end{equation}
for Lebesgue-a.e. $x$.

Since $X_t^{t,x}=x$ on $A_t$ and
$\bar f_{t\wedge\theta}=\bar f_\theta=0$ on $A_t^c$,
\begin{equation}
 \E^\Q\int_{B_R}|\bar f_{t\wedge\theta}(x)|^2\dd x=0.
 \label{eq:integrated-f-zero}
\end{equation}
Taking a countable intersection over rational $t\in[0,T]$
and integer $R\ge1$, and using local space--time continuity,
gives $\bar f=0$ everywhere on $[0,\theta]\times\R^d$
outside one $\Q$-null event.

Taking $t=0$ in \eqref{eq:characteristic-zero} gives
$\bar g_s(\Phi_s(x))=0$ for
$\dd\Q\otimes\dd s\otimes\dd x$-a.e. $(\omega,s,x)$ with
$s\le\theta$. Tonelli's theorem and the change of variables
$y=\Phi_s(x)$ yield
\begin{equation}
 \E^\Q\int_0^\theta\int_{\R^d}
 |\bar g_s(y)|^2J_s(\Psi_s(y))^{-1}\dd y\dd s=0.
 \label{eq:integrated-g-eulerian}
\end{equation}
The Jacobian factor is finite and strictly positive, so
$\bar g=0$ for $\dd\Q\otimes\dd s\otimes\dd x$-a.e.
$(\omega,s,x)$ with $s\le\theta$. This proves uniqueness.
\end{proof}

\begin{proof}[Proof of Theorem \ref{thm:main-bspde}]
Propositions~\ref{prop:density-stability} and
\ref{prop:uniform-parameter} construct the untruncated
characteristic BSDE solution and establish its spatial
regularity. Proposition~\ref{prop:diagonal-Y} gives the
continuous, observation-adapted Eulerian field $f$.
Proposition~\ref{prop:characteristic-equation} proves the
predictability of $g$ and the weak BSPDE identity directly
for these fields. Propositions~\ref{prop:weighted-Sobolev}
and \ref{prop:f-bound} verify the solution norm and boundedness,
and Proposition~\ref{prop:weighted-pointwise} gives
\eqref{eq:weighted-sup-2}. Finally,
Proposition~\ref{prop:local-time-continuity} gives the asserted
local continuity, and
Proposition~\ref{prop:bspde-uniqueness} proves uniqueness.
\end{proof}

\section{Concluding remarks}
\label{sec:conclusion}

We have studied the observation-adapted BSPDE directly under
a Brownian reference measure.   Its coefficients may have the stated spatial growth and may be
degenerate, while positive-order spatial derivatives and the terminal field satisfy the
explicit boundedness assumptions. The solution is unique among weighted Sobolev solutions with
bounded first component,  and  the polynomial weights can be chosen
independently of the control stopping level. 

The results established here will be applied in our forthcoming
work on nonlinear filtering with unbounded random coefficients
\cite{xu}. The BSPDE solution serves as an observation-adapted
backward test field for the stopped Zakai equation.  With suitably chosen random terminal data,
the resulting duality identity yields pathwise uniqueness of
the stopped Zakai equation in the admissible measure class.
This uniqueness is then transferred to the Kushner--Stratonovich
equation by unnormalization and removal of the control-energy
localization.

\end{document}